\newcommand{\noun}[1]{\textsc{#1}}
\providecommand{\tabularnewline}{\\}
\numberwithin{equation}{section}
\numberwithin{figure}{section}
\numberwithin{table}{section}
\theoremstyle{plain}
\newtheorem*{thm*}{\protect\theoremname}
\theoremstyle{plain}
\newtheorem*{prop*}{\protect\propositionname}
\theoremstyle{remark}
\newtheorem*{acknowledgement*}{\protect\acknowledgementname}
\theoremstyle{plain}
\newtheorem{thm}{\protect\theoremname}[section]
\theoremstyle{definition}
\newtheorem{defn}[thm]{\protect\definitionname}
\theoremstyle{remark}
\newtheorem{rem}[thm]{\protect\remarkname}
\theoremstyle{definition}
\newtheorem{example}[thm]{\protect\examplename}
\theoremstyle{plain}
\newtheorem{fact}[thm]{\protect\factname}
\theoremstyle{plain}
\newtheorem{lem}[thm]{\protect\lemmaname}
\theoremstyle{plain}
\newtheorem{prop}[thm]{\protect\propositionname}
\theoremstyle{plain}
\newtheorem{cor}[thm]{\protect\corollaryname}
\setlist[enumerate]{leftmargin=*,widest=0}
\setlist{nosep}
\DeclareMathOperator{\diag}{diag}
\DeclareMathOperator{\Spec}{Spec}
\DeclareMathOperator{\ord}{ord}
\DeclareMathOperator{\rank}{rank}
\DeclareMathOperator{\col}{col}
\newcommand{\one}{\mathbbm{1}}
\newcommand{\At}{\scalebox{0.9}{\ensuremath{\widetilde{A}}}}
\newcommand{\Mod}[1]{\,\left(\textup{mod}\;#1\right)}
\providecommand{\definitionname}{Definition}
\providecommand{\propositionname}{Proposition}
\providecommand{\theoremname}{Theorem}
\theoremstyle{plain}
\theoremstyle{definition}
\theoremstyle{definition}
\newtheorem{rem}[thm]{Remark}
\newtheorem*{rems*}{Remarks}
\newtheorem*{disc*}{Discussion}
\theoremstyle{plain}
\providecommand{\acknowledgementname}{Acknowledgement}
\providecommand{\corollaryname}{Corollary}
\providecommand{\definitionname}{Definition}
\providecommand{\examplename}{Example}
\providecommand{\factname}{Fact}
\providecommand{\lemmaname}{Lemma}
\providecommand{\propositionname}{Proposition}
\providecommand{\remarkname}{Remark}
\providecommand{\theoremname}{Theorem}
\begin{document}
\title[Free flags and powering of HD-expanders]{\vspace*{-1cm}
Free flags over local rings and powering\\
of high dimensional expanders}
\author{Tali Kaufman and Ori Parzanchevski}
\date{\today}
\begin{abstract}
Powering the adjacency matrix of an expander graph results in a better
expander of higher degree. In this paper we seek an analogue operation
for high-dimensional expanders. We show that the naive approach to
powering does not preserve high-dimensional expansion, and define
a new power operation, using geodesic walks on quotients of Bruhat-Tits
buildings. Applying this operation results in high-dimensional expanders
of higher degrees. The crux of the proof is a combinatorial study
of flags of free modules over finite local rings. Their geometry describes
links in the power complex, and showing that they are excellent expanders
implies high dimensional expansion for the power-complex by Garland's
local-to-global technique. As an application, we use our power operation
to obtain new efficient double samplers.
\end{abstract}

\maketitle

\section{Introduction}

A $k$-regular graph is called an \emph{expander }if the nontrivial
eigenvalues of its adjacency matrix are of small magnitude in comparison
with $k$ (the trivial eigenvalues are, by definition, $\pm k$).
The theory of expanders has long been a fruitful meeting point for
combinatorics, algebra, number theory and computer science, see e.g.\ the
surveys \cite{HLW06,Lub12}. In recent years a theory of high-dimensional
(HD) expanders has emerged, and is already seeing applications in
mathematics and computer science, e.g.\ in PCPs \cite{Dinur2017Highdimensionalexpanders},
property testing \cite{Kaufman2014HDTesting}, expansion in finite
groups \cite{Chapman2019CutoffRamanujancomplexes}, quantum computation
\cite{Evra2018RamanujancomplexesGolden}, counting problems in matroids
\cite{Anari2018matroids}, list decoding \cite{Dinur2019listdecoding}
and lattices \cite{Kaufman2018lattices}; we refer the reader to \cite{lubotzky2017high}
for a recent survey.

A major obstacle in the study of HD expanders is that the means available
for constructing simplicial complexes are much scarcer in dimension
greater than one; for example, there are many well understood models
of random graphs, whereas the theory of random complexes is still
in its infancy. In this paper we focus on the operation of \emph{powering}:
given a graph $\mathcal{G}$ with adjacency matrix $A=A_{\mathcal{G}}$,
one can regard the matrix $A^{r}$ as the adjacency matrix of a ``power
graph'' $\mathcal{G}^{r}$. If $\mathcal{G}$ is $k$-regular with
second largest eigenvalue (in absolute value) $\lambda$, then $\mathcal{G}^{r}$
is a $k^{r}$-regular graph with second largest eigenvalue $\lambda^{r}$.
On average, distances between vertices become shorter in the power
graph, and if the original graph was an expander, the power graph
is a better one (namely, the ratio between first and second eigenvalues
improves). Powering of graphs has been used, for example, in the proof
of the PCP Theorem by Dinur \cite{Dinur2007PCPtheoremgap} and in
the Zig-Zag construction \cite{reingold2002entropy}, and various
researchers have independently raised the question whether there exists
a power operation for high dimensional expanders.

Unlike the case of graphs, there are many non-equivalent definitions
for HD expansion (e.g., \cite{friedman1995second,LinialMeshulam2006,Gro10,FGL+11,dotterrer2010coboundary,mukherjee2012cheeger,parzanchevski2012isoperimetric,gundert2016eigenvalues,first2016ramanujan,Parzanchevski2012walks,Parzanchevski2013,Kaufman2017Colorful,Oppenheim2017Localspectral,evra2015systolic}),
reflecting the richness of high-dimensional combinatorics. A precursor
of the modern theory of HD expanders is Garland's seminal paper \cite{Gar73},
which introduces a local-to-global approach: it shows that a $d$-dimensional
simplicial complex, whose one-dimensional links are (very good) expander
graphs, is cohomologically connected in all dimensions between zero
and $d$. The complexes Garland was interested in are quotients of
Bruhat-Tits buildings (see §\ref{subsec:-Buildings-and-complexes}).
The links of these complexes are flag complexes over finite fields,
which are well known to be excellent expanders. Currently, we know
that expansion in links implies several global expansion properties
\cite{cohen2014inverse,gundert2016eigenvalues,Oppenheim2017Localspectral,Kaufman2017Highorderrandom},
and in this paper we define a HD expander as a complex whose links
are good expanders (see Definition \ref{def:HDX}).\medskip{}

It turns out that the most natural approach to powering (which is
described in §\ref{subsec:Spheres-and-natural}) does not preserves
high dimensional expansion, so that a more sophisticated one has to
be taken. We devise a powering method based on the notion of geodesic
paths from \cite{Lubetzky2017RandomWalks}; as an example, if $X$
is a two-dimensional complex, its \emph{geodesic $r$-power} is the
clique complex with the same vertices as $X$, and an edge between
every two vertices which were connected by a monochromatic geodesic
path of length $r$ in $X$. The rigorous definition appears in §\ref{subsec:Geodesic-powering},
after we recall the definitions of simplicial complexes (§\ref{subsec:Simplicial-complexes})
and buildings (§\ref{subsec:-Buildings-and-complexes}). An explicit
example of a geodesic $r$-power Cayley complex appears in §\ref{subsec:Explicit-example}.\medskip{}

The central part of the paper is §\ref{subsec:Spectrum-of-links},
which shows that when applied to $\At$\emph{-complexes}, namely,
quotients of affine Bruhat-Tits buildings of type $\At$, our geodesic
power operation yields HD expanders. Let us explain where this expansion
comes from: Each link in an $\At$-complex is a spherical building
over a finite field, and its cells correspond to flags in a finite
vector space. The one-dimensional links are either complete bipartite
graphs, or the projective plane $\mathbb{P}^{2}\mathbb{F}_{q}$, which
is the incidence graph of lines and planes in $\mathbb{F}_{q}^{3}$.
It is a classic exercise that the adjacency spectrum of $\mathbb{P}^{2}\mathbb{F}_{q}$
is $\{\pm(q+1),\pm\sqrt{q}\}$, making it an excellent expander, as
$\frac{\sqrt{q}}{q+1}$ can be made arbitrarily small. This observation
is the basis for Garland's work, and our main task is to conduct a
parallel study for links in our power-complex. In Proposition \ref{prop:link-power}
we show that these links correspond to flags of free submodules of
a fixed free module over a finite local ring $R$, such as $R=\mathbb{Z}/p^{r}\mathbb{Z}$.
We call the complex which arises in this manner the \emph{free projective
space over $R$} (see Definition \ref{def:free_proj}), and we point
out that this object may be of independent interest, outside the realm
of buildings and HD expanders. Our main achievement is a complete
analysis of the spectrum of one-dimensional links:
\begin{thm*}[Main theorem]
For $d\geq2$, the links of $(d-2)$-cells in the geodesic $r$-power
of an $\At$-complex of dimension $d$ are either complete bipartite
graphs, or expanders with spectrum 
\[
\left\{ \pm\left(q+1\right)q^{r-1},\pm\sqrt{q^{2r-1}},\pm\sqrt{q^{2r-2}},\ldots,\pm\sqrt{q^{r+1}},\pm\sqrt{q^{r}}\right\} .
\]
Consequently, geodesic powers of $\At$-complexes are high-dimensional
expanders.
\end{thm*}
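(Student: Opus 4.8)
The plan is to reduce the whole statement to a single spectral computation --- the spectrum of one bipartite graph attached to a finite local ring --- and then feed the outcome into Garland's local-to-global machinery. Let $R$ be the finite local ring supplied by Proposition~\ref{prop:link-power}; write $\mathbb{F}_{q}$ for its residue field and $\mathfrak{m}$ for its maximal ideal, with $\mathfrak{m}^{r}=0\neq\mathfrak{m}^{r-1}$ and a generator $\varpi$ of $\mathfrak{m}$ (so $R$ is $\mathbb{Z}/p^{r}\mathbb{Z}$, or $\mathbb{F}_{q}[t]/(t^{r})$, or more generally a finite chain ring). By Proposition~\ref{prop:link-power}, a link of a $(d-2)$-cell in the geodesic $r$-power of an $\At$-complex is either a complete bipartite graph --- which needs no work, being a perfect bipartite expander (all nontrivial eigenvalues vanish) --- or, by Definition~\ref{def:free_proj}, it is the bipartite incidence graph $X_{R}$ whose two parts are the free rank-$1$ summands $\mathcal{L}$ (``lines'') and the free rank-$2$ summands $\mathcal{P}$ (``planes'') of $R^{3}$, with adjacency $L\subseteq P$. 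So the theorem follows once we (i) compute $\Spec(X_{R})$ and (ii) observe that this forces high-dimensional expansion of the power complex.

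For (i), let $N$ be the $\mathcal{L}\times\mathcal{P}$ biadjacency matrix; since $X_{R}$ is bipartite, $\Spec(X_{R})=\{\pm\sqrt{\mu}:\mu\in\Spec(NN^{T})\}$ together with a zero of multiplicity $|\mathcal{L}|+|\mathcal{P}|-2\rank N$. The entry $(NN^{T})_{L,L'}$ counts the planes through both $L$ and $L'$. The first step is a relative-position classification: $\mathrm{GL}_{3}(R)$ acts transitively on $\mathcal{L}$, and after normalising $L=Re_{1}$ its stabiliser has exactly $r+1$ orbits on $\mathcal{L}$, indexed by $a(L,L')\in\{0,1,\dots,r\}$, the largest $a$ for which $L$ and $L'$ have the same image in $\mathbb{P}^{2}(R/\mathfrak{m}^{a})$ --- the representative being $L'=R(e_{1}+\varpi^{a}e_{2})$ (with $a=r$ meaning $L=L'$, and $a=0$ meaning the reductions mod $\mathfrak{m}$ already differ). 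The second step is a direct count over $R$: reducing to lines in $R^{3}/Re_{1}\cong R^{2}$ that contain $\varpi^{a}\bar{e}_{2}$, one finds $(NN^{T})_{L,L'}=q^{a(L,L')}$ for $L\neq L'$, while the diagonal is the common degree $(q+1)q^{r-1}=|\mathbb{P}^{1}(R)|$. This count is the crux of the argument, and the main obstacle: over a local ring two distinct lines may span a rank-$1$-plus-torsion submodule rather than a rank-$2$ one, so the field intuition ``two points determine a unique line'' fails and the number of common planes genuinely varies with $a$.

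The third step, diagonalisation, is then forced. Since ``$a(L,L')\geq i$'' says exactly that $L$ and $L'$ have equal image in $\mathbb{P}^{2}(R/\mathfrak{m}^{i})$ --- an equivalence relation --- its $0/1$-matrix $B_{i}$ is block-constant along the fibres of the reduction $\mathcal{L}=\mathbb{P}^{2}(R)\twoheadrightarrow\mathbb{P}^{2}(R/\mathfrak{m}^{i})$, the $B_{i}$ commute, and all of them together with $J$ and $I$ are simultaneously diagonalised by the flag $W_{0}\subset W_{1}\subset\dots\subset W_{r}=\ell^{2}(\mathcal{L})$, where $W_{i}$ is the pullback of $\ell^{2}(\mathbb{P}^{2}(R/\mathfrak{m}^{i}))$ and $W_{0}$ is the constants. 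Using $q^{a}=1+\sum_{i=1}^{a}(q^{i}-q^{i-1})$ to express $NN^{T}$ through $J$, $I$ and the $B_{i}$, one reads the eigenvalue off each graded piece: on $W_{0}$ it is $\bigl((q+1)q^{r-1}\bigr)^{2}$, and on $W_{j}\ominus W_{j-1}$ for $1\leq j\leq r$ it is $q^{2r-j}$; every such piece is nonzero because $|\mathbb{P}^{2}(R/\mathfrak{m}^{i})|=(q^{2}+q+1)q^{2(i-1)}$ is strictly increasing in $i\geq1$. Since $|\mathcal{L}|=|\mathcal{P}|$ (duality between rank-$1$ and rank-$2$ summands of $R^{3}$) and $NN^{T}$ is positive definite, $N$ is invertible and there is no extra zero, whence $\Spec(X_{R})=\bigl\{\pm(q+1)q^{r-1}\bigr\}\cup\bigl\{\pm\sqrt{q^{2r-j}}:1\leq j\leq r\bigr\}$, which is precisely the stated set $\{\pm(q+1)q^{r-1},\pm\sqrt{q^{2r-1}},\dots,\pm\sqrt{q^{r}}\}$ (for $r=1$ this recovers the classical spectrum $\{\pm(q+1),\pm\sqrt{q}\}$ of $\mathbb{P}^{2}\mathbb{F}_{q}$).

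For (ii), the ratio of the second-largest to the top eigenvalue of $X_{R}$ is $\sqrt{q^{2r-1}}/\bigl((q+1)q^{r-1}\bigr)=\sqrt{q}/(q+1)$, the same bound as for $\mathbb{P}^{2}\mathbb{F}_{q}$ and independent of $r$; so every link of a $(d-2)$-cell in the geodesic $r$-power is a connected $\lambda$-spectral expander with $\lambda=\sqrt{q}/(q+1)$, which is as small as desired for $q$ large. Plugging this one-dimensional expansion into Garland's local-to-global technique (Oppenheim's trickling-down, in the form underlying Definition~\ref{def:HDX}) upgrades it to spectral expansion of all links of the power complex, so the geodesic power is a high-dimensional expander. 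The only delicate point is the common-plane count of the second paragraph; once $(NN^{T})_{L,L'}=q^{a(L,L')}$ is in hand, the nested-equivalence-relation structure does the rest mechanically.
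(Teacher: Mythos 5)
Your reduction and final output are correct, and the overall skeleton --- reduce to $\mathbb{P}^2_{\mathrm{fr}}(\mathcal{O}_r)$ via Proposition~\ref{prop:link-power}, parameterize relative position by a number in $\{0,\dots,r\}$, count common planes, diagonalize, then invoke Oppenheim's local-to-global --- matches the paper's. But your step (iii) takes a genuinely different and markedly cleaner route through the hard part (diagonalization), so the comparison is worth spelling out. The paper parameterizes relative position by the ultrametric $\Delta(v,w)$ coming from the rooted tree of $r$-geodesics inside $\mathcal{B}$, and then diagonalizes $Q=\mathcal{A}^2|_{\mathcal{F}^1_r}$ by a brute-force matrix induction: it proves that $B^{(\ell)}=\prod_{j=r}^{r+\ell-1}(Q-q^j)$ has constant $(v,w)$-entries whenever $\Delta(v,w)\leq\ell$, which requires an explicit table of the ``intersection numbers'' $N^{\delta}_{\varepsilon,\zeta}$ of the association scheme and a rather unpleasant telescoping computation --- ultimately showing that the polynomial $\prod_{j=r}^{2r-1}(x-q^j)$ annihilates $Q$ on the orthogonal complement of the constants. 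You instead parameterize relative position by the $\mathfrak{m}$-adic invariant $a(L,L')$ (the two are trivially related by $a=r-\Delta$), observe that the $0/1$-matrices $B_i$ of the nested equivalence relations ``same image in $\mathbb{P}^2(R/\mathfrak{m}^i)$'' are scalar multiples of the orthogonal projections $P_{W_i}$ onto the pullback flag $W_0\subset W_1\subset\dots\subset W_r$ (indeed $B_i=q^{2(r-i)}P_{W_i}$, since every fibre of $\mathcal{L}\twoheadrightarrow\mathbb{P}^2(R/\mathfrak{m}^i)$ has $q^{2(r-i)}$ elements), write $NN^T=J+\sum_{i=1}^{r-1}(q^i-q^{i-1})B_i+q^rI$, and read the eigenvalue $q^{2r-j}$ off each graded piece $W_j\ominus W_{j-1}$ by a one-line telescoping sum. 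This replaces the paper's longest and most technical stretch with a transparent ``nested projections'' argument, and it also makes the role of the $\mathfrak{m}$-adic filtration explicit (which in turn makes the isospectrality of Corollary~\ref{cor:isospect} almost tautological). The one point you state without full justification is the common-plane count $(NN^T)_{L,L'}=q^{a(L,L')}$, but your reduction to lines in $R^3/Re_1\cong R^2$ does give the right count (it amounts to counting surjections $R/\mathfrak{m}^a\oplus R\twoheadrightarrow R$ with $e_3\mapsto 1$, which is $|\mathfrak{m}^{r-a}|=q^a$), so this is a gap in exposition rather than in substance.
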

The computation of the spectrum of these links is carried out in Theorem
\ref{thm:spec-r-links}, whose proof is long and technically challenging,
in comparison with the elegance of the final result. This, together
with the fact that they arise in a natural algebraic settings, suggest
that a broader geometric theory of free flags over finite rings could
perhaps be developed to give a more conceptual proof. Another interesting
corollary of Theorem \ref{thm:spec-r-links} is an isospectrality
result (Corollary \ref{cor:isospect}) for free projective planes
over local rings with the same residue order (or equivalently, of
links of geodesic powers of $\At$-complexes of the same densities).\medskip{}

Having established local HD expansion in §\ref{subsec:Spectrum-of-links},
we turn in §\ref{subsec:Expansion-between-vertices-and-geod} to demonstrating
expansion between vertices in the power-complex (Theorem \ref{thm:spec-geodesics}),
and between vertices and geodesics (Proposition \ref{prop:vert-vs-geod})
-- these are required for the applications which we present in §\ref{sec:Applications-to-sampling}.
We stress that while the notion of geodesic walks comes from \cite{Lubetzky2017RandomWalks},
the expansion types studied here and there are different, and no result
from that paper is used in this one.\medskip{}

A special family of $\At$-complexes which appears in the second half
of the paper are \emph{Ramanujan complexes}: Ramanujan graphs, which
were defined in \cite{LPS88}, are $k$-regular graphs whose nontrivial
eigenvalues belong to the $L^{2}$-spectrum of the $k$-regular tree.
As regular trees are one dimensional buildings, Ramanujan complexes
were defined in \cite{li2004ramanujan,Lubotzky2005a} to be $\At$-complexes
whose spectral theory mimics that of the $\At$-building (see Definition
in §\ref{subsec:-Buildings-and-complexes}). It turns out, however,
that in dimension two and above \emph{all }quotients of buildings
have some expansion properties (by Garland or by Kazhdan's property
(T)); while in contrast, \emph{every} regular graph is a quotient
of a one dimensional building. Inspection reveals that many results
on Ramanujan complexes actually apply to general quotients of HD buildings,
e.g.\ \cite{FGL+11,evra2014mixing,evra2015finite,kaufman2016isoperimetric,Golubev2013triangle}
(some results which use the full power of the Ramanujan property appear
in \cite{Evra2018RamanujancomplexesGolden,Chapman2019CutoffRamanujancomplexes,Lubetzky2017RandomWalks}).
In this paper, the results of section §\ref{subsec:Spectrum-of-links}
apply to all $\At$-complexes, but in §\ref{subsec:Expansion-between-vertices-and-geod}
we restrict ourselves to Ramanujan complexes in order to obtain a
stronger result. 

While we focus in this paper on $\At$-complexes, our power operation
makes sense for any colored complex (see Definition \ref{def-colored-clique-complexes}),
and it is plausible that it yields HD expanders from other ones as
well: natural candidates are spherical buildings \cite{lubotzky2014expansion},
the random ones constructed in \cite{Lubotzky2015RandomLatinsquares,Lubotzky2018RandomSteinersystems},
and the ones constructed in \cite{Kaufman2018Construction}. 

\subsection{\label{subsec:Spheres-and-natural}Spheres and natural powering}

When considering clique complexes, a natural power operation comes
to mind: taking the clique complex afforded by the $r$-power of the
one-skeleton of the original complex. In effect, this is not so simple,
since the $r$-power of a graph is a ``multigraph'' with multiple
edges and loops, and it is not clear how to define the clique complex
in this case. Indeed, in the power graph $\mathcal{G}^{r}$ two vertices
are neighbors if there is a path of length $r$ between them in $\mathcal{G}$,
and backtracking paths always give rise to loops. One can replace
$\mathcal{G}^{r}$ by the ``non-backtracking $r$-power'' $\mathcal{G}^{[r]}$,
in which two vertices are neighbors if there is a \emph{non-backtracking}
$r$-path between them in $\mathcal{G}$. The expansion quality of
$\mathcal{G}^{[r]}$ is even better than that of $\mathcal{G}^{r}$
\cite{Alon2007Nonbacktrackingrandom}, and if $\mathrm{girth}(\mathcal{G})>r$
then $\mathcal{G}^{[r]}$ is a simple graph (a graph with no multiedges
and loops). However, for HD expanders this is still not useful, since
the girth of the one-skeleton is only three, and more edges of $\mathcal{G}^{r}$
should be removed to obtain a simple graph. For $\mathcal{G}$ of
high girth, the vertices of $\mathcal{G}^{[r]}$ correspond to the
$r$-sphere in $\mathcal{G}$, which leads us to observe the $r$-sphere
in our complex as a candidate for a ``natural powering'' process.
In §\ref{sec:Expansion-on-spheres} we show the following:
\begin{prop*}[Prop.\ \ref{prop:1-power-r-sphere} and \ref{prop:r-power-r-sphere}]
The $r$-spheres around a vertex in a two-dimensional $\At$-complex,
and even the $r$-powers (as a graph) of the $r$-spheres, do not
form a family of expanders.
\end{prop*}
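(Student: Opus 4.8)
The plan is to \emph{coarsen} the sphere $S_{r}(v)$, and its $r$-th power, onto the corresponding objects inside a single apartment, where they become explicit circulant graphs, and then to argue that the coarsening is faithful enough to transport the (non‑)expansion upstairs. Fix an apartment $A\ni v$; its $1$-skeleton is the triangular ($A_{2}$‑)lattice, and an elementary computation there identifies the induced subgraph on the vertices at distance exactly $r$ from $v$ with the cycle $\Sigma:=S_{r}^{A}(v)\cong C_{6r}$. Let $\rho$ be the retraction of the building onto $A$ based at a chamber containing $v$; it is a type‑preserving simplicial map preserving distances from $v$, hence restricts to a surjective graph homomorphism $\rho\colon S_{r}(v)\twoheadrightarrow C_{6r}$. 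Because $\rho$ is type‑preserving and the complex is properly $3$‑coloured, every fibre $\rho^{-1}(a)$ is monochromatic, hence an independent set, so $\rho$ collapses no edge: $S_{r}(v)$ is a \emph{blow-up} of $C_{6r}$, its vertices splitting into $6r$ independent blocks $V_{a}=\rho^{-1}(a)$ ($a\in\mathbb{Z}/6r\mathbb{Z}$) with edges only between cyclically consecutive blocks. The same map sends the $r$-th power $S_{r}(v)^{r}$ -- the simple graph in which $x\sim y$ iff a walk of length $r$ in $S_{r}(v)$ joins them -- onto the circulant $\widehat{\Sigma}$ on $\mathbb{Z}/6r\mathbb{Z}$ in which $a\sim b$ iff a length-$r$ walk in $C_{6r}$ joins them, i.e.\ iff $0<\operatorname{dist}_{C_{6r}}(a,b)\le r$ and $\operatorname{dist}_{C_{6r}}(a,b)\equiv r\pmod{2}$.

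From this picture the negative statements follow from three observations, two of which are completely soft. First, $\rho$ is $1$-Lipschitz and onto, so the diameter of $S_{r}(v)$ is at least $\operatorname{diam}C_{6r}=3r$; together with the crude growth bound $|S_{r}(v)|\le q^{O(r)}$ and the standard diameter/spectral-gap inequality this forces $\lambda_{2}(S_{r}(v))/\lambda_{1}(S_{r}(v))\ge q^{-O(1)}$ for all $r\ge2$, so this ratio does not tend to $0$ and the spheres are not a family of (good) expanders. Second, when $r$ is even, bipartiteness of $C_{6r}$ makes every length-$r$ walk return to the parity class of its starting position, so $\widehat{\Sigma}$ joins only equal-parity vertices and is \emph{disconnected} into two isomorphic circulants; since a graph surjecting onto a disconnected graph is itself disconnected, $S_{r}(v)^{r}$ is disconnected for every even $r$ -- the worst possible expansion. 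Third, when $r$ is odd, $\widehat{\Sigma}$ is a connected bipartite circulant of degree $r+1$ whose spectrum one reads off by one line of Fourier analysis on $\mathbb{Z}/6r\mathbb{Z}$: its ratio $\lambda_{2}/\lambda_{1}$ is at least $\tfrac{2}{r+1}\sum_{j\ \mathrm{odd},\ 1\le j\le r}\cos\tfrac{2\pi j}{6r}\to\tfrac{3\sqrt{3}}{2\pi}>0$, so it too fails to improve towards $0$.

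To upgrade the first and third observations to the sharp statements -- $\lambda_{2}(S_{r}(v))/\lambda_{1}(S_{r}(v))\to1$ as $r\to\infty$, and $\lambda_{2}(S_{r}(v)^{r})/\lambda_{1}(S_{r}(v)^{r})$ bounded away from $0$ -- I would test the normalised Laplacian Rayleigh quotients of the pulled-back low-frequency functions $x\mapsto\cos\tfrac{2\pi\rho(x)}{6r}$ and $x\mapsto\sin\tfrac{2\pi\rho(x)}{6r}$: because $\rho$ maps $1$-Lipschitzly into a $6r$-cycle, each changes by $O(1/r)$ across any edge of $S_{r}(v)$, so the smaller of the two Rayleigh quotients is $O(1/r^{2})$, pinning $\lambda_{2}(S_{r}(v))$ within a factor $1-O(1/r^{2})$ of $\lambda_{1}$; the analogous computation on $S_{r}(v)^{r}$ reproduces, up to bounded factors, the Rayleigh quotient of the corresponding mode on $\widehat{\Sigma}$. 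The one non-formal ingredient here, and the genuine content of the section, is a \emph{regularity} property of the blow-up: the block weights $\operatorname{vol}(V_{a})$ should be invariant under the order-$6$ symmetry $a\mapsto a+r$ of $C_{6r}$ (induced by an automorphism of the building fixing $v$, e.g.\ a diagram automorphism of type $A_{2}$ composed with a Weyl rotation by $120^{\circ}$), which makes the pulled-back modes orthogonal to the constants and controls their norms; and the edge-bundle sizes $|E(V_{a},V_{b})|$ should depend, up to bounded factors, only on the relative position of $a$ and $b$, which is what makes the pulled-back Rayleigh quotient comparable to its value downstairs. Proving these regularity statements calls for a hands-on analysis of the retraction's fibres in terms of chains of lattices and galleries in the building, and I expect this to be the main technical obstacle; the soft arguments above, however, already establish that neither the $r$-spheres nor their $r$-th powers form a family of expanders.
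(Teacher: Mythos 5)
The retraction picture you set up -- $\rho$ collapsing $S_r(v)$ onto the apartment $6r$-cycle as a blow-up -- is a valid viewpoint and is exactly what underlies the paper's use of the Iwahori decomposition $G=\bigsqcup_{t}BtK$, $X_{a,b,c}=Btv_0$. But the ``regularity'' hypothesis you single out as the crux is not merely unproved, it is \emph{false}, and it fails at the level of leading order. A Weyl-length computation (recorded in the paper) gives $|\rho^{-1}(a)|=|X_{a,b,c}|=q^{2r}$ when $a\ge b\ge c$, dropping to $q^{2r-1}$, $q^{2r-2}$, $q^{2r-3}$ as $(a,b,c)$ moves around the hexagon; opposite sides carry fibres differing by a factor $q^{3}$, and a single side already contains a $1-O(1/q)$ fraction of all vertices. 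The $60^{\circ}$ rotation of the apartment \emph{is} induced by a building automorphism fixing $v$ (a Weyl element composed with the opposition involution), but that automorphism stabilizes no chamber through $v$, hence does not commute with the chamber-based retraction $\rho$; it conjugates $\rho$ to a \emph{different} retraction with different fibres, so it imposes no relation among the $|\rho^{-1}(a)|$. (Also note a diagram automorphism composed with a $120^{\circ}$ Weyl rotation is a reflection of order $2$, not an element of order $6$.) Consequently the pulled-back $\cos(2\pi\rho(\cdot)/6r)$ and $\sin(2\pi\rho(\cdot)/6r)$ are not even approximately orthogonal to the Perron eigenvector, and the error you would need to control is $O(1)$, not $O(1/r^{2})$.

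The paper resolves this by localizing to a single heavy side of the hexagon. For the first proposition it exhibits an explicit cut of conductance $<1/r$ passing through the two heavy sides. For the second it supports the test function $f=\sin(2\pi j/r)$ only on the fibres $X_{r,j,0}$, $0\le j\le r$, which all have equal size $q^{2r}$, with every interior vertex of degree $2q$ having exactly $q$ neighbours in each of the two adjacent fibres; this computes $\langle Mf,f\rangle/\langle f,f\rangle=\cos(2\pi/r)$ on the nose. Orthogonality to the Perron eigenvector is arranged via the order-two involution $g\mapsto w(g^{t})^{-1}w$, which \emph{does} stabilize the retraction data (it normalizes $B$ and $T$, so permutes the $X_{a,b,c}$, swapping $X_{r,j,0}\leftrightarrow X_{r,r-j,0}$), and under which $f$ is odd. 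Two smaller gaps in your argument: the soft diameter bound yields only $\lambda_{2}/\lambda_{1}\ge q^{-O(1)}$, which is entirely consistent with being a family of expanders and so does not prove the first claim on its own; and the disconnectedness of $S_r(v)^{r}$ for even $r$ (a correct and pleasingly soft observation: a graph homomorphism onto a disconnected graph forces disconnectedness) covers only half the cases and does not directly address the spectral quantity $\lambda_{(r)}^{r}\to1$ that the paper actually establishes.
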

The consequence is that any powering scheme in which the resulting
links are similar to the $r$-spheres, or to the $r$-powers of the
$r$-spheres in the original graphs, does not give a family of HD
expanders from $\At$-complexes (and in particular, from Ramanujan
complexes). This proposition also relates to a conjecture of Benjamini,
which states that there are no expander families in which the spheres
of any radius form a family of expanders themselves. Being excellent
local and global expanders, Ramanujan complexes are natural candidates
for disproving Benjamini's conjecture, but we show that they do not
violate it.

\subsection{Applications}

In §\ref{sec:Applications-to-sampling} we demonstrate applications
of the geodesic power operation of HD expanders. It is well known
that walks on expander graphs sample the vertices well, and a natural
question is whether longer walks along expanders can sample well short
walks. We use the power complex to design a long walk which samples
well short walks along geodesics. In this walk, two geodesics of length
$r$ are considered neighbors if they border a common triangle in
the geodesic $r$-power of the complex (see Figure \ref{fig:3-walk}).
The fact that the power complex is a HD expander (§\ref{subsec:Spectrum-of-links})
implies that this walk samples well the geodesics (Corollary \ref{cor:sample-long-walk}).

\begin{figure}[h]
\includegraphics[scale=0.3]{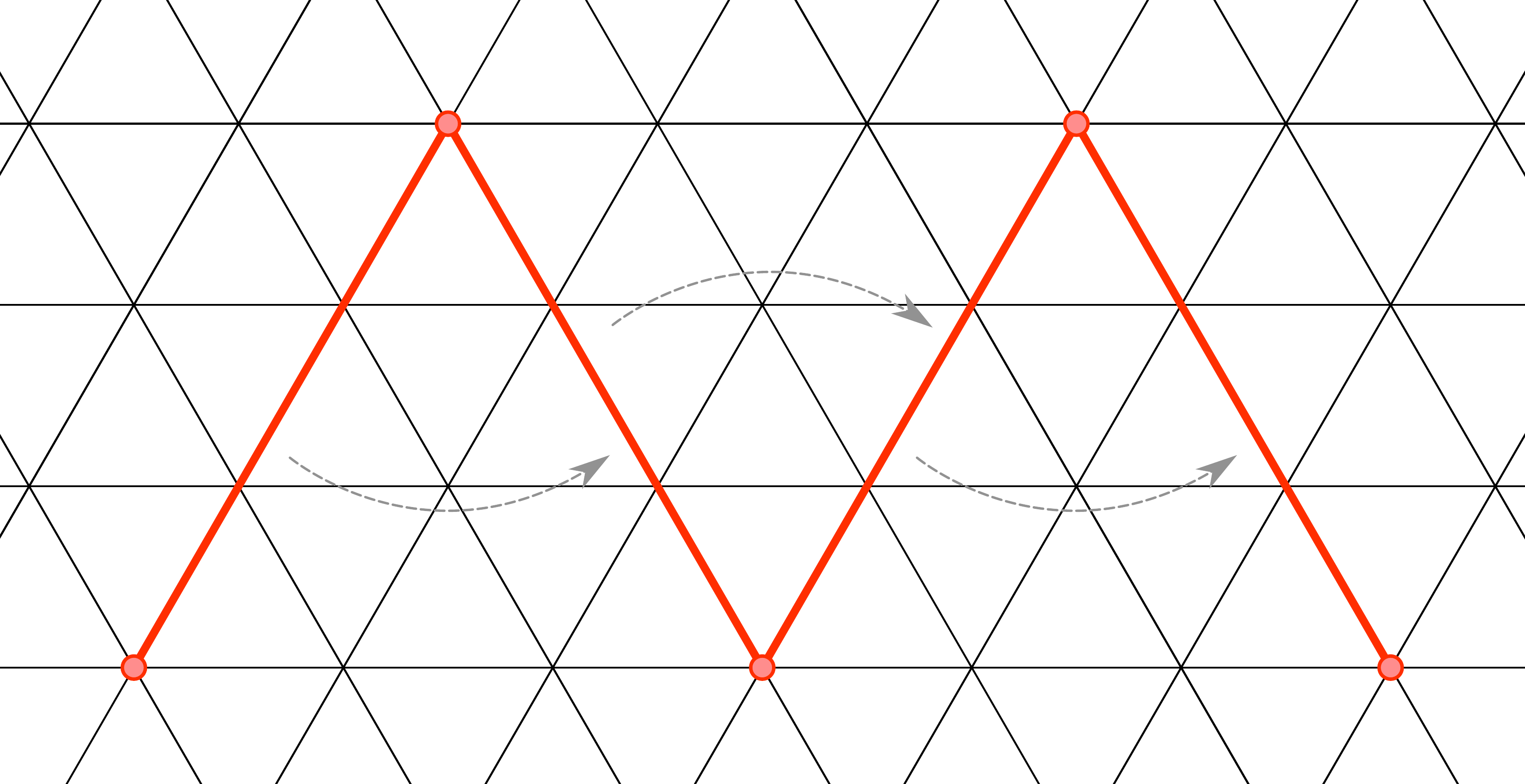}\caption{\label{fig:3-walk}Four steps of the $3$-walk on $3$-geodesics in
an $\widetilde{A}_{2}$-complex.}
\end{figure}

Combining this with the results of §\ref{subsec:Expansion-between-vertices-and-geod}
yields an application for computer science: an explicit construction
of a double sampler, as defined by Dinur and Kaufman for their work
on de-randomization of direct product testing \cite{Dinur2017Highdimensionalexpanders}.
The advantage of our construction over that of \cite{Dinur2017Highdimensionalexpanders}
is that the power operation allows us to obtain arbitrary sampling
quality for a fixed complex, in the same way that taking longer walks
along an expander graph improves the sampling quality in a classical
sampler. In contrast, the quality of the double sampler which appears
in \cite{Dinur2017Highdimensionalexpanders} is determined from the
underlying complex, and a new complex has to be generated each time
one seeks to obtain finer sampling quality.
\begin{acknowledgement*}
The authors thank David Kazhdan for helpful discussions, and the anonymous
referees for various improvements of the paper. Tali Kaufman was supported
by ERC and BSF grants; Ori Parzanchevski was supported by ISF grant
1031/17.
\end{acknowledgement*}

\section{\label{sec:Definitions}Definitions}

\subsection{\label{subsec:Simplicial-complexes}Simplicial complexes}

A simplicial complex $X$ with vertex set $V$ is a collection of
subsets of $V$, called faces or cells, which is closed under containment.
We denote by $X(j)$ the cells of size $j+1$, which are called $j$-dimensional
(or $j$-cells), and by $d=\dim X$ the maximal dimension of a cell;
$X$ is called \emph{pure} if every cell is contained in a $d$-cell.
The \emph{link} of a cell $\tau\in X$, denoted $X_{\tau}$, is the
complex obtained by taking all cells in $X$ that contain $\tau$
and removing $\tau$ from them. Thus, if $\dim\tau=i$ then $\dim X_{\tau}=d-i-1$,
and in particular $X_{\emptyset}=X$, and $X_{\tau}$ is a graph for
$\tau\in X(d-2)$. For any $\tau\in X(i)$ ($-1\leq i\leq d-2$),
the \emph{one-skeleton} of $X_{\tau}$ is the graph obtained by taking
only the vertices and edges in $X_{\tau}$, and the second largest
eigenvalue of the normalized adjacency operator of this graph is denoted
by $\mu_{\tau}$. If $X$ is maximal with respect to its underlying
graph, i.e.\ every clique in the one-skeleton of $X$ is a cell,
$X$ is called a \emph{clique complex}.
\begin{defn}[High dimensional expander]
\label{def:HDX}A pure $d$-dimensional complex $X$ is a \emph{$\lambda$-high
dimensional expander} if for every $-1\leq i\leq d-2$, and for every
$\tau\in X(i)$, $\mu_{\tau}\leq\lambda$. In fact, by \cite{Oppenheim2017Localspectral}
it follows that if $X$ is connected and $\mu_{\tau}\leq\lambda<\frac{1}{d}$
for every $\tau\in X(d-2)$, then $X$ is already a $\frac{\lambda}{1-\lambda d}$-high
dimensional expander.
\end{defn}

\subsection{\label{subsec:-Buildings-and-complexes}$\widetilde{A}$-complexes}

We recall the definition of Ramanujan complexes, and more generally
$\At$-complexes, following \cite{li2004ramanujan,Lubotzky2005a}
(for further study of these complexes see \cite{FGL+11,Lubotzky2013,kaufman2016isoperimetric,first2016ramanujan,Golubev2013triangle,Lubetzky2017RandomWalks}).
Let $F$ be a fixed non-archimedean local field with integer ring
$\mathcal{O}$, uniformiser $\pi$ and residue field $\mathcal{O}/\pi\mathcal{O}\cong\mathbb{F}_{q}$
(for example, $(F,\mathcal{O},\pi,q)=(\mathbb{Q}_{p},\mathbb{Z}_{p},p,p)$
with $p$ prime or $(\mathbb{F}_{q}((t)),\mathbb{F}_{q}[[t]],t,q)$
with $q$ a prime power). The \emph{affine Bruhat-Tits building} $\mathcal{B}=\mathcal{B}_{d}(F)$
\emph{of type $\At_{d-1}$} is an infinite $(d-1)$-dimensional clique
complex, whose vertices correspond to the cosets $\nicefrac{G}{K}$,
where $G=PGL_{d}(F)$ and $K=PGL_{d}(\mathcal{O})$. The map $gK\mapsto g\mathcal{O}^{d}$
constitutes a correspondence between $G/K$ and homothety-classes
of $\mathcal{O}$-lattices in $F^{d}$. Two such classes are neighbors
in $\mathcal{B}$ when they have representatives $L,L'$ satisfying
$\pi L<L'<L$, and $\mathcal{B}$ is the clique complex of the resulting
graph. An \emph{$\At_{d-1}$-complex} is, by definition, a quotient
of $\mathcal{B}$ by a cocompact lattice $\Gamma\leq G$ acting on
$\mathcal{B}$ without fixed points\footnote{By a theorem of Tits, if $\Gamma$ is torsion-free then this condition
is always satisfied.}, and we say that it is \emph{of density $q$}. The $\At_{d-1}$-complex
$X=\Gamma\backslash\mathcal{B}$ is a finite simplicial complex with
fundamental group $\Gamma$, and it is a $\frac{1}{\sqrt{q}}$-HD
expander by \cite{Gar73}. The vertices of $\mathcal{B}$ are colored
by
\[
\col\colon\mathcal{B}(0)\rightarrow\nicefrac{\mathbb{Z}}{d\mathbb{Z}},\quad\col\left(gK\right)=\ord_{q}\det g+d\mathbb{Z},
\]
and we say that \emph{$X$ is $d$-partite }if $\Gamma$ preserves
vertex-colors, namely, $\col$ factors through $X(0)=\Gamma\backslash\mathcal{B}(0)$.
The directed edges of $\mathcal{B}$, denoted $\mathcal{B}^{\pm}(1)$,
are also colored: 
\[
\col\colon\mathcal{B}^{\pm}(1)\rightarrow\left(\nicefrac{\mathbb{Z}}{d\mathbb{Z}}\right)^{\times},\quad\col\left(v\rightarrow w\right)=\col\left(w\right)-\col\left(v\right),
\]
and $\Gamma$ always preserves the color of edges, so that any $\At_{d-1}$-complex
$X$ inherits a coloring of its directed edges, $\col\colon X^{\pm}(1)\rightarrow\left(\nicefrac{\mathbb{Z}}{d\mathbb{Z}}\right)^{\times}$.
We give this property a name:
\begin{defn}[Colored complexes]
\label{def-colored-clique-complexes}A \emph{$d$-colored complex}
is a pure $\left(d-1\right)$-dimensional complex whose directed edges
are colored by $\left(\nicefrac{\mathbb{Z}}{d\mathbb{Z}}\right)^{\times}$,
so that each $(d-1)$-cell $\tau$ can be assigned vertex colors $\col_{\tau}:\tau\rightarrow\nicefrac{\mathbb{Z}}{d\mathbb{Z}}$
for which $\col(v\rightarrow w)=\col_{\tau}w-\col_{\tau}v$ for any
$v,w\in\tau$. We stress that $\col_{\tau}$ and $\col_{\tau'}$ need
not agree on $\tau\cap\tau'$, so $X$ need not be vertex-colored.
\end{defn}

We define the colored adjacency operator $A_{j}$ on $L^{2}\left(X(0)\right)$
for a $d$-colored complex $X$: 
\begin{equation}
\left(A_{j}f\right)\left(v\right)=\sum\nolimits _{{w\sim v\atop \col\left(v\rightarrow w\right)=j}}f\left(w\right).\label{eq:col-adj}
\end{equation}
In the case of $\mathcal{B}_{d}$ and its quotients, $A_{j}$ is regular
of degree  $\left[\begin{smallmatrix}d\\
j
\end{smallmatrix}\right]_{q}$, namely $\#\{w\,|\,{w\sim v\atop \col\left(v\rightarrow w\right)=j}\}=\left[\begin{smallmatrix}d\\
j
\end{smallmatrix}\right]_{q}$ for every vertex $v$. Here $\left[\begin{smallmatrix}d\\
j
\end{smallmatrix}\right]_{q}$ is the Gaussian binomial coefficient, and eigenvalues of $A_{j}$
of this magnitude (which account for periodicity, by Perron-Frobenius
theory) are said to be \emph{trivial}.
\begin{defn}[\cite{li2004ramanujan,Lubotzky2005a}]
An $\At_{d-1}$-complex $X$ is a \emph{Ramanujan complex} if for
$0<j<d$ every eigenvalue of $A_{j}$ is either trivial or contained
in the spectrum of $A_{j}$ acting on $L^{2}\left(\mathcal{B}_{d}(0)\right)$.
\end{defn}

\begin{rem}
Subsequent works \cite{first2016ramanujan,Lubetzky2017RandomWalks}
suggest stronger definitions for Ramanujan complexes, but for the
case $d=3$, which we use in §\ref{sec:Applications-to-sampling},
it is shown in \cite{kang2010zeta} that the various definitions agree.
\end{rem}

\begin{example}
When $d=3$, $\mathcal{B}_{d}$ is a triangle complex with constant
vertex degree $2\left(q^{2}+q+1\right)$ and edge degree $q+1$. The
degrees of $A_{1}$ and $A_{2}$ are both $q^{2}+q+1$, so that a
trivial eigenvalue satisfies $\left|\lambda\right|=q^{2}+q+1$. If
(and only if) $X$ is Ramanujan, the nontrivial eigenvalues satisfy
\[
\lambda\in\Spec\left(A_{j}\big|_{L^{2}\left(\mathcal{B}_{3}\right)}\right)=\left\{ q\left(z_{1}+z_{2}+z_{3}\right)\,\middle|\,{\left|z_{1}\right|=\left|z_{2}\right|=\left|z_{3}\right|=1\atop z_{1}\cdot z_{2}\cdot z_{3}=1}\right\} ,
\]
and in particular $\left|\lambda\right|\leq3q$.
\end{example}

\subsection{\label{subsec:Geodesic-powering}Geodesic powering}

The power operation which we define is based on geodesic paths in
colored complexes:
\begin{defn}[Geodesic path]
\label{def:geodesic}A sequence of vertices $v_{0},\ldots,v_{r}$
in a $d$-colored complex $X$ is called an \emph{$r$-geodesic path
of color one} if it is:
\begin{enumerate}
\item A non-backtracking path: $\{v_{i},v_{i+1}\}\in X(1)$ and $v_{i+2}\neq v_{i}$.
\item Geodesic: $\{v_{i},v_{i+1},v_{i+2}\}\notin X(2)$.
\item Of color one: $\col(v_{i}\rightarrow v_{i+1})=1$.
\end{enumerate}
Unless stated otherwise, by an \emph{$r$-geodesic} we always mean
an $r$-geodesic path of color one. Note that for $d\geq3$, (1) follows
from (3) since $\col v_{i+2}\neq\col v_{i}$. 
\end{defn}

The inverted path $v_{r},\ldots,v_{0}$ is a ``geodesic of color
$d-1$'', satisfying (1), (2), and $\col(v_{i}\rightarrow v_{i+1})=d-1$.
Geodesic paths of other colors can also be defined, but their geometric
intuition is less obvious and originates from higher-dimensional geometry\footnote{For an edge $v\rightarrow w$ of color $j$, a step in a geodesic
walk of color $j$ can be performed by completing it to a $j$-geodesic
path of color one $v=v_{0},v_{1},\ldots,v_{j}=w$ which is also a
$j$-cell, performing $j$ steps of the $j$-dimensional flow described
in \cite{Lubetzky2017RandomWalks,Chapman2019CutoffRamanujancomplexes}
on this (ordered) cell, and keeping the first and last vertex. }.

The geometric motivation for this definition is the following: when
walking on graphs, every edge gives rise to a ``trivial'' local
loop of length two, and the non-backtracking walk eliminates these
loops. In complexes of higher dimension, even the non-backtracking
walk has local loops formed by going around a triangle or a higher-dimensional
cell. The (monochromatic) geodesic walks avoid such loops -- indeed,
on the building itself there are no closed geodesic paths at all,
just as there are no closed non-backtracking paths on a tree (this
follows from being ``collision-free'' -- see \cite{Lubetzky2017RandomWalks}).

\begin{example}
\label{exa:standard-geod}With $F,\mathcal{O},\pi,q$ and $\mathcal{B}=\mathcal{B}_{d}(F)$
as in §\ref{subsec:-Buildings-and-complexes}, observe the vertices
$v_{i}=\diag\left(\pi^{i},1,\ldots,1\right)K$ in $\mathcal{B}\left(0\right)$
(which correspond to the homothety classes of the $\mathcal{O}$-lattices
$L_{i}=\pi^{i}\mathcal{O}\times\mathcal{O}^{d-1}$, respectively).
The path $v_{0}\rightarrow\ldots\rightarrow v_{r}$ is an $r$-geodesic
in $\mathcal{B},$ as $\pi L_{i}<L_{i+1}<L_{i}$, $\left[L_{i}:L_{i+1}\right]=q$
and no scaling $L'_{i+2}$ of $L_{i+2}$ satisfies $\pi L_{i}<L'_{i+2}<L_{i}$.
\end{example}

We can now define the geodesic power of a colored complex:
\begin{defn}[Geodesic powering]
For a pure $d$-dimensional colored complex $X$, the \emph{geodesic
$r$-power} of $X$ is defined as follows: its vertices are the same
as the vertices of $X$, and $d+1$ distinct vertices $v_{0},\ldots,v_{d}$
form a $d$-cell iff, possibly after reordering them, there is an
$r$-geodesic path (of color one) from each $v_{i}$ to $v_{i+1}$,
and from $v_{d}$ to $v_{0}$. The cells of lower dimension are the
subcells of the $d$-cells so defined.
\end{defn}

We do not assume here that $X$ is a clique complex, but the examples
studied in this paper are. We also remark that the vertices contained
in an edge in the  power-{}complex are only the two endpoints of the
corresponding $r$-geodesic in $X$ -- as in graph powering, the
interior vertices are ``forgotten''.

\subsection{\label{subsec:Explicit-example}Explicit example}

We give an explicit example of an $\At$-complex and its geodesic
$r$-powers, using the construction from \cite{Evra2018RamanujancomplexesGolden}.
Let $p,q$ be distinct primes equal to $1$ modulo $4$, let $p=\pi\overline{\pi}$
be a decomposition of $p$ in $\mathbb{Z}\left[i\right]$, and define
\[
S_{p}=\Bigg\{ s\in M_{3}\left(\mathbb{Z}\left[i\right]\right)\,\Bigg|\,{s^{*}s=pI,\ord_{\pi}\left(\det s\right)=1\atop s\equiv\left(\begin{smallmatrix}1 & * & *\\
* & 1 & *\\
* & * & 1
\end{smallmatrix}\right)\Mod{2+2i}}\Bigg\}.
\]
For example, taking $p=5$ and $\pi=2+i$ we have $S_{5}=\Big\{\left(\begin{smallmatrix}2i-1 & 0 & 0\\
0 & 2i-1 & 0\\
0 & 0 & -2i-1
\end{smallmatrix}\right)$, $\left(\begin{smallmatrix}2i-1 & 0 & 0\\
0 & 1 & 2\\
0 & -2 & 1
\end{smallmatrix}\right)$, $\left(\begin{smallmatrix}1 & 1+i & 1+i\\
1+i & 1 & -1-i\\
1+i & -1-i & 1
\end{smallmatrix}\right)$, $\ldots\Big\}$ (see \cite[Example 6.4]{Evra2018RamanujancomplexesGolden}
for the complete set). The set $S_{p}$ has $p^{2}+p+1$ elements,
and the directed Cayley graph spanned by it in $PGL_{3}(\mathbb{Q}\left[i\right])$
is isomorphic to the graph of color-one edges in the $\At_{2}$-building
$\mathcal{B}=\mathcal{B}_{3}(\mathbb{Q}_{p})$ (the edges of color
two are just the inverse edges). Fixing $\varepsilon=\sqrt{-1}\in\mathbb{F}_{q}$
and mapping $i\mapsto\varepsilon$ gives a ring homomorphism $\eta\colon\mathbb{Z}[i]\rightarrow\mathbb{F}_{q}$,
and we denote by $S_{p,q}$ the set of matrices in $PGL_{3}\left(\mathbb{F}_{q}\right)$
obtained from $S_{p}$ by applying $\eta$. The generated group $G=\left\langle S_{p,q}\right\rangle $
equals either $PSL_{3}\left(\mathbb{F}_{q}\right)$ or $PGL_{3}\left(\mathbb{F}_{q}\right)$,
and the directed Cayley graph $X^{p,q}=Cay\left(G,S_{p,q}\right)$
is the graph of color-one edges in a finite $\At_{2}$-complex of
density $p$.

Returning to $\mathcal{B}\cong Cay(\left\langle S_{p}\right\rangle ,S_{p})$
(where $\left\langle S_{p}\right\rangle \leq PGL_{3}(\mathbb{Q}[i])$),
for any $s,s'\in S_{p}$ either $e\rightarrow s\rightarrow ss'\rightarrow e$
is a triangle in $\mathcal{B}$, or $e\rightarrow s\rightarrow ss'$
is a geodesic. For each $s$ there are $p+1$ choices of $s'$ for
which the former occurs; they are the ones for which $ss's''$ is
a scalar matrix for some $s''\in S_{p}$. Denoting the remaining $p^{2}$
choices of $s'$ by $\Sigma_{s}$, the $r$-geodesics starting at
$e$ are precisely 
\[
e\rightarrow s_{1}\rightarrow s_{1}s_{2}\rightarrow\cdots\rightarrow s_{1}s_{2}\ldots s_{r}\qquad\left(s_{1}\in S_{p},s_{i}\in\Sigma_{s_{i-1}}\text{for }i\geq2\right).
\]
In accordance, the $r$-power of $\mathcal{B}$ coincides with the
Cayley graph with generating set $S_{p}^{(r)}:=\left\{ s_{1}s_{2}\ldots s_{r}\,\middle|\,s_{1}\in S_{p},s_{i}\in\Sigma_{s_{i-1}}\right\} \subseteq PGL_{3}(\mathbb{Q}[i])$,
and the $r$-power of $X^{p,q}$ is the Cayley graph of $G$ with
the generators $\big\{\eta\left(s\right)\,\big|\,s\in S_{p}^{(r)}\big\}$.

\section{Expansion in the power-complex}

\subsection{\label{subsec:Spectrum-of-links}Spectrum of links in the power-complex}

Let $F,\mathcal{O},\pi$ and $\mathbb{F}_{q}=\mathcal{O}/\pi\mathcal{O}$
be as in §\ref{subsec:-Buildings-and-complexes}. The link of a vertex
in $\mathcal{B}=\mathcal{B}_{d}\left(F\right)$ coincides with the
spherical building of $PGL_{d}(\mathbb{F}_{q})$, whose cells correspond
to flags in the space $\mathbb{F}_{q}^{d}$. Our first goal is to
give a similar description for the links in the power-complex of $\mathcal{B}$.
Recall that a module $M$ over a commutative ring $R$ is called \emph{free
}if it is isomorphic to $R^{\oplus m}$ for some $m$, which is denoted
by $\rank M$. The ring we are interested in is
\[
\mathcal{O}_{r}=\mathcal{O}_{r}\left(F\right):=\mathcal{O}/\pi^{r}\mathcal{O},
\]
and we remark that if $F$ is a completion of a global field $k$,
then one can also realize $\mathcal{O}_{r}$ as a quotient of the
integer ring of $k$; E.g., $\mathcal{O}_{r}(\mathbb{Q}_{p})=\mathbb{Z}_{p}/p^{r}\mathbb{Z}_{p}\cong\nicefrac{\mathbb{Z}}{p^{r}\mathbb{Z}}$,
and $\mathcal{O}_{r}(\mathbb{F}_{q}((t)))\cong\mathbb{F}_{q}\left[t\right]/\left(t^{r}\right)$.
We introduce the following definitions:
\begin{defn}
\label{def:free_proj}
\begin{enumerate}
\item A flag of $\mathcal{O}_{r}$-modules $\mathcal{F=}\left\{ 0<M_{1}<\ldots<M_{\ell}<\mathcal{O}_{r}^{d}\right\} $
is called \emph{free} if all $M_{i}$ are free $\mathcal{O}_{r}$-modules.
\item The \emph{free projective $d$-space }over $\mathcal{O}_{r}$, denoted
$\mathbb{P}_{\mathrm{fr}}^{d}(\mathcal{O}_{r})$, is the complex whose
vertices correspond to free $\mathcal{O}_{r}$-submodules $0<M<\mathcal{O}_{r}^{d+1}$,
and whose cells are the free flags in $\mathcal{O}_{r}^{d+1}$.
\end{enumerate}
\end{defn}

We state now a few useful facts which follow from the theory of modules
over local principal ideal rings (for example from the existence of
a Smith Normal Form over $\mathcal{O}_{r}$, see e.g.\ \cite{kaplansky1949elementary}):
\begin{fact}
\label{fact:Submodules-of-Ord}Every submodule $M$ of $\mathcal{O}_{r}^{d}$
is equivalent under $GL_{d}(\mathcal{O}_{r})$ to $\diag\left(\pi^{m_{1}},\ldots,\pi^{m_{d}}\right)\mathcal{O}_{r}^{d}$
for a unique choice of $r\geq m_{1}\geq\ldots\geq m_{d}\geq0$. For
these $m_{i}$, $\mathcal{O}_{r}^{d}/M\cong\diag\left(\pi^{r-m_{d}},\ldots,\pi^{r-m_{1}}\right)\mathcal{O}_{r}^{d}$,
and $M\leq\mathcal{O}_{r}^{d}$ is free iff all $m_{i}$ are either
$0$ or $r$. When $M$ is free, the free submodules in $\mathcal{O}_{r}^{d}/M$
are in correspondence with free submodules in $\mathcal{O}_{r}^{d}$
which contain $M$.
\end{fact}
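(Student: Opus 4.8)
The plan is to deduce everything from the Smith normal form (equivalently, the elementary divisor theorem) for the finite chain ring $\mathcal{O}_{r}=\mathcal{O}/\pi^{r}\mathcal{O}$, reducing all but the last clause to bookkeeping. For the first assertion I would argue via a lift to the discrete valuation ring $\mathcal{O}$: given $M\leq\mathcal{O}_{r}^{d}$, let $\widehat{M}$ be its preimage under the reduction map $\mathcal{O}^{d}\twoheadrightarrow\mathcal{O}_{r}^{d}$, so that $\pi^{r}\mathcal{O}^{d}\leq\widehat{M}\leq\mathcal{O}^{d}$. Since $\mathcal{O}$ is a principal ideal domain, the elementary divisor theorem furnishes a basis $f_{1},\ldots,f_{d}$ of $\mathcal{O}^{d}$ and exponents $0\leq a_{i}\leq r$ (the upper bound because $\widehat{M}\supseteq\pi^{r}\mathcal{O}^{d}$) with $\widehat{M}=\bigoplus_{i}\pi^{a_{i}}\mathcal{O}f_{i}$. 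Reducing modulo $\pi^{r}$ turns $(f_{i})$ into a basis of $\mathcal{O}_{r}^{d}$ and the change-of-basis matrix, which lies in $GL_{d}(\mathcal{O})$, into an element of $GL_{d}(\mathcal{O}_{r})$; after reordering the $a_{i}$ decreasingly as $r\geq m_{1}\geq\cdots\geq m_{d}\geq0$ this yields the equivalence $M\sim\diag(\pi^{m_{1}},\ldots,\pi^{m_{d}})\mathcal{O}_{r}^{d}$. For uniqueness I would observe that in this normal form $M\cong\bigoplus_{i}\mathcal{O}_{r}/\pi^{r-m_{i}}\mathcal{O}_{r}$, so the numbers $\dim_{\mathbb{F}_{q}}\pi^{k}M/\pi^{k+1}M=\#\{i:m_{i}<r-k\}$ for $0\leq k<r$ are intrinsic invariants of $M$ which jointly determine the multiset $\{m_{i}\}$ (alternatively, invoke the uniqueness clause of the structure theorem for finitely generated $\mathcal{O}$-modules).

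The quotient computation is then a rewriting of the normal form: $\mathcal{O}_{r}^{d}/M\cong\bigoplus_{i}\mathcal{O}_{r}/\pi^{m_{i}}\mathcal{O}_{r}\cong\bigoplus_{i}\pi^{r-m_{i}}\mathcal{O}_{r}$, and collecting these cyclic modules as the columns of a diagonal matrix -- after reversing the indexing so that $r-m_{d}\geq\cdots\geq r-m_{1}$ is again non-increasing -- identifies the right-hand side with $\diag(\pi^{r-m_{d}},\ldots,\pi^{r-m_{1}})\mathcal{O}_{r}^{d}$. For the freeness criterion I would note that $M\cong\bigoplus_{i}\mathcal{O}_{r}/\pi^{r-m_{i}}\mathcal{O}_{r}$ has minimal number of generators $k:=\#\{i:m_{i}<r\}$ (Nakayama) and cardinality $q^{\sum_{i}(r-m_{i})}$; if $M$ is free it has rank $k$, so $q^{rk}=|M|=q^{\sum_{i}(r-m_{i})}$, and since there are exactly $k$ summands with $r-m_{i}\geq1$, each of exponent at most $r$, equality forces every such exponent to equal $r$, i.e.\ $m_{i}\in\{0,r\}$ for all $i$; conversely, when all $m_{i}\in\{0,r\}$ the displayed decomposition collapses to $\mathcal{O}_{r}^{k}$. (One could equally read this off uniqueness of the normal form, the free module $\mathcal{O}_{r}^{k}\leq\mathcal{O}_{r}^{d}$ having $m$-vector $(r,\ldots,r,0,\ldots,0)$.)

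For the final clause I would isolate the key lemma that a \emph{free} submodule $M$ of a free $\mathcal{O}_{r}$-module $F$ is a direct summand. Fixing a basis $x_{1},\ldots,x_{m}$ of $M$, the crucial point is that $M\cap\pi F=\pi M$: if $w=\sum_{i}c_{i}x_{i}\in\pi F$ then $\pi^{r-1}w=0$, and freeness of $M$ gives $\pi^{r-1}c_{i}=0$, hence $c_{i}\in\pi\mathcal{O}_{r}$, so $w\in\pi M$. Consequently $M/\pi M$ injects into $F/\pi F$, so the images $\bar{x}_{i}$ are $\mathbb{F}_{q}$-independent; extending them to a basis of $F/\pi F$ and lifting (Nakayama, together with the fact that $n$ generators of $\mathcal{O}_{r}^{n}$ form a basis) produces a basis of $F$ extending $x_{1},\ldots,x_{m}$, so $F=M\oplus F'$ with $F'$ free. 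Granting this, the correspondence theorem already gives an inclusion-preserving bijection between submodules of $\mathcal{O}_{r}^{d}/M$ and submodules of $\mathcal{O}_{r}^{d}$ containing $M$, and it only remains to match free with free: if $N\supseteq M$ is free, the lemma makes $M$ a direct summand of $N$, so $N/M$ is free; conversely if $N\supseteq M$ with $N/M$ free, the sequence $0\to M\to N\to N/M\to0$ splits (the free module $N/M$ being projective), so $N\cong M\oplus N/M$ is free.

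I expect the main obstacle to be this last clause: the bijection of posets of submodules is formal, but matching freeness in \emph{both} directions genuinely uses that $M$ itself is free (not merely that $M$ and the quotient are), and the clean route is through the direct-summand lemma, which is exactly where the chain-ring structure of $\mathcal{O}_{r}$ -- notably that $\pi^{r-1}\neq0$ yet annihilates $\pi\mathcal{O}_{r}^{d}$ -- enters. Everything else is routine once Smith normal form over $\mathcal{O}_{r}$ is available.
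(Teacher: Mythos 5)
Your proof is correct. The paper does not actually prove this Fact; it states it as a consequence of "the theory of modules over local principal ideal rings," citing Kaplansky for the Smith normal form over $\mathcal{O}_{r}$, which is exactly the route you take (lifting to the DVR $\mathcal{O}$, applying elementary divisors to $\pi^{r}\mathcal{O}^{d}\leq\widehat{M}\leq\mathcal{O}^{d}$, and reducing). Your reduction of the quotient and freeness clauses to bookkeeping via the invariants $\dim_{\mathbb{F}_{q}}\pi^{k}M/\pi^{k+1}M$ and the order-and-minimal-generators count is sound. The one substantive addition you make -- and the right one -- is the direct-summand lemma for free submodules of free $\mathcal{O}_{r}$-modules, with the key observation $M\cap\pi F=\pi M$ (proved by annihilating with $\pi^{r-1}$ and using freeness of $M$); this is what makes the final clause more than a formal application of the correspondence theorem, and you correctly flag that \emph{both} directions of the freeness-matching genuinely use that $M$ itself is free. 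Since the paper offers no argument of its own, there is nothing further to contrast; your writeup is a faithful and complete implementation of the approach the paper gestures at.
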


An important consequence is that a maximal free flag $\mathcal{F}=\left\{ M_{i}\right\} $
in $\mathcal{O}_{r}^{d}$ has a unique refinement to a maximal flag,
since each quotient $\nicefrac{M_{i+1}}{M_{i}}$ is isomorphic to
the local ring $\mathcal{O}_{r}$, which has a unique composition
series.
\begin{example}
\label{exa:free-flag}For $F=\mathbb{Q}_{p},r=2,d=3$ we have $\mathcal{O}_{2}\cong\nicefrac{\mathbb{Z}}{p^{2}\mathbb{Z}}$,
and $0\negmedspace<\negmedspace\left(\begin{smallmatrix}*\\
0\\
0
\end{smallmatrix}\right)\negmedspace<\negmedspace\left(\begin{smallmatrix}*\\
*\\
0
\end{smallmatrix}\right)\negmedspace<\negmedspace\left(\nicefrac{\mathbb{Z}}{p^{2}\mathbb{Z}}\right)^{3}$ is a maximal free flag; its maximal refinement is $0\negmedspace<\negmedspace\left(\begin{smallmatrix}p*\\
0\\
0
\end{smallmatrix}\right)\negmedspace<\negmedspace\left(\begin{smallmatrix}*\\
0\\
0
\end{smallmatrix}\right)\negmedspace<\negmedspace\left(\begin{smallmatrix}*\\
p*\\
0
\end{smallmatrix}\right)\negmedspace<\negmedspace\left(\begin{smallmatrix}*\\
*\\
0
\end{smallmatrix}\right)\negmedspace<\negmedspace\left(\begin{smallmatrix}*\\
*\\
p*
\end{smallmatrix}\right)\negmedspace<\negmedspace\left(\nicefrac{\mathbb{Z}}{p^{2}\mathbb{Z}}\right)^{3}$.
\end{example}

To see how $\mathcal{O}_{r}^{d}$ relates to the building $\mathcal{B}$,
let $L_{0}\rightarrow\ldots\rightarrow L_{m}$ be a closed path of
color one in $\mathcal{B}$. By some abuse of notation we use $L_{i}$
to refer to a specific choice of lattice in the homothety class $L_{i}$,
that was chosen so that $\pi L_{i}<L_{i+1}<L_{i}$; it follows from
$\col(L_{i}\rightarrow L_{i+1})=1$ that $[L_{i}:L_{i+1}]=q$. By
$\col L_{0}=\col L_{m}$ one has $m=rd$ for some $r$, and from $L_{m}=\pi^{t}L_{0}$
(as they are homothetic) it follows that $t=r$, so that $L_{m}<L_{m-1}<\ldots<L_{0}$
projects to a maximal flag in $L_{0}/\pi^{r}L_{0}\cong\mathcal{O}_{r}^{d}$.
On the other hand, each maximal flag in $L_{0}/\pi^{r}L_{0}$ lifts
to a distinct path, since if $L_{i+1}\neq L_{i+1}'$ and both are
of index $q$ in $L_{i}$ then they cannot be homothetic. We conclude
that color-one cycles of length $rd$ around $L_{0}$ are in correspondence
with maximal flags in $\mathcal{O}_{r}^{d}$. In addition, as $\mathcal{B}$
is $d$-colored any color-one path $L_{0}\rightarrow\ldots\rightarrow L_{r}$
can be completed to a color-one cycle of length $rd$, so that color-one
paths of length $r$  starting from a given vertex correspond to flags
$M_{r}<\ldots<M_{0}=\mathcal{O}_{r}^{d}$ such that $[M_{i}:M_{i+1}]=q$.
We can now prove two useful Lemmas:
\begin{lem}
\label{lem:trans-geodes}The group $G=PGL_{d}(F)$ acts transitively
on all $r$-geodesics in $\mathcal{B}$.
\end{lem}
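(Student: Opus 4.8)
The plan is to reduce to a local statement using the transitivity of $G=PGL_d(F)$ on the vertices of $\mathcal B$ — equivalently, on homothety classes of $\mathcal O$-lattices in $F^d$ — and then to analyse the geodesics emanating from the fixed vertex $v_0=[\mathcal O^d]$, whose stabiliser in $G$ is $K=PGL_d(\mathcal O)$. After moving the starting vertex of an arbitrary $r$-geodesic to $v_0$, it suffices to prove that $K$ acts transitively on the $r$-geodesics that begin at $v_0$. Given such a geodesic $v_0,\dots,v_r$, I would fix the unique descending lattice representatives $\mathcal O^d=L_0\supsetneq L_1\supsetneq\cdots\supsetneq L_r$ with $\pi L_{i-1}\subsetneq L_i$; since all the edges have colour one, $[L_{i-1}:L_i]=q$.

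The crux is to rephrase the non-backtracking and geodesic conditions (1)--(2) of Definition~\ref{def:geodesic} in module-theoretic terms. Because $\mathcal B$ is a clique complex, $\{v_i,v_{i+1},v_{i+2}\}$ spans a $2$-cell iff $v_i\sim v_{i+2}$; and since $[L_i:L_{i+2}]=q^2<q^d=[L_i:\pi L_i]$, an elementary index computation shows that the only homothetic representative of $v_{i+2}$ that can lie strictly between $\pi L_i$ and $L_i$ is $L_{i+2}$ itself, so $v_i\sim v_{i+2}\iff\pi L_i\subseteq L_{i+2}$. Consequently, being an $r$-geodesic amounts to $\pi L_i\not\subseteq L_{i+2}$ for all $i$ (for $d=2$, where $\mathcal B$ is a tree, this is simply the non-backtracking condition). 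Read inside $L_i/\pi L_i\cong\mathbb F_q^d$ (for $1\le i\le r-1$), the submodule $L_{i+1}$ is a hyperplane, the image of $\pi L_{i-1}$ is a line, and the condition above says precisely that the hyperplane must be complementary to that line. Carrying this along by induction on $i$, I would prove that there is a surjective $\mathcal O$-linear functional $\phi\colon\mathcal O^d\to\mathcal O$ with $L_i=\phi^{-1}(\pi^i\mathcal O)$ for every $0\le i\le r$: the inductive step merely records the ``twist'' of the newly chosen hyperplane as the next digit of $\phi$ modulo $\pi^r$. Conversely, every such $\phi$ produces an $r$-geodesic, namely a $GL_d(\mathcal O)$-translate of the standard one in Example~\ref{exa:standard-geod}.

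Granting this description, the lemma follows immediately. The kernel of a surjective functional $\phi\colon\mathcal O^d\to\mathcal O$ is a corank-$1$ free direct summand of $\mathcal O^d$: the sequence $0\to\ker\phi\to\mathcal O^d\xrightarrow{\ \phi\ }\mathcal O\to0$ splits because $\mathcal O$ is free, and a finitely generated direct summand of a free module over the principal ideal ring $\mathcal O$ is again free. Choosing a basis of $\ker\phi$ together with any $v$ with $\phi(v)=1$ yields a basis of $\mathcal O^d$; an element $k\in GL_d(\mathcal O)$ carrying it to the standard basis satisfies $\phi\circ k^{-1}=e_1^{\ast}$, and hence $k$ maps the given geodesic onto the one of Example~\ref{exa:standard-geod}. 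Since scalar matrices act trivially on homothety classes, this action factors through $K$, which therefore acts transitively on the $r$-geodesics through $v_0$; together with the first reduction, $G$ acts transitively on all $r$-geodesics.

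I expect the main obstacle to be the middle step — carrying out the index bookkeeping in $\mathcal B$ carefully, dealing with the degenerate case $d=2$, and organising the induction so that the functionals $\phi$ (meaningful only modulo $\pi^r$) are glued consistently across the successive levels. Once the reformulation ``$\pi L_i\not\subseteq L_{i+2}$'' is in hand, the rest is routine linear algebra over $\mathcal O$; the same step can also be phrased group-theoretically, as the transitivity of the parabolic subgroup $\mathrm{Stab}_{GL_d(\mathbb F_q)}(\ell)$ on the hyperplanes complementary to a fixed line $\ell$.
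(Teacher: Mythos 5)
Your proof is correct, and it takes a genuinely different route than the paper's. The paper proves the lemma by a direct matrix computation: it reduces by induction to showing that $\bigcap_{i=0}^{r-1}\mathrm{Stab}(L_i')$ acts transitively on the lattices $G_{\vec a}$ that continue the standard geodesic, after explicitly classifying the color-one edges leaving $L_{r-1}'$ into those that do and those that do not preserve the geodesic property. Your approach is more module-theoretic: you reformulate the geodesic condition as ``$\pi L_i \not\subseteq L_{i+2}$,'' show inductively that this forces $\mathcal O^d/L_r$ to be a cyclic $\mathcal O$-module (equivalently, $L_i=\phi^{-1}(\pi^i\mathcal O)$ for a surjective functional $\phi$), and then transitivity is immediate because all split short exact sequences $0\to\ker\phi\to\mathcal O^d\to\mathcal O\to0$ are $GL_d(\mathcal O)$-equivalent. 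This is essentially a direct proof of the implication $(3)\Rightarrow(1)$ of the paper's Lemma~\ref{lem:geodesics} — which the paper instead deduces \emph{from} the present lemma — so your route inverts the logical order of the two results while arriving at the same place. The payoff of your version is that it is coordinate-free and makes the ``cyclic quotient'' picture (which underlies the whole Fact~\ref{fact:Submodules-of-Ord}/Lemma~\ref{lem:geodesics} machinery) visible already at this stage; the cost is that the inductive step you flag as the ``crux'' — showing that $\pi^i v\notin L_{i+1}$ propagates, so that the generator of $\mathcal O^d/L_i$ remains a generator of $\mathcal O^d/L_{i+1}$ — does need to be written out carefully, whereas the paper's matrix computation, while less illuminating, is more mechanically verifiable. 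One small wording point: the line $\overline{\pi L_{i-1}}$ and the hyperplane $\overline{L_{i+1}}$ being ``complementary'' in $L_i/\pi L_i$ should be read simply as the line not lying inside the hyperplane; with that reading your reduction is exactly right.
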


\begin{proof}
We show by induction on $r$ that any $r$-geodesic (of color one)
$L_{0}\dasharrow L_{r}$ can be translated by $G$ to the ``standard''
geodesic $L_{0}'\dasharrow L_{r}'$, where $L_{i}^{'}:=\pi^{i}\mathcal{O}\times\mathcal{O}^{d-1}$
as in Example \ref{exa:standard-geod}. For $r=0$ this holds by transitivity
of $G$ on $\mathcal{B}(0)=G/K$, and for $r=1$ since $K=Stab(\mathcal{O}^{d})$
acts transitively on the edges of color one leaving $\mathcal{O}^{d}$.
For $r\geq2$, we can assume by induction that $L_{i}=L_{i}'$ for
$0\leq i\leq r-1$, and we observe that the color one edges leaving
$L_{r-1}'=\pi^{r-1}\mathcal{O}\times\mathcal{O}^{d-1}$ enter either
\vspace{-1.5ex}
\begin{equation}
G_{\vec{a}}=\left(\begin{matrix}{\scriptscriptstyle \pi^{r}} & {\scriptscriptstyle a_{2}\pi^{r-1}\ \cdots\ a_{d}\pi^{r-1}}\\
0 & I_{d-1}
\end{matrix}\right)\mathcal{O}^{d},\qquad\text{or}\qquad T_{\vec{a}}=\left(\begin{smallmatrix}\pi^{r-1}\\
 & {\displaystyle I_{m}}\\
 &  & \vphantom{A}\pi & a_{m+2}\ \cdots\ a_{d}\\
 &  &  & {\displaystyle I_{d-m-2}}
\end{smallmatrix}\right)\mathcal{O}^{d}\label{eq:Ga_Ta}
\end{equation}
(with all $a_{i}\in\mathcal{O}$, and $0\leq m\leq d-2$). If $d>2$,
the vertices $T_{\vec{a}}$ are also neighbors of $L_{j-1}'$, and
if $d=2$ they coincide with it; in both cases they do not continue
$L_{j-1}'\rightarrow L_{j}'$ to a geodesic. On the other hand, $\bigcap_{i=0}^{r-1}Stab\left(L_{i}'\right)=\left(\begin{smallmatrix}{\scriptscriptstyle \mathcal{O}^{\!\times}} & {\scriptscriptstyle \text{---}\pi^{r-1}\mathcal{O}\text{---}}\\
{\scriptscriptstyle \stackrel[|]{|}{\mathcal{O}}} & GL_{d-1}\left(\mathcal{O}\right)
\end{smallmatrix}\right)$ takes $L_{r}'$ to all $G_{\vec{a}}$, which concludes the proof.
\end{proof}
\begin{lem}
\label{lem:geodesics}For a color-one path $\gamma=L_{0}\dasharrow L_{r}$
with $[L_{i}:L_{i+1}]=q$, the following are equivalent:
\begin{enumerate}
\item $L_{r}$ is a free submodule of $\nicefrac{L_{0}}{\pi^{r}L_{0}}\cong\mathcal{O}_{r}^{d}$.
\item $\gamma$ is the unique color-one path of length $r$ from $L_{0}$
to $L_{r}$.
\item $\gamma$ is geodesic.
\end{enumerate}
\end{lem}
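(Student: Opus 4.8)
The plan is to prove the cycle of implications $(1)\Rightarrow(2)\Rightarrow(3)\Rightarrow(1)$, working throughout inside the quotient $\overline{L}_0:=L_0/\pi^rL_0\cong\mathcal{O}_r^d$, where the path $\gamma$ becomes a chain of submodules $M_r<M_{r-1}<\cdots<M_0=\mathcal{O}_r^d$ with each successive index equal to $q$. First I would record the translation between paths and submodule chains already set up in the paragraph preceding the lemma: a color-one path of length $r$ from $L_0$ is exactly such a chain, and $L_r$ corresponds to $M_r$; the whole path is recoverable from $M_r$ precisely when the intermediate $M_i$ are forced, i.e.\ when there is a unique saturated chain $M_r<\cdots<M_0$ of the given indices, which is condition (2). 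So $(1)\Leftrightarrow(2)$ should follow cleanly from Fact \ref{fact:Submodules-of-Ord}: if $M_r$ is free then, again by Fact \ref{fact:Submodules-of-Ord}, $\mathcal{O}_r^d/M_r$ is free of rank $d$ over $\mathcal{O}_r$, hence is itself isomorphic to $\mathcal{O}_r^{\,d}$ with $|\mathcal{O}_r^d/M_r|=q^r$ forcing $M_r=\pi^r(\text{something})$... more precisely $M_r\cong\mathcal{O}_r^{d-1}\oplus\pi^r\mathcal{O}\cdot$... I would instead argue: $M_r$ free of index $q^r$ in $\mathcal{O}_r^d$ means, by the Smith form in Fact \ref{fact:Submodules-of-Ord}, that $M_r$ is $GL_d(\mathcal{O}_r)$-equivalent to $\pi^r\mathcal{O}\times\mathcal{O}_r^{d-1}$, whose quotient $\mathcal{O}_r^d/M_r\cong\mathcal{O}_r$ is the local ring with a \emph{unique} composition series of length $r$; pulling this back gives a unique saturated chain from $M_r$ to $\mathcal{O}_r^d$, which is (2). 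Conversely, if some $M_i$ (and hence $L_i$) is not free, then $\mathcal{O}_r^d/M_r\not\cong\mathcal{O}_r$ (its invariant factors are not all equal), so it has at least two distinct composition series, giving at least two saturated chains, contradicting (2) — this is the negation direction $\neg(1)\Rightarrow\neg(2)$.

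For $(2)\Leftrightarrow(3)$ I would unwind the definition of geodesic (Definition \ref{def:geodesic}(2)): $\gamma$ fails to be geodesic iff some consecutive triple $\{v_i,v_{i+1},v_{i+2}\}$ spans a $2$-cell in $\mathcal{B}$, which in building terms means $L_i,L_{i+1},L_{i+2}$ lie in a common simplex, and one checks that this happens exactly when there is an \emph{alternative} vertex $L_{i+1}'$ with $[L_i:L_{i+1}']=[L_{i+1}':L_{i+2}]=q$ forming another length-two color-one path from $L_i$ to $L_{i+2}$ (the two length-two paths bound the triangle). So non-geodesic $\Rightarrow$ non-unique, i.e.\ $(2)\Rightarrow(3)$. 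For the converse, suppose $\gamma$ is not the unique color-one path of length $r$ from $L_0$ to $L_r$; let $\gamma'=L_0'=L_0,\ldots,L_r'=L_r$ be another, and let $i$ be the first index with $L_{i+1}'\neq L_{i+1}$ while $L_i'=L_i$; then by induction from that point (or by a direct local computation in the rank-two sub-picture) I would produce an index $j\ge i$ where $\gamma$ admits a local shortcut, i.e.\ $\{L_j,L_{j+1},L_{j+2}\}$ bounds a triangle, making $\gamma$ non-geodesic — this gives $(3)\Rightarrow(2)$. Alternatively, and perhaps more cleanly, I would close the loop directly by $(3)\Rightarrow(1)$: if $\gamma$ is geodesic then by Lemma \ref{lem:trans-geodes} it is $G$-equivalent to the standard geodesic of Example \ref{exa:standard-geod}, for which $L_r=\diag(\pi^r,1,\ldots,1)L_0$, and this projects in $L_0/\pi^rL_0$ to $\pi^r\mathcal{O}\times\mathcal{O}_r^{d-1}=0\times\mathcal{O}_r^{d-1}$, a free submodule; since freeness is a $GL_d(\mathcal{O}_r)$-invariant condition, $L_r$ is free in $\mathcal{O}_r^d$ in general, which is (1). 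With $(1)\Rightarrow(2)$, $(2)\Rightarrow(3)$, $(3)\Rightarrow(1)$ in hand the lemma follows.

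The step I expect to be the main obstacle is the translation of the \emph{combinatorial} geodesic condition (Definition \ref{def:geodesic}(2), "$\{v_i,v_{i+1},v_{i+2}\}\notin X(2)$") into the \emph{lattice-theoretic} statement that no alternative intermediate vertex exists — i.e.\ showing that $L_i,L_{i+1},L_{i+2}$ spanning a $2$-cell of $\mathcal{B}$ is equivalent to the existence of $L_{i+1}'\ne L_{i+1}$ completing a second color-one path. This requires knowing that in the spherical building which is the link of $L_i$, the vertices adjacent to both the edge-direction of $L_{i+1}$ and landing on $L_{i+2}$ are governed by incidence in the flag complex over $\mathbb{F}_q$, so that a triangle exists iff $L_{i+2}/\pi L_i$ (a rank-$\le 2$ module over $\mathcal{O}_2$) is \emph{not} cyclic. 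Once that local rank-two computation is isolated, everything else is bookkeeping with Fact \ref{fact:Submodules-of-Ord} and Lemma \ref{lem:trans-geodes}; I would streamline the writeup by reducing, via transitivity, to checking the local shortcut criterion at the standard flag, where it becomes an explicit matrix calculation with $\diag$ matrices over $\mathcal{O}_2$.
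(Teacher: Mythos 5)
Your proposal is correct and follows essentially the same route as the paper: you prove the cycle $(1)\Rightarrow(2)$ (via $\mathcal{O}_r^d/M_r\cong\mathcal{O}_r$ having a unique composition series, by Fact~\ref{fact:Submodules-of-Ord}), $(2)\Rightarrow(3)$ (by contraposition: a triangle at $L_i,L_{i+1},L_{i+2}$ means $L_{i+2}$ is a codimension-two subspace of $L_i/\pi L_i\cong\mathbb{F}_q^d$, yielding $q+1$ intermediate paths), and $(3)\Rightarrow(1)$ (reducing via Lemma~\ref{lem:trans-geodes} to the standard geodesic). The aside about $\neg(1)\Rightarrow\neg(2)$ and the sketched direct $(3)\Rightarrow(2)$ argument are superfluous since the three-step cycle already closes; the ``obstacle'' you flag at the end is exactly the codimension-two observation, which is a one-line local computation and not a genuine difficulty.
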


\begin{proof}
\emph{(1)$\rightarrow$(2):} By Fact \ref{fact:Submodules-of-Ord},
if $L_{r}\leq\mathcal{O}_{r}^{d}$ is free so is $\mathcal{O}_{r}^{d}/L_{r}$,
and by index considerations, it is isomorphic to $\mathcal{O}_{r}$
which has a unique composition series. \emph{(2)$\rightarrow$(3):}
If $\gamma$ is not geodesic, then $\pi L_{i}\leq L_{i+2}<L_{i}$
for some $i$, and $[L_{i}:L_{i+2}]=q^{2}$. As $L_{i+2}$ corresponds
in $\nicefrac{L_{i}}{\pi L_{i}}\cong\mathbb{F}_{q}^{d}$ to a subspace
of codimension two, there are $(q+1)$ color-one paths $L_{i}\rightarrow*\rightarrow L_{i+2}$
(including $*=L_{i+1}$), hence $\gamma$ is not unique. \emph{(3)$\rightarrow$(1):}
By Lemma \ref{lem:trans-geodes}, some $g\in G$ takes $L_{0}\dasharrow L_{r}$
to $\mathcal{O}^{d}\dasharrow\pi^{r}\mathcal{O}\times\mathcal{O}^{d-1}$,
and $\pi^{r}\mathcal{O}\times\mathcal{O}^{d-1}$ corresponds to a
free submodule in $\mathcal{O}_{r}^{d}$.
\end{proof}
\begin{prop}
\label{prop:link-power}The link of a vertex in the $r$-power-complex
of $\mathcal{B}=\mathcal{B}_{d}$ is isomorphic to $\mathbb{P}_{\mathrm{fr}}^{d-1}(\mathcal{O}_{r})$,
and the link of a cell of codimension two is either a complete bipartite
graph, or isomorphic to the graph $\mathbb{P}_{\mathrm{fr}}^{2}(\mathcal{O}_{r})$.
\end{prop}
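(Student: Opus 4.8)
The plan is to leverage the two Lemmas just proven, together with Fact \ref{fact:Submodules-of-Ord}, to identify the link of a vertex in the $r$-power complex with $\mathbb{P}_{\mathrm{fr}}^{d-1}(\mathcal{O}_{r})$ cell-by-cell. First I would fix a vertex $v$ of $\mathcal{B}$ and a lattice representative $L_{0}$, so that by the discussion preceding Lemma \ref{lem:trans-geodes}, color-one paths of length $r$ starting at $v$ correspond to flags $M_{r}<\ldots<M_{0}=\mathcal{O}_{r}^{d}$ with each index $q$; by Lemma \ref{lem:geodesics}, the \emph{geodesic} ones correspond exactly to those paths whose endpoint $M_{r}$ is a \emph{free} submodule of $\mathcal{O}_{r}^{d}$. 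Hence the vertices of the link of $v$ in the $r$-power complex --- i.e.\ the endpoints of $r$-geodesics emanating from $v$ --- are in bijection with free submodules $0 < M < \mathcal{O}_{r}^{d}$, which are precisely the vertices of $\mathbb{P}_{\mathrm{fr}}^{d-1}(\mathcal{O}_{r})$ (recall $\mathbb{P}_{\mathrm{fr}}^{d-1}$ uses $\mathcal{O}_{r}^{d}$). So far this is just unwinding definitions; the bijection on vertices is forced.

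The substantive step is matching the higher cells. A $k$-cell in the link of $v$ in the power complex is, by definition of geodesic powering, a set of $k+1$ vertices $w_{0},\ldots,w_{k}$ such that $\{v,w_{0},\ldots,w_{k}\}$ is a $(k+1)$-cell of the power complex, meaning (after reordering) there are $r$-geodesics $v\dasharrow w_{0}$, $w_{0}\dasharrow w_{1}$, \ldots, and closing up. Here I expect the key observation to be that for the cells of $\mathcal{B}$ adjacent to $v$, once we translate $v\dasharrow w_{0}\dasharrow\cdots$ to a standard configuration via Lemma \ref{lem:trans-geodes}, the combinatorial data of which $w_{i}$'s can simultaneously lie in a common cell is governed precisely by the free-flag condition: the $M^{(i)}$ corresponding to the $w_{i}$ must form a free flag in $\mathcal{O}_{r}^{d}$ (in some order). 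I would argue this by passing to $\mathcal{B}$, using that a $(k+1)$-cell of $\mathcal{B}$ through $v$ corresponds to a flag of sublattices, and that the geodesic-power adjacency between the $w_{i}$'s (all at distance $r$ from $v$ along color-one geodesics) translates, via the correspondence above and Lemma \ref{lem:geodesics}(1)$\Leftrightarrow$(3), into nestedness of the associated free submodules. The ordering subtlety in the definition of geodesic powering should match the fact that a free flag, being a chain, can be traversed in a canonical cyclic order.

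For the codimension-two statement, I would specialize $d\to 3$ after taking the link of a $(d-3)$-cell $\tau$: by the link-of-link identity $X_{\sigma\cup\tau}=(X_{\tau})_{\sigma}$ together with the fact that links in $\mathcal{B}$ of codimension-two cells are one-dimensional links in $\mathcal{B}_{3}$, it suffices to treat $d=3$, where $\mathbb{P}_{\mathrm{fr}}^{2}(\mathcal{O}_{r})$ is a graph. The remaining case distinction --- complete bipartite versus $\mathbb{P}_{\mathrm{fr}}^{2}(\mathcal{O}_{r})$ --- should mirror the classical dichotomy for links in $\At$-complexes: depending on the color-type of the codimension-two cell $\tau$ (equivalently, which partite classes its two ``missing colors'' occupy), the link in $\mathcal{B}$ is either $\mathbb{P}^{2}\mathbb{F}_{q}$ or a complete bipartite graph $K_{q+1,q+1}$; powering the former yields $\mathbb{P}_{\mathrm{fr}}^{2}(\mathcal{O}_{r})$ by the vertex-link analysis above, while for the latter I would check directly that the $r$-power of a link which is already complete bipartite (on two color classes, with all geodesics of length $r$ between opposite classes) remains complete bipartite --- here the geodesics are essentially non-backtracking paths since there is no ``triangle obstruction'' to worry about, and non-backtracking $r$-powers of complete bipartite graphs are complete bipartite.

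The main obstacle I anticipate is the cell-matching in dimensions above one: carefully verifying that a collection of free submodules $M^{(0)},\ldots,M^{(k)}$ that are pairwise ``$r$-geodesically adjacent'' in the power complex are in fact totally ordered by inclusion (so form a free flag), rather than merely pairwise compatible. This requires going back to the building and using that the $w_{i}$ all share the base vertex $v$ at geodesic distance $r$, pinning down enough structure via Lemma \ref{lem:trans-geodes} that pairwise nesting of the quotients forces a global chain. The rest is bookkeeping with the Smith Normal Form description in Fact \ref{fact:Submodules-of-Ord}.
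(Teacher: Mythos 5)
Your vertex-level bijection is correct and matches the paper: the discussion preceding Lemma \ref{lem:trans-geodes} identifies color-one $r$-paths out of $v$ with index-$q$ chains in $\mathcal{O}_r^d$, and Lemma \ref{lem:geodesics} cuts these down to free submodules. The paper then takes a slicker route for the higher cells than you anticipate: it observes that a $(d-1)$-cell of the power complex containing $v$ is, by definition, a cyclic chain of $d$ consecutive $r$-geodesics, hence a color-one $rd$-cycle through $L_0$, hence (by the pre-Lemma discussion) a maximal flag in $\mathcal{O}_r^d$ whose $r$-subsampled sub-flag $\{M_{ir}\}$ is a maximal free flag; the converse uses that a maximal free flag has a \emph{unique} refinement to a maximal flag (Fact \ref{fact:Submodules-of-Ord}). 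Lower cells are just subsets of these, which matches the definition of $\mathbb{P}^{d-1}_{\mathrm{fr}}(\mathcal{O}_r)$. The ``main obstacle'' you worry about --- that pairwise $r$-geodesic adjacency of the $w_i$ might not force a chain --- does not arise, because the definition of geodesic powering demands a cyclic ordering outright (``possibly after reordering them, there is an $r$-geodesic path from each $v_i$ to $v_{i+1}$, and from $v_d$ to $v_0$''), and lower-dimensional cells are declared to be subcells of those top cells. There is no pairwise clique condition anywhere, so nothing to reconcile.

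Your treatment of the codimension-two claim goes off course. You propose to reduce to $d=3$ via a link-of-link identity and to reason about ``powering a complete bipartite link,'' but this misidentifies where the dichotomy comes from. Once the vertex-link isomorphism is in hand, the paper argues \emph{entirely in module language}: a codimension-two cell of the power complex is a free flag $\{M_i\}$ of length $d-2$ in $\mathcal{O}_r^d$, and there are exactly two possibilities for its gap structure --- either two distinct indices $i$ with $M_{i+1}/M_i \cong \mathcal{O}_r^2$, or one index with $M_{i+1}/M_i \cong \mathcal{O}_r^3$. In the first case the two independent rank-$1$ insertions give a complete bipartite link; in the second the refinements are maximal free flags in $\mathcal{O}_r^3$, giving $\mathbb{P}^2_{\mathrm{fr}}(\mathcal{O}_r)$. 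In particular the complete bipartite case is \emph{not} obtained by $r$-powering a $K_{q+1,q+1}$ link of the original building; it arises from the ``two rank-$2$ gaps'' configuration of a free flag, and the non-backtracking-power heuristic you invoke does not apply (the link of a codimension-two cell of the power complex is not the power of a link of any fixed cell of $\mathcal{B}$). If you want to salvage a building-side argument for the dichotomy you would have to re-derive the gap-type analysis there anyway, at which point the module computation is both shorter and correct, so I would recommend following the paper's case split on the quotients $M_{i+1}/M_i$ directly.
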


\begin{proof}
By definition, the $r$-power of $\mathcal{B}$ is defined by its
$\left(d-1\right)$-cells, which correspond to cycles of $\mathcal{O}$-lattices
$L_{0},L_{1},\ldots,L_{d-1},L_{0}\in\mathcal{B}(0)$ such that each
$L_{i},L_{i+1\Mod{d}}$ are connected by an $r$-geodesic. Such a
cycle can be rotated to start in any of its vertices, hence by Lemma
\ref{lem:geodesics} each $\left(d-1\right)$-cell containing $L_{0}$
in the power-complex corresponds to a unique color-one $rd$-cycle
\emph{$\left\{ v_{i}\right\} _{i=0}^{rd}$ }in $\mathcal{B}$ such
that $v_{ir}=L_{i}$. By the discussion following Example \ref{exa:free-flag},
this $rd$-cycle corresponds to a maximal flag $\left\{ M_{i}\right\} $
in $L_{0}/\pi^{r}L_{0}\cong\mathcal{O}_{r}^{d}$ which satisfies that
$M_{ir}/M_{\left(i+1\right)r}\cong\mathcal{O}_{r}$ for each $i$,
hence the sub-flag $\left\{ M_{ir}\right\} _{i=0}^{d}$ is a maximal
free flag in $\mathcal{O}_{r}^{d}$. On the other hand, each maximal
free flag in $L_{0}/\pi^{r}L_{0}\cong\mathcal{O}_{r}^{d}$ refines
to a unique maximal flag, which corresponds to a cycle consisting
of $d$ geodesics of length $r$, yielding a $(d-1)$-cell containing
$L_{0}$ in the power-complex. As the cells in $\mathbb{P}_{\mathrm{fr}}^{d-1}(\mathcal{O}_{r})$
are all the subsets of maximal free flags in $\mathcal{O}_{r}^{d}$,
this establishes the stated isomorphism.

If $\tau$ is a $(d-3)$-cell in the $r$-power of $\mathcal{B}$,
it corresponds to a free flag $\left\{ M_{i}\right\} $ of length
$d-2$ in $\mathcal{O}_{r}^{d}$, and two cases arise: either there
are two different $i$ such that $M_{i+1}/M_{i}\cong\mathcal{O}_{r}^{2}$,
or there is a single $i$ for which $M_{i+1}/M_{i}\cong\mathcal{O}_{r}^{3}$.
In the former case, the link of $\tau$ is a complete bipartite graph,
since the choices of the (free) refinements in the two places with
$M_{i+1}/M_{i}\cong\mathcal{O}_{r}^{2}$ are independent. In the latter
case, the possible refinements correspond precisely to maximal free
flags in $\mathcal{O}_{r}^{3}$, resulting in a complex isomorphic
to $\mathbb{P}_{\mathrm{fr}}^{2}(\mathcal{O}_{r})$.
\end{proof}
We can now prove that one-dimensional links in the power-complexes
are excellent expanders:
\begin{thm}
\label{thm:spec-r-links}The graph $\mathbb{P}_{\mathrm{fr}}^{2}(\mathcal{O}_{r})$
is a $\left(\left(q+1\right)q^{r-1}\right)$-regular connected bipartite
graph on $2\left(q^{2}+q+1\right)q^{2\left(r-1\right)}$ vertices,
with adjacency spectrum 
\[
\Spec\left(\mathbb{P}_{\mathrm{fr}}^{2}(\mathcal{O}_{r})\right)=\left\{ \pm\left(q+1\right)q^{r-1},\pm\sqrt{q^{2r-1}},\pm\sqrt{q^{2r-2}},\ldots,\pm\sqrt{q^{r+1}},\pm\sqrt{q^{r}}\right\} .
\]
In particular, its second-largest normalized eigenvalue equals $\frac{\sqrt{q}}{q+1}$
independently of $r$.
\end{thm}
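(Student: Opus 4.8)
The plan is to prove all the assertions simultaneously by induction on $r$, the engine being a natural graph morphism obtained from reduction of modules. Sending a free submodule $M<\mathcal{O}_{r}^{3}$ to its image in $\mathcal{O}_{r-1}^{3}$ (which is again free of the same rank: by Fact \ref{fact:Submodules-of-Ord} a free submodule of $\mathcal{O}_{r}^{3}$ is a coordinate summand up to $GL_{3}(\mathcal{O}_{r})$, and such a summand reduces to one) defines a surjection $\phi\colon\mathbb{P}_{\mathrm{fr}}^{2}(\mathcal{O}_{r})\to\mathbb{P}_{\mathrm{fr}}^{2}(\mathcal{O}_{r-1})$ that respects the line/plane bipartition and is $q^{2}$-to-$1$ on vertices. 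The base case $r=1$ is the classical fact recalled in the introduction, $\mathbb{P}_{\mathrm{fr}}^{2}(\mathbb{F}_{q})=\mathbb{P}^{2}\mathbb{F}_{q}$ with spectrum $\{\pm(q+1),\pm\sqrt{q}\}$.

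First I would record the elementary parameters. By Fact \ref{fact:Submodules-of-Ord} the free rank-$1$ submodules of $\mathcal{O}_{r}^{3}$ are the $\mathcal{O}_{r}v$ with $v\notin\pi\mathcal{O}_{r}^{3}$; counting these modulo units gives $(q^{2}+q+1)q^{2(r-1)}$, and applying the same Fact to $\mathcal{O}_{r}^{3}/M\cong\mathcal{O}_{r}^{2}$ shows the free rank-$2$ submodules are equinumerous and that every free line lies in exactly $(q+1)q^{r-1}$ free planes and conversely. Thus $\mathbb{P}_{\mathrm{fr}}^{2}(\mathcal{O}_{r})$ is a $(q+1)q^{r-1}$-regular bipartite graph on $2(q^{2}+q+1)q^{2(r-1)}$ vertices. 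Writing its adjacency operator as $A_{r}=\left(\begin{smallmatrix}0 & B\\ B^{T} & 0\end{smallmatrix}\right)$ with $B$ the (square) line--plane incidence matrix, it suffices to compute the spectrum of $M_{r}:=BB^{T}$ acting on functions on free lines; once $M_{r}$ is known to be invertible, the eigenvalues of $A_{r}$ are exactly $\{\pm\sqrt{\mu}:\mu\in\Spec M_{r}\}$ with multiplicities, and simplicity of the top eigenvalue gives connectedness.

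The heart of the argument is a local analysis of $M_{r}$. The group $GL_{3}(\mathcal{O}_{r})$ acts transitively on free lines, and Smith normal form (Fact \ref{fact:Submodules-of-Ord}) shows its orbits on ordered pairs $(L,L')$ are indexed by $k=r-\ell(L\cap L')\in\{0,1,\dots,r\}$, where $\ell$ is length and $k=0$ means $L=L'$; hence $(M_{r})_{L,L'}$ depends only on $k(L,L')$, say it equals $c_{k}$. A direct count, again via Fact \ref{fact:Submodules-of-Ord} --- reducing to the number of free lines of $\mathcal{O}_{r}^{2}$ whose reduction mod $\pi^{k}$ is a fixed free line of $\mathcal{O}_{k}^{2}$ --- yields $c_{0}=(q+1)q^{r-1}$ and $c_{k}=q^{r-k}$ for $1\le k\le r$. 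On the other hand $\phi$ is $q$-to-$1$ on vertex neighbourhoods, which immediately gives the intertwining $A_{r}\phi^{*}=q\,\phi^{*}A_{r-1}$; consequently $\im\phi^{*}$ is $A_{r}$-invariant and $A_{r}$ restricted to it is conjugate to $q\,A_{r-1}$, and the orthocomplement $W=W_{\mathrm{line}}\oplus W_{\mathrm{plane}}$ is $A_{r}$-invariant too. What remains is to show $M_{r}$ acts on $W_{\mathrm{line}}$ as the scalar $q^{r}$; since $M_{r}-q^{r}I$ is symmetric this is equivalent to each of its columns $L\mapsto c_{k(L,L')}-q^{r}[L=L']$ being constant along the fibres of $\phi$. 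That column equals $q^{r-1}$ when $k(L,L')\le1$ and $q^{r-k(L,L')}$ otherwise, while the $\phi$-fibre through $L$ is precisely $\{L_{1}:k(L,L_{1})\le1\}$, so the claim reduces to the bookkeeping statement: if $\phi(L_{1})=\phi(L_{2})$ then $k(L_{1},L')=k(L_{2},L')$ unless both are $\le1$. This one verifies by taking $L'=\mathcal{O}_{r}e_{1}$, noting $k(L_{i},L')=r-\min\{v(y_{i}),v(z_{i})\}$ for a generator $(x_{i},y_{i},z_{i})$ of $L_{i}$, and using that the generators of $L_{1},L_{2}$ may be chosen to agree modulo $\pi^{r-1}$, so any valuation among $v(y_{i}),v(z_{i})$ that is $\le r-2$ is unchanged upon passing from $L_{1}$ to $L_{2}$.

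Granting this, $A_{r}^{2}=q^{r}I$ on $W$, so $\Spec(A_{r}|_{W})=\{\pm\sqrt{q^{r}}\}$, whereas $\Spec(A_{r}|_{\im\phi^{*}})=q\cdot\Spec(A_{r-1})=\{\pm(q+1)q^{r-1},\pm\sqrt{q^{2r-1}},\pm\sqrt{q^{2r-2}},\dots,\pm\sqrt{q^{r+1}}\}$ by induction; adjoining the new eigenvalues $\pm\sqrt{q^{r}}$ (which are smaller than every element of the previous set, hence genuinely new) produces exactly the asserted spectrum, in particular $M_{r}$ is invertible and $(q+1)q^{r-1}$ remains a simple eigenvalue, so the graph is connected, and the normalized second eigenvalue is $\sqrt{q^{2r-1}}/\bigl((q+1)q^{r-1}\bigr)=\sqrt{q}/(q+1)$. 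I expect the genuine work to lie in the third paragraph: pinning down the orbit parameters $c_{k}$ in closed form, and especially the valuation lemma governing how the relative-position function $k(\cdot,L')$ behaves along the fibres of $\phi$ --- it is exactly this rigidity that forces $M_{r}$ to be scalar on $W_{\mathrm{line}}$ and thereby makes the final spectrum so simple.
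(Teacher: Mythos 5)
Your proposal is correct, and it takes a genuinely different route from the paper's. The paper works entirely inside a fixed $r$: it endows $\mathcal{F}_r^1$ with the ultrametric $\Delta$ coming from the geodesic tree, computes the entries $Q_\delta$ of $\mathcal{A}^2|_{\mathcal{F}_r^1}$ as a function of $\Delta$ alone, and then proves by an internal induction on $\ell$ that $B^{(\ell)}=\prod_{j=r}^{r+\ell-1}(Q-q^j)$ has constant entries for $\Delta\le\ell$; at $\ell=r$ this shows $\prod_{j=r}^{2r-1}(Q-q^j)$ is a nonzero multiple of the all-ones matrix, from which the spectrum and connectedness drop out. The arithmetic in that induction (equations \eqref{eq:N_d_e_z}--\eqref{eq:monster}) is fairly heavy. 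You instead induct on $r$ via the reduction morphism $\phi\colon\mathbb{P}^2_{\mathrm{fr}}(\mathcal{O}_r)\to\mathbb{P}^2_{\mathrm{fr}}(\mathcal{O}_{r-1})$, prove the intertwining $A_r\phi^*=q\,\phi^*A_{r-1}$ (which follows from $\phi$ being $q^2$-to-$1$ on vertices and $q$-to-$1$ on each vertex link), and then show $Q=BB^T$ acts as the scalar $q^r$ on $(\im\phi^*)^\perp$. Your relative-position parameter $k(L,L')$ coincides with the paper's $\Delta(L,L')$ (both equal $r-\ell(L\cap L')$), and the crucial rigidity --- if $\phi(L_1)=\phi(L_2)$ then $k(L_1,L')=k(L_2,L')$ unless both are $\le 1$ --- is precisely the ultrametric inequality for $\Delta$ combined with the fact that the $\phi$-fibre through $L$ is $\{L_1:\Delta(L,L_1)\le1\}$; the column-constancy check then reduces to the identity $(q+1)q^{r-1}-q^r=q^{r-1}$. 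What this buys you: the computation collapses from the paper's full inductive identity on $B^{(\ell)}$ to a single two-case verification, and the recursive intertwiner gives an explicit filtration $\im(\phi^*)^r\subset\cdots\subset\im\phi^*\subset L^2(\mathcal{F}_r^1)$ whose graded pieces are the $q^{r+j}$-eigenspaces of $Q$, so the eigenvalue $\pm\sqrt{q^{r+j}}$ is literally the image of the ``new'' eigenvalue at level $r-j$ scaled by $q^j$. What the paper's proof records that yours does not is the finer quantitative information on the entries of $B^{(\ell)}$, which the authors note for future reference. The only places I would tighten your write-up are (i) to state that the reduction $\mathcal{O}_r^3/v\to\mathcal{O}_{r-1}^3/\phi(v)$ is again the standard reduction $\mathcal{O}_r^2\to\mathcal{O}_{r-1}^2$ (so that the $q$-to-$1$ count on links really is Fact \ref{fact:Submodules-of-Ord} applied in rank $2$), and (ii) to observe explicitly that $Q-q^rI$ being symmetric with all columns in $\im\phi^*|_{\mathrm{line}}$ forces it to kill $W_{\mathrm{line}}$; but both points are implicit and the argument is sound.
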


\begin{proof}
Denoting by $\mathcal{F}_{r}^{i}$ the set of free modules of rank
$i$ in $\mathcal{O}_{r}^{3}$, the graph $\mathbb{P}_{\mathrm{fr}}^{2}(\mathcal{O}_{r})$
is the bipartite graph with vertices $\mathcal{F}_{r}^{1}\sqcup\mathcal{F}_{r}^{2}$
and edges given by inclusion. By Fact \ref{fact:Submodules-of-Ord},
$GL_{3}(\mathcal{O}_{r})$ acts transitively on each $\mathcal{F}_{r}^{i}$,
and the stabilizer of $\mathcal{O}_{r}^{3}\times0\times0\in\mathcal{F}_{r}^{1}$
is $\left\{ g=\left(\begin{smallmatrix}* & * & *\\
0 & * & *\\
0 & * & *
\end{smallmatrix}\right)\in GL_{3}(\mathcal{O}_{r})\right\} $, hence 
\begin{equation}
\left|\mathcal{F}_{r}^{1}\right|=\left|GL_{3}(\mathcal{O}_{r})\right|/\left|\left(\begin{smallmatrix}* & * & *\\
0 & * & *\\
0 & * & *
\end{smallmatrix}\right)\right|=\frac{\left(q^{3}-1\right)\left(q^{3}-q\right)\left(q^{3}-q^{2}\right)q^{9\left(r-1\right)}}{\left(q-1\right)q^{2}(q^{2}-1)(q^{2}-q)q^{7\left(r-1\right)}}=\left(q^{2}+q+1\right)q^{2\left(r-1\right)},\label{eq:F1r_size}
\end{equation}
and $|\mathcal{F}_{r}^{2}|=|\mathcal{F}_{r}^{1}|$ by a similar computation
or by duality (Fact \ref{fact:Submodules-of-Ord}). This gives the
size of $\mathbb{P}_{\mathrm{fr}}^{2}(\mathcal{O}_{r})$, and to comute
its degree we observe that the neighbors of a fixed vertex in $\mathcal{F}_{r}^{2}$
correspond to rank-one free submodules in $\mathcal{O}_{r}^{2}$,
of which there are
\[
\left|GL_{2}(\mathcal{O}_{r})\right|/\left|\left(\begin{smallmatrix}* & *\\
0 & *
\end{smallmatrix}\right)\right|=\frac{\left(q^{2}-1\right)\left(q^{2}-q\right)q^{4\left(r-1\right)}}{\left(q-1\right)q\left(q-1\right)q^{3\left(r-1\right)}}=\left(q+1\right)q^{\left(r-1\right)}.
\]
For any regular bipartite graph $\mathcal{G}$ on vertices $L\sqcup R$,
the spectrum of $A=\mathrm{Adj}_{\mathcal{G}}$ satisfies $\Spec A=\big\{\pm\!\sqrt{\lambda}\,\big|\,\lambda\in\Spec A^{2}\big|_{L}\big\}$.
We denote $\mathcal{A}=\mathrm{Adj}(\mathbb{P}_{\mathrm{fr}}^{2}(\mathcal{O}_{r}))$,
and let $Q=\mathcal{A}^{2}\big|_{\mathcal{F}_{r}^{1}}$. Recalling
that $\mathcal{F}_{r}^{1}$ corresponds to the endpoint of $r$-geodesics
leaving $v_{0}\in\mathcal{B}$, we observe the subgraph of $\mathcal{B}$
formed by these geodesic (including their inner vertices). This is
a rooted tree of height $r$, with the vertices $\mathcal{F}_{r}^{1}$
as leaves, root degree $\left(q^{2}+q+1\right)$, and all inner nodes
having $q^{2}$ descendants\footnote{Any color one edge has $q^{2}$ extensions to a geodesic of length
two - this follows from \eqref{eq:Ga_Ta}, \eqref{eq:F1r_size}.}. For $v,w\in\mathcal{F}_{r}^{1}$, denote by $\Delta\left(v,w\right)$
the shortest distance from $v$ and $w$ to a common ancestor in this
tree. This gives an ultrametric distance function on $\mathcal{F}_{r}^{1}$,
and the $\left(v,w\right)$-entries of all polynomials in $Q$ only
depend on $\Delta\left(v,w\right)$. For the rest of the proof all
matrices will be indexed by $\mathcal{F}_{r}^{1}$, and by $M_{\delta}$
we mean the value of $M_{v,w}$ for any $v,w$ with $\Delta\left(v,w\right)=\delta$.
Defining $B^{\left(\ell\right)}=\prod_{j=r}^{r+\ell-1}\left(Q-q^{j}\right)$,
we will show by induction that $B_{\delta}^{\left(\ell\right)}$ is
constant for $0\leq\delta\leq\ell$. In particular, $B^{\left(r\right)}=\prod_{j=r}^{2r-1}\left(Q-q^{j}\right)$
is a constant multiple of the all-one matrix, and this constant is
not zero as $Q\one=(q+1)^{2}q^{2(r-1)}\one$. This implies that $\mathbb{P}_{\mathrm{fr}}^{2}(\mathcal{O}_{r})$
is connected, and that the spectrum of $Q$ is $\big\{(q+1)^{2}q^{2(r-1)},q^{2r-1},q^{2r-2},\ldots,q^{r}\big\}$,
which yields the theorem. In fact, one can show by induction the following:
\[
B_{\delta-1}^{\left(\ell\right)}-B_{\delta}^{\left(\ell\right)}=\begin{cases}
q^{\ell r-\delta+{\ell-1 \choose 2}}\left(q^{\ell}-1\right)\prod_{j=\delta-\ell}^{\delta-2}\left(q^{j}-1\right) & \ell<\delta\leq r\\
0 & 1\leq\delta\leq\ell
\end{cases}
\]
(it turns out that this is not needed for the proof, but we record
this observation here for the benefit of future research). For two
matrices indexed by $\Delta$-values, the appropriate multiplication
rule is $\left(AB\right)_{\delta}=\sum_{\varepsilon,\zeta}N_{\varepsilon,\zeta}^{\delta}A_{\varepsilon}B_{\zeta}$,
where $N_{\varepsilon,\zeta}^{\delta}$ is the number of vertices
$u$ satisfying $\Delta\left(v,u\right)=\varepsilon$, $\Delta\left(u,w\right)=\zeta$,
for (any pair of) $v,w$ with $\Delta\left(v,w\right)=\delta$. It
is not hard to see that $N_{\varepsilon,\zeta}^{\delta}=N_{\zeta,\varepsilon}^{\delta}$,
so from now on we always assume $\varepsilon\leq\zeta$, writing 
\[
\left(AB\right)_{\delta}=\sum_{\varepsilon<\zeta}N_{\varepsilon,\zeta}^{\delta}\left(A_{\varepsilon}B_{\zeta}+A_{\zeta}B_{\varepsilon}\right)+\sum_{\varepsilon}N_{\varepsilon,\varepsilon}^{\delta}A_{\varepsilon}B_{\varepsilon}.
\]
By the ultrametric triangle inequality for $\Delta$, if $N_{\varepsilon,\zeta}^{\delta}\neq0$
then either $\delta<\varepsilon=\zeta$ or $\varepsilon\leq\zeta=\delta$,
so that we can further write 
\[
\left(AB\right)_{\delta}=\sum_{\varepsilon=0}^{\delta-1}N_{\varepsilon,\delta}^{\delta}\left(A_{\varepsilon}B_{\delta}+A_{\delta}B_{\varepsilon}\right)+\sum_{\varepsilon=\delta}^{r}N_{\varepsilon,\varepsilon}^{\delta}A_{\varepsilon}B_{\varepsilon}.
\]
Careful counting reveals that whenever $N_{\varepsilon,\zeta}^{\delta}\neq0$,
it is given by 
\begin{equation}
\begin{array}{c|c|c|c}
N_{\varepsilon,\zeta}^{\delta} & \varepsilon=0 & 0<\varepsilon<r & \varepsilon=r\vphantom{\Big|}\\
\hline \varepsilon\neq\delta & 1 & \left(q^{2}-1\right)q^{2\left(\varepsilon-1\right)} & \left(q^{2}+q\right)q^{2\left(r-1\right)}\vphantom{\Big|}\\
\varepsilon=\delta & 1 & \left(q^{2}-2\right)q^{2\left(\varepsilon-1\right)} & \left(q^{2}+q-1\right)q^{2\left(r-1\right)}\vphantom{\Big|}
\end{array}.\label{eq:N_d_e_z}
\end{equation}
In particular, whenever $\varepsilon<\delta$ or $\delta<\varepsilon<r$,
the value of $N_{\varepsilon,\zeta}^{\delta}$ does not depend on
$\delta$. This leads to many simplifications for the differences
between entries of $AB$, resulting in:
\begin{equation}
\begin{alignedat}{1}\left(AB\right)_{\delta-1}-\left(AB\right)_{\delta} & =\left(A_{\delta-1}-A_{\delta}\right)\sum_{\varepsilon<\delta-1}N_{\varepsilon,\delta}^{\delta}B_{\varepsilon}+\left(B_{\delta-1}-B_{\delta}\right)\sum_{\varepsilon<\delta-1}N_{\varepsilon,\delta}^{\delta}A_{\varepsilon}\\
 & \phantom{=}+N_{\delta-1,\delta-1}^{\delta-1}A_{\delta-1}B_{\delta-1}-N_{\delta-1,\delta}^{\delta}\left(A_{\delta-1}B_{\delta}+A_{\delta}B_{\delta-1}\right)+q^{2\left(\delta-1\right)}A_{\delta}B_{\delta}
\end{alignedat}
\label{eq:that}
\end{equation}
(note in particular that all $A_{t},B_{t}$ with $t>\delta$ cancel
out). We finally compute $Q$: given $L_{1},L_{1}'\in\mathcal{F}_{r}^{1}$
with $\Delta\left(L_{1},L_{1}'\right)=\delta$, the entry $Q_{\delta}$
is the number of $L_{2}\in\mathcal{F}_{r}^{2}$ which complete both
of them to a free flag. This corresponds to $L_{2}\leq\mathcal{O}_{r}^{3}/\left(L_{1}+L_{1}'\right)$
with $\mathcal{O}_{r}^{3}/L_{2}\cong\mathcal{O}_{r}$, giving 
\[
Q_{\delta}=\begin{cases}
\left(q+1\right)q^{r-1} & 0=\delta\\
q^{r-\delta} & 1\leq\delta\leq r
\end{cases}.
\]
We see that it is easier to work with $M:=Q-q^{r-1}$, which is simply
$M_{\delta}=q^{r-\delta}$, so that taking $A=M-q^{\ell+r}+q^{r-1}$
and $B=B^{\left(\ell\right)}$ we obtain $B^{\left(\ell+1\right)}=AB$,
and using \eqref{eq:that} we have
\begin{gather}
B_{\delta-1}^{\left(\ell+1\right)}-B_{\delta}^{\left(\ell+1\right)}=\left(AB\right)_{\delta-1}-\left(AB\right)_{\delta}=\left(MB\right)_{\delta-1}-\left(MB\right)_{\delta}-\left(q^{\ell+r}-q^{r-1}\right)\left(B_{\delta-1}-B_{\delta}\right)\nonumber \\
=\left[\begin{aligned} & \left(q^{r-\delta+1}-q^{r-\delta}\right)\sum\nolimits _{\varepsilon<\delta-1}N_{\varepsilon,\delta}^{\delta}B_{\varepsilon}+\left(B_{\delta-1}-B_{\delta}\right)\sum\nolimits _{\varepsilon<\delta-1}N_{\varepsilon,\delta}^{\delta}q^{r-\varepsilon}\vphantom{\Big|}\\
 & +N_{\delta-1,\delta-1}^{\delta-1}q^{r-\delta+1}B_{\delta-1}-N_{\delta-1,\delta}^{\delta}\left(q^{r-\delta+1}B_{\delta}+q^{r-\delta}B_{\delta-1}\right)\vphantom{\Big|}\\
 & +q^{r+\delta-2}B_{\delta}-\left(q^{\ell+r}-q^{r-1}\right)\left(B_{\delta-1}-B_{\delta}\right)\vphantom{\Big|}
\end{aligned}
\right]\nonumber \\
=\left[\begin{aligned} & \left(q^{r-\delta+1}-q^{r-\delta}\right)\sum\nolimits _{\varepsilon<\delta-1}N_{\varepsilon,\delta}^{\delta}B_{\varepsilon}+\left(B_{\delta-1}-B_{\delta}\right)\left(\sum\nolimits _{\varepsilon<\delta-1}N_{\varepsilon,\delta}^{\delta}q^{r-\varepsilon}-q^{\ell+r}+q^{r-1}\right)\vphantom{\Big|}\\
 & +B_{\delta-1}\left(N_{\delta-1,\delta-1}^{\delta-1}q^{r-\delta+1}-N_{\delta-1,\delta}^{\delta}q^{r-\delta}\right)-B_{\delta}\left(N_{\delta-1,\delta}^{\delta}q^{r-\delta+1}-q^{r+\delta-2}\right)
\end{aligned}
\right].\label{eq:monster}
\end{gather}
For $\delta\leq\ell$, we have by the induction hypothesis $B_{0}=\ldots=B_{\delta}$,
and thus \eqref{eq:monster} simplifies to 
\begin{equation}
q^{r-\delta}B_{0}\left[\left(q-1\right)\sum_{\varepsilon<\delta-1}N_{\varepsilon,\delta}^{\delta}+N_{\delta-1,\delta-1}^{\delta-1}q-N_{\delta-1,\delta}^{\delta}\left(q+1\right)+q^{2\left(\delta-1\right)}\right].\label{eq:this}
\end{equation}
For $1<\delta\leq\ell$, plugging \eqref{eq:N_d_e_z} into \eqref{eq:this}
gives 
\[
q^{r-\delta}B_{0}\left[(q-1)\Big(1+(q^{2}-1)\sum_{\varepsilon=1}^{\delta-2}q^{2(\varepsilon-1)})\Big)\negmedspace+(q^{2}-2)q^{2\delta-3}-(q^{2}-1)q^{2\delta-4}(q+1)+q^{2(\delta-1)}\right]=0,
\]
and for $\delta=1$, it gives $q^{r-1}B_{0}\left[q-\left(q+1\right)+1\right]=0$
as well. Finally, taking $\delta=\ell+1$ in \eqref{eq:monster} yields
\begin{gather*}
B_{0}\left[\left(q^{r-\ell}-q^{r-\ell-1}\right)\left(q^{2\ell-2}\right)-q^{\ell+r}+q^{\ell+r-1}+q^{\ell+r-2}+q^{2\ell-2}\left(\left(q^{2}-2\right)q^{r-\ell}-\left(q^{2}-1\right)q^{r-\ell-1}\right)\right]\\
+B_{\ell+1}\left[q^{\ell+r}-q^{\ell+r-1}-q^{\ell+r-2}-\left(q^{2}-1\right)q^{2\ell-2}\cdot q^{r-\ell}+q^{r+\ell-1}\right]=B_{0}\cdot0+B_{\ell+1}\cdot0=0,
\end{gather*}
which establishes the induction.
\end{proof}
In particular, we conclude that the link spectrum only depends on
the residue field:
\begin{cor}
\label{cor:isospect}If $F,F'$ are non-archimedean local fields with
the same residue order (e.g.\ $\mathbb{Q}_{p}$ and $\mathbb{F}_{p}((t))$),
then the free projective planes $\mathbb{P}_{\mathrm{fr}}^{2}(\mathcal{O}_{r})$
and $\mathbb{P}_{\mathrm{fr}}^{2}(\mathcal{O}_{r}')$ are isospectral
graphs.
\end{cor}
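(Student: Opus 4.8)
The plan is to observe that Theorem \ref{thm:spec-r-links} proves more than its statement records: its proof computes the spectrum of $\mathbb{P}_{\mathrm{fr}}^{2}(\mathcal{O}_{r})$ by a calculation carried out entirely in terms of the residue order $q$ and the parameter $r$, with no further dependence on $F$. First I would recall from that proof that $\mathbb{P}_{\mathrm{fr}}^{2}(\mathcal{O}_{r})$ is the bipartite graph on $\mathcal{F}_{r}^{1}\sqcup\mathcal{F}_{r}^{2}$ with $\left|\mathcal{F}_{r}^{1}\right|=\left|\mathcal{F}_{r}^{2}\right|=\left(q^{2}+q+1\right)q^{2\left(r-1\right)}$; writing $\mathcal{A}$ for its adjacency matrix and $Q=\mathcal{A}^{2}\big|_{\mathcal{F}_{r}^{1}}$, bipartiteness together with the equality of the two part sizes gives $\tr\left(\mathcal{A}^{2k+1}\right)=0$ and $\tr\left(\mathcal{A}^{2k}\right)=2\tr\left(Q^{k}\right)$ for every $k\geq0$.

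The heart of the argument is that the entries of every power $Q^{k}$ depend only on the ultrametric distance $\Delta\left(v,w\right)\in\left\{ 0,1,\ldots,r\right\}$: indeed $Q$ itself has this property, with $Q_{0}=\left(q+1\right)q^{r-1}$ and $Q_{\delta}=q^{r-\delta}$ for $1\leq\delta\leq r$, and products are governed by the coefficients $N_{\varepsilon,\zeta}^{\delta}$ tabulated in \eqref{eq:N_d_e_z} -- all of which are explicit functions of $q$, with index ranges bounded by $r$, and involve no other invariant of $F$. Hence $\left(Q^{k}\right)_{v,v}=\left(Q^{k}\right)_{0}$ for every $v$, so $\tr\left(Q^{k}\right)=\left(q^{2}+q+1\right)q^{2\left(r-1\right)}\cdot\left(Q^{k}\right)_{0}$ is a fixed function of $q$ and $r$. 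Combining this with the first paragraph, $\tr\left(\mathcal{A}^{m}\right)$ depends only on $q$ and $r$ for every $m\geq0$; by Newton's identities this pins down the characteristic polynomial of $\mathcal{A}$, and since $F$ and $F'$ share the same residue order $q$, the graphs $\mathbb{P}_{\mathrm{fr}}^{2}(\mathcal{O}_{r})$ and $\mathbb{P}_{\mathrm{fr}}^{2}(\mathcal{O}_{r}')$ have equal characteristic polynomials, i.e.\ are isospectral.

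I do not anticipate a real obstacle; the corollary is essentially a bookkeeping consequence of the theorem. The one point that needs care is that Theorem \ref{thm:spec-r-links} lists only the \emph{distinct} eigenvalues, whereas isospectrality also requires matching multiplicities -- which is precisely why I would route the argument through the trace identities above (equivalently, one may note that $\left\{ A_{\delta}\right\} _{\delta=0}^{r}$ spans a Bose--Mesner algebra whose structure constants $N_{\varepsilon,\zeta}^{\delta}$ depend only on $q$ and $r$, so the ranks of its primitive idempotents, and hence the eigenvalue multiplicities of $Q$, are likewise functions of $q$ and $r$). I would present the trace version, as it is the most self-contained.
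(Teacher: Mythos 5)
Your proposal is correct, and in fact it is more careful than the paper itself, which offers no explicit proof of the corollary beyond the remark that ``the link spectrum only depends on the residue field'' -- implicitly citing Theorem~\ref{thm:spec-r-links}. As you rightly observe, that theorem as stated lists only the \emph{distinct} eigenvalues, so it does not by itself yield isospectrality, which requires matching multiplicities. Your remedy is exactly the right one: the proof of the theorem shows that $Q=\mathcal{A}^2\big|_{\mathcal{F}_r^1}$ lies in the commutative algebra spanned by the $\Delta$-class matrices $A_\delta$, $\delta=0,\ldots,r$, whose structure constants $N_{\varepsilon,\zeta}^{\delta}$ in~\eqref{eq:N_d_e_z} and whose defining ultrametric (a rooted tree with root degree $q^2+q+1$ and branching factor $q^2$) are functions of $q$ and $r$ alone. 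Since $Q$ itself has $\Delta$-class entries $Q_0=(q+1)q^{r-1}$, $Q_\delta=q^{r-\delta}$ depending only on $q,r$, all the traces $\tr(Q^k)$ are determined, hence so is the characteristic polynomial of $Q$, and (via bipartiteness with equal part sizes and the absence of $0$ in $\Spec Q$) that of $\mathcal{A}$. One could state this even more strongly: the matrix $Q$ for $F$ and for $F'$ are literally conjugate, since any isomorphism of the two rooted $q^2$-ary trees of height $r$ carries one $Q$ to the other; this is perhaps the shortest route, and is worth noting because, as the paper remarks, the two graphs $\mathbb{P}_{\mathrm{fr}}^2(\mathcal{O}_r)$ and $\mathbb{P}_{\mathrm{fr}}^2(\mathcal{O}_r')$ are conjecturally \emph{not} isomorphic for $r\geq 2$, so one genuinely cannot shortcut the argument by an isomorphism of the graphs themselves.
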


When $r=1$, both graphs coincide with the projective plane over the
residue field $\mathbb{F}_{q}$, and are isomorphic. However, we conjecture
that for $r\geq2$ these are non-isomorphic graphs when $F\not\cong F'$
(we have verified this for some small values of $r$ and $q$).

\subsection{\label{subsec:Expansion-between-vertices-and-geod}Expansion between
vertices and $r$-geodesics}

In this section we establish some global expansion results for geodesic
powers of Ramanujan complexes. These will be useful for the applications
we present in §\ref{sec:Applications-to-sampling}. Let $F$, $\mathcal{O}$,
and $q$ be as in §\ref{subsec:Spectrum-of-links}, $G=PGL_{3}(F)$,
$\Gamma<G$ a cocompact torsion-free lattice, and $X=\Gamma\backslash\mathcal{B}_{3}(F)$
a non-tripartite Ramanujan complex. Denote by $G^{(r)}=G^{(r)}(X)$
the bipartite graph formed by $r$-geodesics in $X$ on one side,
and the vertices $X(0)$ on the other one, with $v\in X(0)$ connected
to a geodesic $\gamma$ if it appears along it. In order to study
expansion in $G^{\left(r\right)}$ we first compute the expansion
across $m$-geodesics in $X$, for all $0\leq m\leq r$. Let $S_{m}^{1}\left(v\right)$
be all endpoints of $m$-geodesics (of color one) starting at $v$,
and $S_{m}^{2}\left(v\right)$ the set of $w$ with an $m$-geodesic
from $w$ to $v$ (or equivalently, a color-two $m$-geodesic from
$v$ to $w$). Denote
\[
\left(\mathcal{A}_{m}f\right)\left(v\right):=\sum\nolimits _{w\in S^{1}(v)\cup S^{2}\left(v\right)}f\left(w\right)\qquad\left(f\in L^{2}\left(X(0)\right)\right),
\]
and observe that $\mathcal{A}_{1}=A_{1}+A_{2}$, the colored adjacency
operators from \eqref{eq:col-adj}. We call an eigenvalue of $\mathcal{A}_{m}$
trivial if its eigenfunction is an eigenfunction of $A_{1},A_{2}$
with eigenvalue of magnitude $q^{2}+q+1$, and denote by $\lambda^{\left(m\right)}$
the largest nontrivial eigenvalue of $\mathcal{A}_{m}$ (in absolute
value).
\begin{thm}
\label{thm:spec-geodesics}Let $X$ be a Ramanujan $\widetilde{A}_{2}$-complex
of density $q$, and $m\geq1$. The degree of $\mathcal{A}_{m}$ on
$X$ is $k=2\left(q^{2}+q+1\right)q^{2\left(m-1\right)}$, and
\[
\lambda^{\left(m\right)}\leq\left(m^{2}+3m+2\right)q^{m}-2\left(m^{2}-1\right)q^{m-1}+\left(m^{2}-3m+2\right)q^{m-2},
\]
so that the largest normalized nontrivial eigenvalue of $\mathcal{A}_{m}$
is $\frac{\lambda^{\left(m\right)}}{k}\leq\frac{\left(m^{2}+3m+2\right)}{q^{m}}$.
The trivial spectrum of $\mathcal{A}_{m}$ (including multiplicities)
is
\[
{\textstyle {\text{\emph{trivial}}\atop \text{\emph{spectrum}}}}\!\left(\mathcal{A}_{m}\right)=\begin{cases}
\left\{ k\right\}  & X\text{ is non-tripartite}\\
\left\{ k,-\frac{k}{2},-\frac{k}{2}\right\}  & X\text{ is tripartite and \ensuremath{3\nmid m}}\\
\left\{ k,k,k\right\}  & X\text{ is tripartite and \ensuremath{3\mid m}}.
\end{cases}
\]
\end{thm}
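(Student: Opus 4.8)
The plan is to diagonalise $\mathcal{A}_{m}$ against the joint eigenspaces of the colored adjacency operators $A_{1},A_{2}$, which commute and generate the spherical Hecke algebra of $PGL_{3}(F)$. Write $T_{m}^{1}$ for the operator $\left(T_{m}^{1}f\right)(v)=\sum_{w\in S_{m}^{1}(v)}f(w)$ counting endpoints of color-one $m$-geodesics, and $T_{m}^{2}$ for its transpose. For $m\geq1$ the sets $S_{m}^{1}(v)$ and $S_{m}^{2}(v)$ are disjoint (one consists of the vertices of $PGL_{3}$-type $(m,0,0)$ over $v$, the other of type $(m,m,0)$), so $\mathcal{A}_{m}=T_{m}^{1}+T_{m}^{2}$. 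By Lemma~\ref{lem:geodesics} and Fact~\ref{fact:Submodules-of-Ord}, a color-one path of the right length is geodesic iff its endpoint is a free submodule of the quotient, iff the quotient of lattices is cyclic of order $q^{m}$; hence on $\mathcal{B}_{3}$ the operator $T_{m}^{1}$ is the characteristic function of the double coset $K\diag(\pi^{m},1,1)K$ and $T_{m}^{2}$ that of its inverse. In particular $T_{m}^{1}$ is regular of degree $(q^{2}+q+1)q^{2(m-1)}$---the number of leaves of the tree of $m$-geodesics leaving a vertex, which has root-degree $q^{2}+q+1$ and inner branching $q^{2}$---so $\mathcal{A}_{m}$ is $k$-regular with $k=2(q^{2}+q+1)q^{2(m-1)}$.

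The heart of the proof is to compute the scalar $\phi_{m}$ by which $T_{m}^{1}$ acts on a common $A_{1},A_{2}$-eigenfunction with Satake parameter $(z_{1},z_{2},z_{3})$; write $e_{i}=e_{i}(z_{1},z_{2},z_{3})$, so $e_{3}=1$ and (as in the example in Section~\ref{subsec:-Buildings-and-complexes}) $A_{1}$ acts by $qe_{1}$, $A_{2}$ by $qe_{2}$. A direct count---of how the $q^{2}+q+1$ color-one extensions of a length-$(m-1)$ geodesic split into $q^{2}$ geodesic ones and $q+1$ triangle-forming ones, the latter re-expressed through shorter geodesics followed by one color-two step---yields the recursion $T_{m}^{1}=A_{1}T_{m-1}^{1}-qA_{2}T_{m-2}^{1}+q^{3}T_{m-3}^{1}$ for $m\geq4$, with $T_{1}^{1}=A_{1}$, $T_{2}^{1}=A_{1}^{2}-(q+1)A_{2}$, and $T_{3}^{1}$ an explicit polynomial in $A_{1},A_{2}$; equivalently $\sum_{m\geq1}\phi_{m}u^{m}=\frac{q}{q-1}\bigl(\prod_{i}\frac{1-z_{i}u}{1-qz_{i}u}-1\bigr)$. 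Expanding over the $z_{i}$ gives the closed form $\phi_{m}=\sum_{\mu\vdash m,\ \ell(\mu)\leq 3}q^{m}(1-q^{-1})^{\ell(\mu)-1}m_{\mu}(z_{1},z_{2},z_{3})$, a genuine polynomial in the $z_{i}$ (it is $q^{m}$ times the one-row Hall--Littlewood polynomial $P_{(m)}(z;q^{-1})$); this is the form used below. The eigenvalue of $\mathcal{A}_{m}$ on this eigenspace is $\phi_{m}+\overline{\phi_{m}}=2\operatorname{Re}\phi_{m}$, since $T_{m}^{2}$ acts by the same formula with $z_{i}$ replaced by $z_{i}^{-1}$, and tempered parameters satisfy $z_{i}^{-1}=\overline{z_{i}}$.

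It remains to run over the spectral decomposition of $L^{2}(X(0))$. For the trivial part, $X$ connected and $A_{1}f=\pm(q^{2}+q+1)f$ force $f$ into the span of the color-class indicators (Perron--Frobenius for the color-one digraph, whose strong connectivity and periodicity are governed by tripartiteness): only the constant $\one$ if $X$ is non-tripartite, and $\one$ together with the functions $v\mapsto\omega^{\pm\col(v)}$, $\omega=e^{2\pi i/3}$, if $X$ is tripartite---with Satake parameters $(\omega^{j},q\omega^{j},q^{-1}\omega^{j})$ for $j=0$ (all cases) and $j=1,2$ (tripartite case). Substituting into the formula for $\phi_{m}$ (the apparent poles $z_{i}-z_{j}$ cancel against the numerator zeros $z_{i}-q^{-1}z_{j}$, or one just uses the monomial form) gives $\phi_{m}=\omega^{jm}(q^{2}+q+1)q^{2(m-1)}$, hence $\mathcal{A}_{m}$-eigenvalue $\phi_{m}+\overline{\phi_{m}}=k\cos(2\pi jm/3)$; this is $k$ for $j=0$, and for $j=1,2$ it is $k$ if $3\mid m$ and $-k/2$ otherwise, yielding the trivial spectrum $\{k\}$, $\{k,-k/2,-k/2\}$, or $\{k,k,k\}$ in the three cases. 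For the nontrivial part, the Ramanujan hypothesis places the $A_{1}$- and $A_{2}$-eigenvalues in the building spectrum; since $A_{2}=A_{1}^{*}$ and $A_{1}$ is normal one gets $e_{2}=\overline{e_{1}}$, $e_{3}=1$, and $e_{1}\in\{w_{1}+w_{2}+w_{3}\mid|w_{i}|=1,\ w_{1}w_{2}w_{3}=1\}$, so $\{z_{1},z_{2},z_{3}\}$ is the root set of $\lambda^{3}-e_{1}\lambda^{2}+\overline{e_{1}}\lambda-1$, the very cubic that witnesses $e_{1}$ in this deltoid---whence $|z_{i}|=1$. Then $|m_{\mu}(z)|\leq m_{\mu}(1,1,1)$ for $|z_{i}|=1$ together with $(1-q^{-1})^{\ell-1}>0$ give $|\phi_{m}(z)|\leq\phi_{m}(1,1,1)$, so $\lambda^{(m)}\leq2\phi_{m}(1,1,1)$; and a short computation ($P_{(m)}(1,1,1;t)=3+3(m-1)(1-t)+\binom{m-1}{2}(1-t)^{2}$, then $2\phi_{m}(1,1,1)=2q^{m}P_{(m)}(1,1,1;q^{-1})$) identifies this with $(m^{2}+3m+2)q^{m}-2(m^{2}-1)q^{m-1}+(m^{2}-3m+2)q^{m-2}$. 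Since $k\geq q^{2m}$, dividing gives $\lambda^{(m)}/k\leq(m^{2}+3m+2)/q^{m}$.

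The main obstacle is the second step: pinning down $\phi_{m}$. Whether one argues via the Satake isomorphism plus the Hall--Littlewood formula for a one-row partition, or derives the recursion $T_{m}^{1}=A_{1}T_{m-1}^{1}-qA_{2}T_{m-2}^{1}+q^{3}T_{m-3}^{1}$ by hand, one must carefully keep track of the auxiliary double cosets (such as $K\diag(\pi^{m},\pi,1)K$) produced when a geodesic operator is multiplied by $A_{1}$ or $A_{2}$; this $PGL_{3}$ bookkeeping is the only serious technical point. The temperedness remark in the last step also deserves a careful statement, since the $A_{1}$-spectrum condition by itself does not force the whole Satake parameter onto the unit torus.
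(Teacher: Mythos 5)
Your proof is correct and reaches the same final formulas, but it substitutes a genuinely different argument at the two places where the paper leans on representation theory. The paper passes through the decomposition $L^{2}(\Gamma\backslash G)=\widehat{\bigoplus}V_{i}$, uses temperedness of the infinite-dimensional $V_{i}$ to conclude $|s_{i}|=1$, and then bounds $|\mu_{i}|=|q^{m}P_{[m,0,0]}(s;q^{-1})|$ by majorizing the normalized spherical function by Harish-Chandra's $\Xi$-function (which is the spherical function at $s=(1,1,1)$ and is positive). You instead (a) recover $|z_{i}|=1$ directly from the paper's combinatorial Ramanujan definition: the normal operator $A_{1}$ has $e_{1}$ in the deltoid, normality plus $A_{2}=A_{1}^{*}$ gives $e_{2}=\overline{e_{1}}$ and $e_{3}=1$, so the elementary-symmetric data pins down $\{z_{i}\}$ as the root set of $\lambda^{3}-e_{1}\lambda^{2}+\overline{e_{1}}\lambda-1$, which is also the root set of a known unimodular triple; and (b) replace the $\Xi$-majorization by the monomial expansion $P_{(m)}(z;t)=\sum_{\mu\vdash m,\ell(\mu)\le3}(1-t)^{\ell(\mu)-1}m_{\mu}(z)$, whose coefficients are positive for $0<t<1$, so that $|P_{(m)}(z;t)|\le P_{(m)}(1,1,1;t)$ is an immediate triangle inequality on the unit torus. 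Your route is more elementary and self-contained (it avoids the discussion of tempered matrix coefficients entirely, and the positivity you need is a transparent combinatorial fact about Hall--Littlewood polynomials rather than a property of $\Xi$), at the small cost of needing the explicit monomial expansion; the paper's route is shorter to state if one already has the $\Xi$-majorization available. The recursion $T_{m}^{1}=A_{1}T_{m-1}^{1}-qA_{2}T_{m-2}^{1}+q^{3}T_{m-3}^{1}$ and the generating function you record are correct and consistent with the Satake/Hall--Littlewood computation, though as you note they are not strictly needed once you invoke Macdonald's formula. Two small inaccuracies worth fixing in a write-up: the trivial $A_{1}$-eigenvalues on a tripartite $X$ are $\omega^{j}(q^{2}+q+1)$ for $j\in\{0,1,2\}$, not $\pm(q^{2}+q+1)$ (your subsequent computation uses the right values, so this is only a slip in the prose); and the cleanest way to get that $T_{m}^{2}$ acts by $\overline{\phi_{m}}$ is simply that $T_{m}^{2}=(T_{m}^{1})^{*}$ and the Hecke algebra is commutative --- that works uniformly for the trivial and nontrivial parts, whereas the ``replace $z_{i}$ by $z_{i}^{-1}$ and use $z_{i}^{-1}=\overline{z_{i}}$'' argument only applies verbatim on the unit torus.
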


\begin{proof}
The degree of $\mathcal{A}_{m}$ was already computed in the proof
of Theorem \ref{thm:spec-r-links}. We recall the notion of a \emph{Hecke
operator }on $\mathcal{B}$, which is a $G$-invariant operator $A$
on $L^{2}\left(\mathcal{B}(0)\right)$ such that $A\left(\one_{v}\right)$
has compact support for (any) $v\in V$. Such an operator induces
an action on quotients of $\mathcal{B}$, and we denote $A$ acting
on $X=\Gamma\backslash\mathcal{B}$ by $A\left(X\right)$. As $\Gamma$
is cocompact, $L^{2}\left(\Gamma\backslash G\right)$ decomposes as
a Hilbert sum of irreducible $G$-representations, $L^{2}\left(\Gamma\backslash G\right)=\widehat{\bigoplus}_{i}V_{i}$.
Letting $v_{0}=K\in\mathcal{B}(0)$, $\Gamma g\mapsto\Gamma gv_{0}$
gives an identification $X(0)\cong\Gamma\backslash G/K$ and thus
$L^{2}\left(X(0)\right)\cong L^{2}\left(\Gamma\backslash G\right)^{K}=\bigoplus_{i}V_{i}^{K}$,
where $V^{K}$ is the space of $K$-fixed vectors in $V$. It is well
known (see e.g.\ \cite{satake1966spherical,Lubotzky2005a} or \cite[§V]{macdonald1979symmetric})
that for every $i$ either $V_{i}^{K}=0$ or $V_{i}^{K}=\mathbb{C}\cdot f_{i}$,
where $f_{i}$ is a common eigenfunction of all Hecke operators on
$L^{2}\left(X(0)\right)$. By \cite{li2004ramanujan,Lubotzky2005a},
$X$ is a Ramanujan complex if each $V_{i}$ with $V_{i}^{K}\neq0$
is either finite-dimensional or \emph{tempered}, namely,\emph{ }the
matrix coefficient $\varphi_{i}\left(g\right)=\int_{\Gamma\backslash G}f_{i}\left(gx\right)\overline{f_{i}\left(x\right)}dx$
satisfies $\varphi_{i}\in\bigcap_{\varepsilon>0}L^{2+\varepsilon}\left(G\right)$.
The finite-dimensional representations account for the trivial eigenvalues,
so as $X$ is Ramanujan every nontrivial eigenvalue $\lambda_{i}$
of $\mathcal{A}_{m}$ comes from a tempered $V_{i}$. As $\varphi_{i}$
is a bi-$K$-spherical function on $G$, it can be interpreted as
a $K$-spherical function on $\mathcal{B}(0)$, which is also a $\lambda_{i}$-eigenfunction
for $\mathcal{A}_{m}$. In the language of \cite[§V.3]{macdonald1979symmetric}
its normalization $\frac{\varphi_{i}}{\varphi_{i}\left(v_{0}\right)}$
is a \emph{spherical function on $G$ relative to $K$}, and we denote
by $s_{1},s_{2},s_{3}$ its Satake parameters. Let $\mathcal{A}_{m}^{\pm}$
be the Hecke operators 
\[
\left(\mathcal{A}_{m}^{+}f\right)\left(v\right)=\sum\nolimits _{w\in S_{m}^{1}\left(v\right)}f\left(w\right),\qquad\left(\mathcal{A}_{m}^{-}f\right)\left(v\right)=\sum\nolimits _{w\in S_{m}^{2}\left(v\right)}f\left(w\right),
\]
and observe that $\mathcal{A}_{m}=\mathcal{A}_{m}^{+}+\mathcal{A}_{m}^{-}=\mathcal{A}_{m}^{+}+\left(\mathcal{A}_{m}^{+}\right)^{*}$.
The algebra of all Hecke operators is commutative, so that $\mathcal{A}_{m}^{+}$
commutes with $\left(\mathcal{A}_{m}^{+}\right)^{*}$, and denoting
by $\mu_{i}$ the $\mathcal{A}_{m}^{+}$-eigenvalue of $f_{i}$ (and
$\varphi_{i}$), we obtain $\lambda_{i}=\mu_{i}+\overline{\mu_{i}}$.
Under the identification of $L^{\infty}\left(\mathcal{B}(0)\right)$
with $L^{\infty}\left(G\right)^{K}$, the operator $\mathcal{A}_{m}^{+}$
is given by convolution with the characteristic function of $K\cdot\diag\left(\pi^{m},1,1\right)\cdot K$.
By \cite[§V.3]{macdonald1979symmetric}, $\mu_{i}$ is obtained by
specialization of the Hall-Littlewood polynomial corresponding to
the partition $m=m+0+0$:
\[
\mu_{i}=q^{m}P_{\left[m,0,0\right]}\left(s_{1},s_{2},s_{3};\tfrac{1}{q}\right)=q^{m}\sum_{\sigma\in S_{3}/S_{3}^{\left[m,0,0\right]}}\sigma\left(x_{1}^{m}\cdot\frac{x_{1}-x_{2}/q}{x_{1}-x_{2}}\cdot\frac{x_{1}-x_{3}/q}{x_{1}-x_{3}}\right)\bigg|_{x_{i}=s_{i}\ \left(i=1..3\right)};
\]
this uses \cite[§III.2 (2.2)]{macdonald1979symmetric}, where $S_{3}$
acts by permuting $\left\{ x_{1},x_{2},x_{3}\right\} $, and $S_{3}^{\left[m,0,0\right]}$
is the stabilizer of the partition $\left[m,0,0\right]$.  Being
tempered, $\frac{\varphi_{i}}{\varphi_{i}\left(v_{0}\right)}$ is
majorized by Harish-Chandra's $\Xi$-function (see e.g.\ \cite[Thm. 2]{Haagerup1988});
this is the spherical function with Satake parameters $s_{1}=s_{2}=s_{3}=1$,
and it is positive, so that 
\begin{align*}
\left|\mu_{i}\right| & =\left|\left(\mathcal{A}_{m}^{+}\frac{\varphi_{i}}{\varphi_{i}\left(v_{0}\right)}\right)\left(v_{0}\right)\right|\leq\sum_{w\in S_{m}^{+}\left(v_{0}\right)}\left|\frac{\varphi_{i}\left(w\right)}{\varphi_{i}\left(v_{0}\right)}\right|\leq\sum_{w\in S_{m}^{+}\left(v_{0}\right)}\Xi\left(w\right)=\left(\mathcal{A}_{m}^{+}\Xi\right)\left(v_{0}\right)\\
 & =q^{m}P_{\left[m,0,0\right]}\left(1,1,1;\tfrac{1}{q}\right).
\end{align*}
Direct computation then gives
\begin{multline*}
P_{\left[m,0,0\right]}\left(1,1,1;\tfrac{1}{q}\right)=\tfrac{1}{(x_{{1}}-1)(x_{{2}}-1)(x_{{1}}-x_{{2}})}\Big[((x_{{2}}^{2}-x_{{2}})x_{{1}}^{m}+x_{{1}}((1-x_{{1}})x_{{2}}^{m}+x_{{2}}(x_{{1}}-x_{{2}})))q^{m-2}\\
+((1-x_{{2}}^{2})x_{{1}}^{m+1}-(1-x_{{1}}^{2})x_{{2}}^{m+1}-x_{{1}}^{2}+x_{{2}}^{2})q^{m-1}+((x_{{2}}-1)x_{{1}}^{m+2}-(x_{{1}}-1)x_{{2}}^{m+2}+x_{{1}}-x_{{2}}){q}^{m}\Big]\bigg|_{x_{1}=x_{2}=1}\\
=\tfrac{1}{(x_{{1}}-1)^{2}}\Big[(x_{{1}}^{m}+(-m+1)x_{{1}}^{2}+(m-2)x_{{1}}){q}^{m-2}+(-2x_{{1}}^{m+1}+(m+1)x_{{1}}^{2}+1-m){q}^{m-1}\qquad\qquad\\
-{q}^{m}(-x_{{1}}^{m+2}+(m+2)x_{{1}}-m-1)\Big]\bigg|_{x_{1}=1}=\tfrac{1}{2}[(m^{2}+3m+2)q^{m}-2(m^{2}-1)q^{m-1}+(m^{2}-3m+2)q^{m-2}],
\end{multline*}
and the bound for $\lambda^{(m)}$ follows from $\lambda_{i}=\mu_{i}+\overline{\mu_{i}}$.
The finite-dimensional representations of $G$ are $\rho_{j}:g\mapsto\omega^{j\ord_{\pi}\det\left(g\right)}$
($\omega=e^{2\pi i/3}$), with $\rho_{0}$ being the trivial representation
(which appears once in $L^{2}\left(\Gamma\backslash G\right)$, and
$\rho_{1},\rho_{2}$ appearing in $L^{2}\left(\Gamma\backslash G\right)$
(each once) iff $X$ is tri-partite. The eigenvalue of $\mathcal{A}_{m}^{+}$
on $\rho_{j}$ can be computed by graph theory (observing that $\mathcal{A}_{m}^{+}$
shifts colors by $m$), or using its Satake parameters: $P_{\left[m,0,0\right]}\left(\frac{\omega^{j}}{q},\omega^{j},\omega^{j}q;\tfrac{1}{q}\right)=\omega^{jm}\left(q^{2}+q+1\right)q^{2\left(m-1\right)}$,
and the trivial eigenvalues of $\mathcal{A}_{m}$ are deduced as before.
\end{proof}
We return to the expansion of $G^{\left(r\right)}$:
\begin{prop}
\label{prop:vert-vs-geod}If $X$ is a non-tripartite Ramanujan $\At_{2}$-complex
of density $q$ then the Perron-Frobenius eigenvalue of $G^{(r)}=G^{(r)}(X)$
satisfies $\lambda_{1}\left(A_{G^{\left(r\right)}}\right)=rq^{r}\left(1+o\left(1\right)\right)$
(as $q,r\rightarrow\infty$), and its second eigenvalue satisfies
$\lambda_{2}\left(A_{G^{\left(r\right)}}\right)\leq\sqrt{r}q^{r}\left(1+o\left(1\right)\right)$.
\end{prop}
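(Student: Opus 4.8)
My plan is to realize the adjacency operator $A_{G^{(r)}}$ through a single self-adjoint operator on $L^{2}(X(0))$ that is an explicit polynomial combination of the geodesic Hecke operators $\mathcal{A}_{m}$ from Theorem~\ref{thm:spec-geodesics}, and then read off the two top eigenvalues from the spectral data recorded there. Concretely, I write $A_{G^{(r)}}$ in bipartite block form $\left(\begin{smallmatrix}0 & B^{*}\\ B & 0\end{smallmatrix}\right)$, where $B$ sends a function $g$ on the $r$-geodesics of $X$ to the function $v\mapsto\sum_{\gamma\ni v}g(\gamma)$ on $X(0)$; then the nonzero spectrum of $A_{G^{(r)}}$ is $\{\pm\sqrt{\mu}\}$, one pair for each positive eigenvalue $\mu$ of $M:=BB^{*}$ on $L^{2}(X(0))$ (with equal multiplicities), so I only need the two largest eigenvalues of $M$. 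I would first record that each $r$-geodesic of $X$ visits $r+1$ distinct vertices, as it lifts to an $r$-geodesic of $\mathcal{B}$, which is a simple path by Lemma~\ref{lem:trans-geodes} and Example~\ref{exa:standard-geod}; hence $G^{(r)}$ is biregular. (A closed or self-intersecting geodesic downstairs comes from a ``short'' element of $\Gamma$ and contributes only a lower-order correction to $M$, absorbed by the $(1+o(1))$.)

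\emph{The key identity.} The main computation is to evaluate $(Mf)(v)=\sum_{\gamma\ni v}\sum_{w\in\gamma}f(w)$ by stratifying over the signed distance $m$ between $v$ and $w$ along $\gamma$. With $v=\gamma(i)$, the vertex $\gamma(i+m)$ (for $m>0$) is the far endpoint of the color-one $m$-geodesic $\gamma^{1}$ that $\gamma$ traces out from $v$ (unique given its endpoints, by Lemma~\ref{lem:geodesics}), while the vertices $\gamma(i-m)$ arise symmetrically from color-two $m$-geodesics. Given such a $\gamma^{1}$, the geodesics $\gamma$ through $v$ that realize it decompose uniquely as a color-two $i$-geodesic from $v$, followed by $\gamma^{1}$, followed by a color-one $(r-i-m)$-geodesic from $\mathrm{end}(\gamma^{1})$, subject to one ``seam'' condition at each end of $\gamma^{1}$ --- each seam asking that a certain color-two neighbour and color-one neighbour of that endpoint be non-incident in its link, which (the link being $\mathbb{P}^{2}\mathbb{F}_{q}$) rules out exactly $q+1$ of the $q^{2}+q+1$ neighbours on the relevant side. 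Since every color edge of $\mathcal{B}$ has exactly $q^{2}$ geodesic continuations, the number of admissible backward and forward extensions is $q^{2i}\cdot q^{2(r-i-m)}=q^{2(r-m)}$, independently of both $i$ and $v$, and the number of color-one $m$-geodesics issuing from any vertex is $(q^{2}+q+1)q^{2(m-1)}$. Summing over the $r-m+1$ admissible positions $i$, using $\mathcal{A}_{m}=\mathcal{A}_{m}^{+}+\mathcal{A}_{m}^{-}$, and treating the stratum $w=v$ separately, one obtains
\[
M=(r+1)(q^{2}+q+1)q^{2(r-1)}\cdot I+\sum_{m=1}^{r}(r-m+1)\,q^{2(r-m)}\,\mathcal{A}_{m},
\]
the first term being exactly the (constant) degree of an $X(0)$-vertex in $G^{(r)}$.

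\emph{Reading off the eigenvalues.} The $\mathcal{A}_{m}$ commute and are simultaneously diagonalized by the common Hecke eigenfunctions, so every eigenvalue of $M$ has the form $(r+1)(q^{2}+q+1)q^{2(r-1)}+\sum_{m=1}^{r}(r-m+1)q^{2(r-m)}\lambda_{m}$ for a compatible tuple $(\lambda_{m})$. On the trivial eigenfunction $\lambda_{m}=2(q^{2}+q+1)q^{2(m-1)}$, so $q^{2(r-m)}\lambda_{m}$ is constant in $m$, and $\sum_{m=1}^{r}(r-m+1)=\binom{r+1}{2}$ gives the Perron value $(r+1)^{2}(q^{2}+q+1)q^{2(r-1)}$; hence $\lambda_{1}(A_{G^{(r)}})=(r+1)q^{r-1}\sqrt{q^{2}+q+1}=rq^{r}(1+o(1))$. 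Since $X$ is non-tripartite, the trivial representation is the only finite-dimensional constituent of $L^{2}(\Gamma\backslash G)$, so every other eigenfunction of $M$ is a tempered Hecke eigenfunction, and Theorem~\ref{thm:spec-geodesics} bounds its $\mathcal{A}_{m}$-eigenvalue by $|\lambda_{m}|\le\lambda^{(m)}\le\frac{m^{2}+3m+2}{q^{m}}\cdot 2(q^{2}+q+1)q^{2(m-1)}$. Inserting this,
\[
\lambda_{2}(A_{G^{(r)}})^{2}\le(q^{2}+q+1)q^{2r-2}\Bigl[(r+1)+2\sum_{m=1}^{r}(r-m+1)(m^{2}+3m+2)q^{-m}\Bigr];
\]
since $\sum_{m\ge1}(m^{2}+3m+2)q^{-m}=O(1/q)$, the bracket is $(r+1)(1+o(1))$ and the prefactor is $q^{2r}(1+o(1))$, so $\lambda_{2}(A_{G^{(r)}})\le\sqrt{r}\,q^{r}(1+o(1))$.

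The hard part is the bookkeeping in the second step: verifying that the two seam conditions at the ends of $\gamma^{1}$ are genuinely independent, and that the count of backward/forward extensions collapses to the same $q^{2(r-m)}$ for every position $i$ and every basepoint $v$, so that the a priori unwieldy stratified sum really does organize itself into the clean combination of the $\mathcal{A}_{m}$ above. Once that operator identity is in hand, the rest is a routine manipulation of the estimates of Theorem~\ref{thm:spec-geodesics}.
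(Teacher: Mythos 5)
Your proposal is correct and follows essentially the same route as the paper: you reduce to the vertex-side square $M=BB^{*}$, express it as $M=\sum_{m=0}^{r}N_{m}^{(r)}\mathcal{A}_{m}$ with the same coefficients $N_{m}^{(r)}=(r-m+1)q^{2(r-m)}$ (and $N_{0}^{(r)}$ the degree), and then read off $\lambda_{1}$ on constants and bound $\lambda_{2}$ by feeding the tempered bound from Theorem~\ref{thm:spec-geodesics} into the sum, exactly as the paper does. The only differences are cosmetic: you spell out the stratified count behind the operator identity (which the paper states directly), and you phrase the passage to $\lambda_{2}$ via simultaneous diagonalization of the commuting Hecke operators rather than the triangle inequality for self-adjoint operators, but the estimates and conclusion coincide.
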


\begin{proof}
Since $G^{\left(r\right)}$ is bipartite, its nonzero spectrum is
obtained as $\left\{ \pm\sqrt{\lambda}\right\} $ where $\lambda$
runs over the nonzero eigenvalues of $A_{G^{\left(r\right)}}^{2}$
restricted to either side of $G^{(r)}$. For any vertex $v$, we have
\begin{align*}
\lambda_{1}\left(A_{G^{\left(r\right)}}^{2}\right) & =\#\left\{ {r\text{-geodesics}\atop \text{containing }v}\right\} \cdot\#\left\{ {\text{vertices \text{contained}}\atop \text{in an \ensuremath{r}-geodesic}}\right\} \\
 & =\left(r+1\right)\left(q^{2}+q+1\right)q^{2\left(r-1\right)}\cdot\left(r+1\right)=r^{2}q^{2r}\left(1+o(1)\right).
\end{align*}
Next, we observe that 
\[
N_{m}^{\left(r\right)}:=\#\left\{ {r\text{-geodesics containing}\atop v,w\text{ whenever \ensuremath{w\in S_{m}^{1}\left(v\right)}}}\right\} =\begin{cases}
\left(r+1\right)\left(q^{2}+q+1\right)q^{2\left(r-1\right)} & m=0\\
\left(r-m+1\right)q^{2\left(r-m\right)} & 0<m\leq r,
\end{cases}
\]
and that on the vertex side $A^{2}$ can be described using $\mathcal{A}_{m}$:
\[
A_{G^{\left(r\right)}}^{2}\big|_{{\text{vertex}\atop \text{side}}}=\sum\nolimits _{m=0}^{r}N_{m}^{\left(r\right)}\mathcal{A}_{m}.
\]
We observe that all $\mathcal{A}_{m}$ have a unique trivial eigenvalue,
obtained on the constant functions. This shows another way to compute
$\lambda_{1}\left(A_{G^{\left(r\right)}}^{2}\right)$, as $\sum_{m=0}^{r}N_{m}^{\left(r\right)}\deg(\mathcal{A}_{m})$.
More importantly, since all $\mathcal{A}_{m}$ are self-adjoint this
gives $\lambda_{2}(A_{G^{\left(r\right)}}^{2})\leq\sum_{m=0}^{r}N_{m}^{\left(r\right)}\lambda_{2}(\mathcal{A}_{m})$,
and we recall that $\lambda_{2}(\mathcal{A}_{m})\leq m^{2}q^{m}(1+o(1))$
for $m\geq1$ by Theorem \ref{thm:spec-geodesics}. In addition $\lambda_{2}(\mathcal{A}_{0})=\lambda_{2}\left(I\right)=1$,
so that 
\begin{alignat*}{2}
\lambda_{2}\left(A_{G^{\left(r\right)}}^{2}\right) & \leq\left(r+1\right)\left(q^{2}+q+1\right)q^{2\left(r-1\right)}\left(1+o(1)\right)\\
 & \ \ \ +\sum\nolimits _{m=1}^{r}\left(r-m+1\right)q^{2\left(r-m\right)}\cdot m^{2}q^{m}\left(1+o(1)\right)\  & =rq^{2r}\left(1+o(1)\right).\qedhere
\end{alignat*}
\end{proof}

\section{\label{sec:Expansion-on-spheres}Spheres in $\widetilde{A}$-complexes}

In this section we show that $r$-spheres in $\At$-complexes (of
a fixed degree) do not form a family of expanders, and neither do
their $r$-powers. This shows that any power operation on $\At$-complexes
whose links are similar to these spheres or to $r$-paths in them,
do not form a family of high-dimensional expanders. We carry out the
analysis for dimension two, but it is evident that similar phenomena
occur in general dimension.
\begin{prop}
\label{prop:1-power-r-sphere}The $r$-th spheres around a vertex
in $\mathcal{B}=\mathcal{B}_{3}(F)$ are not a family of expanders.
\end{prop}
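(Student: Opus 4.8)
The plan is to show that for a fixed $F$ (hence a fixed residue order $q$) the $r$-sphere $S_{r}$ around $v_{0}:=[\mathcal{O}^{3}]$ --- the subgraph of $\mathcal{B}$ induced on the vertices at graph distance exactly $r$ from $v_{0}$ --- has a ``one-dimensional'', path-like combinatorial structure whose Cheeger constant $h(S_{r})$ tends to $0$ as $r\to\infty$. Since each $S_{r}$ is a subgraph of the $2(q^{2}+q+1)$-regular graph $\mathcal{B}$, the degrees are bounded independently of $r$ while $|S_{r}|\to\infty$, so $h(S_{r})\to 0$ is exactly the statement that $\{S_{r}\}_{r}$ is not a family of expanders.

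First I would stratify $S_{r}$ by $K$-orbits, where $K:=PGL_{3}(\mathcal{O})=\mathrm{Stab}(v_{0})$. By the Cartan (Smith normal form) decomposition, the $K$-orbits on $\mathcal{B}(0)$ are indexed by triples $\lambda_{1}\geq\lambda_{2}\geq\lambda_{3}=0$, the orbit of $[\diag(\pi^{\lambda_{1}},\pi^{\lambda_{2}},1)\mathcal{O}^{3}]$ consisting precisely of the vertices at distance $\lambda_{1}$ from $v_{0}$. The distance statement is a word-metric computation inside an apartment through $v_{0}$, identified with $\mathbb{Z}^{3}/\mathbb{Z}(1,1,1)$ and $v_{0}$ at the origin: there the six edge-directions are $\pm e_{1},\pm e_{2},\pm e_{3}$ (the color-two directions $e_{i}+e_{j}$ reduce to $-e_{k}$ since $e_{1}+e_{2}+e_{3}\equiv 0$), so the distance to the class of $(\nu_{1},\nu_{2},\nu_{3})$ is $\min_{k\in\mathbb{Z}}\sum_{i=1}^{3}|\nu_{i}+k|$, and for $(\lambda_{1},\lambda_{2},0)$ this minimum (attained at $k=-\lambda_{2}$) equals $\lambda_{1}$. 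Hence $S_{r}=\bigsqcup_{b=0}^{r}T_{b}$, where $T_{b}$ is the non-empty finite $K$-orbit of $[\diag(\pi^{r},\pi^{b},1)\mathcal{O}^{3}]$.

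The heart of the argument is the claim that every edge of $\mathcal{B}$ contained in $S_{r}$ joins two consecutive layers $T_{b},T_{b+1}$; in particular no edge lies inside a single $T_{b}$. To see this, take an edge $\{L,L'\}$ with $L\in T_{b}$ and choose an apartment containing $v_{0}$ and this edge; in its coordinates $L'=L+e_{j}$ for some $j$, so the type of $L'$ is the dominant representative of $\overline{(r,b,0)+e_{i}}$ for some $i\in\{1,2,3\}$, and the three possibilities are $(r+1,b,0)\in S_{r+1}$, $(r,b+1,0)\in T_{b+1}$, and $(r-1,b-1,0)\in S_{r-1}$ --- with a separate but entirely analogous check at the degenerate layers $b=0$ and $b=r$. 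Along the way one verifies that consecutive layers really are joined (e.g.\ $[\diag(\pi^{r},\pi^{b},1)\mathcal{O}^{3}]$ is adjacent to $[\diag(\pi^{r},\pi^{b+1},1)\mathcal{O}^{3}]$ when $b<r$), so $S_{r}$ is a connected blow-up of the path $P_{r+1}$.

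Finally, a Smith-normal-form count identifies $|T_{b}|$ with $\#\{M\leq\mathcal{O}^{3}:\mathcal{O}^{3}/M\cong\mathcal{O}/\pi^{r}\oplus\mathcal{O}/\pi^{b}\}$, which is of order $q^{2r}$ uniformly in $0\leq b\leq r$ (for instance $|T_{0}|=|T_{r}|=(q^{2}+q+1)q^{2r-2}$, and the remaining layers differ from this by a factor bounded in terms of $q$ alone). Taking $U=T_{0}\cup\dots\cup T_{\lfloor r/2\rfloor}$, every edge leaving $U$ joins $T_{\lfloor r/2\rfloor}$ to $T_{\lfloor r/2\rfloor+1}$, whence $|\partial U|\leq 2(q^{2}+q+1)|T_{\lfloor r/2\rfloor}|$ while $\min(|U|,|S_{r}\setminus U|)$ is of order $r\cdot q^{2r}$, so $h(S_{r})=O(1/r)\to 0$. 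I expect the main obstacle to be the bookkeeping rather than any conceptual difficulty: the layer-transition analysis must be handled carefully at the extreme layers, and one needs the routine but slightly tedious lattice counting to know that all $T_{b}$ are comparable in size. An alternative endgame that avoids the size computation: since $K$ acts transitively on each $T_{b}$, the space of functions on $S_{r}$ that are constant on layers is invariant under the normalized adjacency operator, and the operator it induces is a nearest-neighbour (birth--death) operator on $\{0,\dots,r\}$, whose spectral gap is of order $1/r^{2}$; hence the second normalized eigenvalue of $S_{r}$ tends to $1$, again contradicting uniform expansion.
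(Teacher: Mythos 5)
Your proposal is correct, and it takes a genuinely different route from the paper's proof. The paper stratifies $S_r$ by \emph{Iwahori} ($B$-)orbits, writing $S_r=\bigsqcup_{\max(a,b,c)=r}X_{a,b,c}$, computes $|X_{a,b,c}|$ explicitly via Weyl lengths, and exhibits a geometric ``half-hexagon'' cut $A$ corresponding to half of the Euclidean $r$-hexagon in the fundamental apartment, obtaining the clean bound $\phi(S_r)<1/r$. You instead stratify by \emph{Cartan} ($K$-)orbits, yielding exactly $r+1$ layers $T_0,\dots,T_r$; your key structural lemma --- that any edge of $\mathcal{B}$ between two vertices of $S_r$ joins consecutive layers, proved by placing $v_0$ and the edge in a common apartment --- makes $S_r$ a blow-up of a path, from which a middle cut immediately gives $h(S_r)=O(1/r)$. (Each $T_b$ is a disjoint union of the paper's $X_{a,b,c}$'s over the Weyl orbit, so the two size counts agree: indeed $|T_0|=|T_r|=(q^2+q+1)q^{2r-2}$ and $|T_b|=(q+1)(q^2+q+1)q^{2r-3}$ for $0<b<r$, all $\Theta_q(q^{2r})$.) Your approach buys conceptual transparency (the one-dimensional layer structure is explicit) and needs only order-of-magnitude size estimates rather than the exact Weyl-length formula; the paper's approach buys a sharper explicit constant and a cut tied to visible apartment geometry. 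Your alternative endgame --- projecting onto the $K$-invariant (layer-constant) functions to get a birth--death chain on $\{0,\dots,r\}$ with gap $O(1/r^2)$ --- is sound because $K$ acts transitively on each $T_b$ so this subspace is genuinely invariant under the adjacency operator, and it actually anticipates the sharper $1-\lambda_{(r)}=O(1/r^2)$ bound that the paper only establishes in the follow-up Proposition \ref{prop:r-power-r-sphere} by a different (test-function) argument.
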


\begin{proof}
This can be deduced from the spectral analysis in Proposition \ref{prop:r-power-r-sphere},
but we prefer to show how the geometry of the building gives an explicit
sparse cut in the $r$-sphere $S_{r}$ around a vertex. Let $F,\mathcal{O},\pi,q,G,K$
be as in §\ref{subsec:Expansion-between-vertices-and-geod}, and denote
\[
T=\left\{ \diag(\pi^{a},\pi^{b},\pi^{c})\,\middle|\,a,b,c\in\mathbb{N},\min\left(a,b,c\right)=0\right\} .
\]
The subcomplex induced by the vertices $\left\{ tv_{0}\,\middle|\,t\in T\right\} $
(where $v_{0}=K\in\mathcal{B}\left(0\right)$) is a triangular tiling
of the Euclidean plane, called the \emph{fundamental apartment} of
$\mathcal{B}$. There is a simplicial retraction from $\mathcal{B}$
to this apartment, which corresponds to a decomposition $G=\bigsqcup_{t\in T}BtK$;
here $B$ is the Iwahori group in $PGL_{3}\left(F\right)$, which
is the subgroup of elements in $K$ with subdiagonal entries in $\pi\mathcal{O}$.
In particular, each vertex in $\mathcal{B}$ lies in $X_{a,b,c}:=Btv_{0}$
for a unique $t=\diag(\pi^{a},\pi^{b},\pi^{c})\in T$. The $r$-sphere
around $v_{0}$ is the preimage of the $r$-sphere in the fundamental
apartment, which is a Euclidean hexagon: $S_{r}=\bigsqcup_{max(a,b,c)=r}X_{a,b,c}$
(see \cite[§3.2]{Evra2018RamanujancomplexesGolden}). 

The size of $X_{a,b,c}$ can be determined by computing Weyl lengths
\cite[§6.2]{Garrett1997}:
\[
\left|X_{a,b,c}\right|=\begin{cases}
q^{2\max\left(a,b,c\right)} & a\geq b\geq c\\
q^{2\max\left(a,b,c\right)-1} & a\geq c>b\text{ or }b>a\geq c\\
q^{2\max\left(a,b,c\right)-2} & b\geq c>a\text{ or }c>a\geq b\\
q^{2\max\left(a,b,c\right)-3} & c>b>a,
\end{cases}
\]
so that for $r\geq1$
\[
\left|S_{r}\right|=q^{2r-3}\left(qr+q+r-1\right)\left(q^{2}+q+1\right)\approx\left(r+1\right)q^{2r}.
\]
Finally, the degrees of vertices in $S_{r}$ (for $r\geq1$) are 
\[
\deg\left(v\in X_{a,b,c}\right)=\begin{cases}
q+1 & \left|\left\{ a,b,c\right\} \right|=2\\
2q & \left|\left\{ a,b,c\right\} \right|=3.
\end{cases}
\]
Assume for simplicity that $r$ is odd and larger than one (the computations
are similar in the even case), and let $A\subseteq S_{r}$ be the
half sphere
\[
A=\bigsqcup\left\{ X_{a,b,c}\,\middle|\,\max\left(a,b,c\right)=r\text{ and }\left({a\geq b\geq\frac{r+1}{2},\ b>a\geq c,\atop b\geq c>a,\text{ or }c>b\geq\frac{r+1}{2}}\right)\right\} .
\]
All edges crossing from $A$ to $S_{r}\backslash A$ connect either
$X_{r,\frac{r+1}{2},0}$ with $X_{r,\frac{r-1}{2},0}$, or $X_{0,\frac{r+1}{2},r}$
with $X_{0,\frac{r-1}{2},r}$. Each vertex in $X_{r,\frac{r+1}{2},0}$
has $q$ neighbors in $X_{r,\frac{r-1}{2},0}$ and $q$ neighbors
in $X_{r,\frac{r+3}{2},0}$, and similarly in the other case, giving
\[
\phi\left(S_{r}\right)\leq\frac{\left|E\left(A,S_{r}\backslash A\right)\right|}{\sum_{v\in A}\deg v}=\frac{q\left[|X_{r,\frac{r+1}{2},0}|+|X_{0,\frac{r+1}{2},r}|\right]}{r\left(q^{2}+q+1\right)\left(q+1\right)q^{2r-2}}=\frac{q^{2}-q+1}{q^{2}+q+1}\cdot\frac{1}{r}<\frac{1}{r}
\]
where $\phi$ is the graph conductance (also known as the normalized
Cheeger constant).
\end{proof}
While $S_{r}$ do not form an expander family as $r\rightarrow\infty$,
it is more interesting to ask whether the $r$-th power of $S_{r}$
(as a graph) form together such a family, since when we take the $r$-sphere
as an $r$-link, we should also expect edges in this link to correspond
to $r$-paths. Denoting by $\lambda_{\left(r\right)}$ the second
normalized eigenvalue of $S_{r}$, we have from the computation above
and the discrete Cheeger inequality that $\lambda_{\left(r\right)}\geq1-2\phi\left(S_{r}\right)>1-\frac{2}{r}$,
so that potentially we might have $\lambda_{\left(r\right)}^{r}\overset{{\scriptscriptstyle r\rightarrow\infty}}{\longrightarrow}e^{-2}<1$.
With a finer analysis we can rule out this possibility:
\begin{prop}
\label{prop:r-power-r-sphere}The normalized second eigenvalue $\lambda_{\left(r\right)}$
of the $r$-th sphere $S_{r}\subseteq\mathcal{B}_{3}$ satisfies
\[
\lambda_{\left(r\right)}\geq\cos\left(\frac{2\pi}{r}\right)=1-\frac{2\pi^{2}}{r^{2}}+O\left(\frac{1}{r^{4}}\right).
\]
In particular, $\lambda_{\left(r\right)}^{r}\overset{{\scriptscriptstyle r\rightarrow\infty}}{\longrightarrow}1$,
so the $r$-power graphs of the $r$-spheres in $\mathcal{B}_{3}$
are not expanders.
\end{prop}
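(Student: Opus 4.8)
The plan is to produce an explicit smooth test function $f$ on $S_{r}$ that is orthogonal to the constants in the degree-weighted inner product and has Rayleigh quotient at least $\cos(2\pi/r)$; then $\lambda_{(r)}\geq\cos(2\pi/r)$ by the min-max characterisation of the second eigenvalue, and $\lambda_{(r)}^{r}\geq\cos(2\pi/r)^{r}\to1$ follows from $r\log\cos(2\pi/r)\to0$, so the $r$-step walks (equivalently the $r$-power graphs) on $S_{r}$ have vanishing spectral gap. As the discussion preceding the statement indicates, the job is to improve the crude $\lambda_{(r)}>1-2/r$ coming from Cheeger and the sparse cut in the proof of Proposition \ref{prop:1-power-r-sphere}: any sparse cut only gives $1-O(1/r)$, so a genuinely smooth test function is needed.

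First I would invoke the retraction picture of Proposition \ref{prop:1-power-r-sphere}: $S_{r}=\bigsqcup_{\max(a,b,c)=r}X_{a,b,c}$, where the $X_{a,b,c}$ are the fibres of the apartment retraction $\rho\colon\mathcal{B}\to\mathcal{A}$ lying over the $r$-sphere $H_{r}$ of the fundamental apartment, and $H_{r}$ is an induced $6r$-cycle (a Euclidean hexagon with $r$ edges per side). Write $H_{r}=\mathbb{Z}/6r\mathbb{Z}$ and let $\ell(v)\in\mathbb{Z}/6r\mathbb{Z}$ record the position of $\rho(v)$. Since $\rho$ is simplicial and $H_{r}$ is induced, every edge of $S_{r}$ is either \emph{internal} (both ends in one fibre) or \emph{external} (ends in consecutive fibres $X_{k},X_{k+1}$). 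From Proposition \ref{prop:1-power-r-sphere} I also read off $w_{k}:=|X_{k}|\cdot\deg(X_{k})$, the sum of degrees in fibre $k$, and note that it is constant along the interior of each of the six sides (the sizes depend only on the order type of $(a,b,c)$, and all side-interior vertices have degree $2q$).

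Now set $f(v)=\omega^{\ell(v)}$ with $\omega=e^{2\pi i/r}$ (well defined since $\omega^{6r}=1$; assume $r\geq2$). The heart of the argument is the orthogonality $\sum_{v}\deg(v)f(v)=\sum_{k}w_{k}\omega^{k}=0$. As $\omega^{k}$ depends only on $k\bmod r$, this sum equals $\sum_{j=0}^{r-1}\omega^{j}W_{j}$ with $W_{j}=\sum_{m=0}^{5}w_{j+mr}$; for $j\neq0$ the indices $j+mr$ pick out one interior vertex from each side, so $W_{j}$ is the sum of the six side-interior weights and is independent of $j$, while $W_{0}$ is the sum of the six corner weights, and a direct check with the explicit sizes shows that this equals the same quantity, namely $2(q+1)(q^{2}+q+1)q^{2r-2}$. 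Hence $W_{j}$ is constant and $\sum_{j=0}^{r-1}\omega^{j}W_{j}=0$. For the Rayleigh quotient I use the Dirichlet identity
\[
f^{*}D_{S_{r}}f-f^{*}A_{S_{r}}f=\sum\nolimits_{\{v,w\}\in E(S_{r})}|f(v)-f(w)|^{2}.
\]
Since $|f|\equiv1$ we get $f^{*}D_{S_{r}}f=\sum_{v}\deg v=2\,|E(S_{r})|$; internal edges contribute $0$ to the right-hand side and each external edge contributes $|1-\omega|^{2}=2\bigl(1-\cos\tfrac{2\pi}{r}\bigr)$, whence
\[
\frac{f^{*}A_{S_{r}}f}{f^{*}D_{S_{r}}f}=1-\Bigl(1-\cos\tfrac{2\pi}{r}\Bigr)\frac{\#\{\text{external edges}\}}{\#\{\text{all edges}\}}\ \geq\ \cos\tfrac{2\pi}{r}.
\]
Since $A_{S_{r}},D_{S_{r}}$ are real symmetric and $f\perp_{D}\mathbf{1}$ — equivalently $\operatorname{Re}f$ and $\operatorname{Im}f$ are real, each $\perp_{D}\mathbf 1$, and at least one of them has Rayleigh quotient $\geq$ that of $f$ — the min-max theorem gives $\lambda_{(r)}\geq\cos(2\pi/r)$, and the stated corollary on $\lambda_{(r)}^{r}$ follows.

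The main obstacle is precisely this orthogonality identity. The fibre weights vary by powers of $q$ from one side of the hexagon to the next and carry essentially no rotational symmetry, so the naive lowest cycle mode $e^{2\pi i\ell/(6r)}$ is \emph{not} orthogonal to $\mathbf 1$; one is forced down to the period-$r$ mode $\omega^{\ell}$ and has to verify the (slightly magical) coincidence that the total corner weight equals the total side-interior weight. Everything else — the internal/external dichotomy, the Dirichlet identity, the elementary limit $\cos(2\pi/r)^{r}\to1$ — is routine once Proposition \ref{prop:1-power-r-sphere} is available. One should also note that the inequality is only informative for large $r$: it is vacuous for $2\leq r\leq4$ (where $\cos(2\pi/r)\leq0$) and the construction degenerates at $r=1$, but this is immaterial for the conclusion about the $r$-power graphs.
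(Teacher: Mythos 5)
Your proof is correct but takes a genuinely different route from the paper's. The paper uses a \emph{real} test function $f(v)=\sin(2\pi j/r)$ supported on a \emph{single} side of the hexagon (on $X_{r,j,0}$ for $0\le j\le r$, zero elsewhere), where $|X_{r,j,0}|=q^{2r}$ and $\deg v=2q$ are constant; this gives the Rayleigh quotient \emph{exactly} equal to $\cos(2\pi/r)$ via a one-dimensional cosine identity, and orthogonality to the Perron--Frobenius vector comes for free from the building involution $g\mapsto J(g^{t})^{-1}J$, which swaps $X_{r,j,0}\leftrightarrow X_{r,r-j,0}$ and makes $f$ antisymmetric while the top eigenvector is symmetric. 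You instead use the complex exponential $\omega^{\ell(v)}$, $\omega=e^{2\pi i/r}$, on the \emph{whole} hexagon, which forces you to (i) verify $D$-orthogonality by hand via the weight identity $\sum_{k}w_{k}\omega^{k}=0$, and (ii) fall back on a Dirichlet-energy inequality (internal/external dichotomy) rather than an exact eigenvalue computation. The identity in (i) does check out: against the fibre-size table of Prop.~\ref{prop:1-power-r-sphere}, both the six corner weights and the six per-$j$ side-interior weights sum to $2(q+1)(q^{2}+q+1)q^{2r-2}$ --- this is exactly the ``slightly magical coincidence'' you flag, and it is precisely what the paper's symmetry argument lets it avoid computing. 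Both proofs rest on the same structural inputs (simplicial retraction to the fundamental apartment, $H_{r}$ an induced $6r$-cycle so that edges of $S_{r}$ join only equal or consecutive fibres), and both degenerate for small $r$. The paper's version is shorter because the boundary condition $\sin(0)=\sin(2\pi)=0$ lets it ignore five of the six sides and the involution disposes of orthogonality in one stroke; your whole-hexagon exponential is heavier to set up but makes the role of the degree weights explicit, and is somewhat more robust in that it only uses the corner/side-interior balance rather than an automorphism of $\mathcal{B}_{3}$.
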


\begin{proof}
Let $A$ be the adjacency operator on $S_{r}$, and $M$ its symmetric
normalization $M=D^{-1/2}AD^{-1/2}$ (where $D$ is the diagonal operator
of degrees in $S_{r}$). Let $f\colon S_{r}\rightarrow\mathbb{R}$
be the function 
\[
f\left(v\right)=\begin{cases}
\sin\left(\frac{2\pi j}{r}\right) & v\in X_{r,j,0}\text{ with }0\leq j\leq r\\
0 & \text{otherwise},
\end{cases}
\]
for which 
\[
\left\langle f,f\right\rangle =\sum_{j=1}^{r-1}\left|X_{r,j,0}\right|\sin\left(\tfrac{2\pi j}{r}\right)^{2}=q^{2r}\sum_{j=1}^{r-1}\sin\left(\tfrac{2\pi j}{r}\right)^{2}=\frac{rq^{2r}}{2}.
\]
Since for $0<j<r$ any $x\in X_{r,j,0}$ has degree $2q$ with $q$
neighbors in each of $X_{r,j-1,0}$ and $X_{r,j+1,0}$, and $f$ vanishes
elsewhere, we have $D^{-1/2}f=\frac{f}{\sqrt{2q}}$, and 
\begin{align*}
\left\langle Mf,f\right\rangle  & =\left\langle AD^{-1/2}f,D^{-1/2}f\right\rangle =\frac{1}{2q}\left\langle Af,f\right\rangle \\
 & =\frac{1}{2q}\sum_{j=1}^{r-1}\left|X_{r,j,0}\right|\left(q\sin\left(\tfrac{2\pi(j-1)}{r}\right)+q\sin\left(\tfrac{2\pi(j+1)}{r}\right)\right)\sin\left(\tfrac{2\pi j}{r}\right)=\frac{rq^{2r}}{2}\cos\left(\frac{2\pi}{r}\right).
\end{align*}
The involution $\tau:g\mapsto\left(\begin{smallmatrix} &  & 1\\
 & 1\\
1
\end{smallmatrix}\right)\left(g^{t}\right)^{-1}\left(\begin{smallmatrix} &  & 1\\
 & 1\\
1
\end{smallmatrix}\right)$ of $PGL_{3}$ induces an automorphism $\tau$ of $\mathcal{B}_{3}$
which restricts to $S_{r}$ and interchanges $X_{r,j,0}$ and $X_{r,r-j,0}$.
Since $f$ is $\tau$-antisymmetric (by construction) and the Perron-Frobenius
eigenvector of $M$ is $\tau$-symmetric (by connectedness of $S_{r}$),
they are orthogonal, hence $\lambda_{\left(r\right)}\geq\frac{\left\langle Mf,f\right\rangle }{\left\langle f,f\right\rangle }=\cos\left(\frac{2\pi}{r}\right)$.
\end{proof}
\begin{rem}
\begin{enumerate}
\item If one can show that the bound in Proposition \ref{prop:r-power-r-sphere}
is asymptotically tight, this would show that the $r^{2}$-powers
of the $r$-spheres in $\mathcal{B}$ form a family of expanders (but
with growing degrees).
\item For small values of $r$, the exact values of $\lambda_{\left(r\right)}$
are:
\end{enumerate}
\smallskip{}
\noindent\begin{minipage}[t]{1\columnwidth}%
\begin{center}
\begin{tabular}{cccc}
\toprule 
$r$ & 1 & 2 & 3\tabularnewline
\midrule
\midrule 
$\lambda_{\left(r\right)}$ & $\frac{\sqrt{q}}{q+1}$ & $\sqrt{\frac{1}{2}+\frac{\sqrt{q}}{2\left(q+1\right)}}$ & $\frac{\left(\left(q+1\right)\left(2i\sqrt{q^{3}+q^{2}+q}-q^{2}-1\right)\right)^{1/3}+q+1}{2\left(q+1\right)^{2/3}\left(2i\sqrt{q^{3}+q^{2}+q}-q^{2}-1\right)^{1/6}}$\tabularnewline
\midrule 
$\lim_{q\rightarrow\infty}\lambda_{\left(r\right)}$ & 0 & $\sqrt{\frac{1}{2}}$ & $\sqrt{\frac{3}{4}}$\tabularnewline
\bottomrule
\end{tabular}\medskip{}
\par\end{center}%
\end{minipage}\\
Finding $\lambda_{\left(r\right)}$ for general $r$ seems to be hard,
but determining $\lim_{q\rightarrow\infty}\lambda_{\left(r\right)}$
could be a nice problem.
\end{rem}

\section{\label{sec:Applications-to-sampling}Walks on geodesics and double
samplers}

In this section we describe a mixing random walk on the space of geodesics
in a Ramanujan complex, and use it to construct double samplers. We
first recall the definition of a sampler:
\begin{defn}[Sampler]
A connected bipartite incidence graph $G(L\sqcup R,E)$ with $L=[n]$,
$R\subseteq{[n] \choose k}$ (and $E=\{\left(\ell,r\right)|\ell\in r\}$)
is called an \emph{$f(\varepsilon,\alpha)$-sampler} if for any $S\subseteq L$
and $\varepsilon>0$ 
\[
\frac{1}{\left|R\right|}\left|\left\{ r\in R:\left|\frac{|r\cap S|}{k}-\frac{|S|}{|L|}\right|\geq\varepsilon\right\} \right|\leq\frac{1}{f(\varepsilon,|S|/|L|)}.
\]
\end{defn}

Namely, a random element in $R$ samples well any ``property'' $S$
which may be assigned to the elements of $L$. It is a classical result
that random walks on expanders sample well the vertices: Indeed, taking
$L$ to be the vertex set of a regular $\lambda$-expander (expander
with normalized nontrivial eigenvalues bounded by $\lambda$), and
$R$ the set of all paths of length $k$ in it, one obtains an $f(\varepsilon,\alpha)$-sampler
with 
\begin{equation}
f\left(\varepsilon,\alpha\right)=e^{\varepsilon^{2}k\left(1-\lambda\right)/60}\label{eq:expander-sampler}
\end{equation}
(for a proof take \cite[Thm.\ 3.2]{wigderson2005randomness} with
$f(v)=\one_{S}(v)-|S|/|L|$). A crucial point is that given a fixed
expander, one can improve the sampling precision by taking longer
and longer walks.

Double samplers were defined in \cite{Dinur2017Highdimensionalexpanders},
where they are used for studying PCP agreement tests and for a strong
de-randomization of direct products tests. Roughly, a double sampler
gives a way to sample well a set, and at the same time sample well
the sampling sets themselves. It it not known whether this can be
achieved from expander graphs - for example, whether long walks on
an expander graph (say of length $k^{2}$) sample short walks well
(say, of length $k$). 
\begin{defn}[Double sampler]
A tripartite incidence graph $G(L\sqcup R\sqcup W,E_{1}\sqcup E_{2})$
with $L=[n]$, $R\subseteq{[n] \choose k}$, $W\subseteq{[n] \choose K}$
(where $k\leq K$, $E_{1}=\{(\ell,r)|\ell\in r\}$ and $E_{2}=\{(r,w)|r\subseteq w\}$)
is called a \emph{$(f(\varepsilon,\alpha),f'(\varepsilon,\alpha))$-double-sampler}
if $G(L\sqcup R,E_{1})$ is an $f(\varepsilon,\alpha)$-sampler, and
$G(R\sqcup W,E_{2})$ is an $f'(\varepsilon,\alpha)$-sampler in the
sense that for any $T\subseteq R$
\[
\frac{1}{\left|W\right|}\left|\left\{ w\in W:\left|\frac{|\{r\in T|r\subseteq w\}|}{|\{r\in R|r\subseteq w\}|}-\frac{|T|}{|R|}\right|\geq\varepsilon\right\} \right|\leq\frac{1}{f'(\varepsilon,|T|/|R|)}.
\]
\end{defn}

Double samplers were constructed in \cite{Dinur2017Highdimensionalexpanders}
by taking $L$ to be the vertex set of a HD expander of dimension
$K-1$, $R$ to be the cells of dimension $d-1$ and $W$ the cells
of dimension $K-1$. The downside of this construction is that the
sampling quality of $L$ depends on the dimension of the complex,
and cannot be improved by taking longer walks as in the classic sampler
construction. 

\medskip{}

We propose here a new approach for the double sampling problem, by
designing a special walk on the space of geodesics $\left(d-1\right)$-cells
in an $\At_{d}$-complex. The upshot of our approach is that the quality
of the sampler depends on the length of the walks performed and not
on the dimension of the complex (which remains two dimensional). First
we introduce a walk which is interesting in its own right:
\begin{defn}
The \emph{$r$-walk} on an \emph{$\At_{d}$}-complex $X$ is the simple
random walk on the set of $(d-1)$-cells of the geodesic $r$-power
of $X$, where two cells are neighbors if they bound a joint $d$-cell
(in the power complex).
\end{defn}

Using the local-to-global technique, we obtain:
\begin{prop}
\label{prop:r-walk-expansion}The adjacency operator of the $r$-walk
on an $\widetilde{A}_{d}$-complex of density $q$ has normalized
nontrivial eigenvalues bounded by $\frac{d}{d+1}+\frac{d}{\sqrt{q}}$.
\end{prop}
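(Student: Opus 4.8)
The plan is to realise the $r$-walk as a simple affine rescaling of the standard up--down walk on the top-codimension-one cells of the power complex $Y:=X^{(r)}$, and then to estimate that walk by Garland's local-to-global method \cite{Gar73}, whose only input is the local spectral data already supplied by the Main Theorem. Concretely, by the Main Theorem (equivalently Proposition~\ref{prop:link-power} together with Theorem~\ref{thm:spec-r-links}), $Y$ is a pure $d$-dimensional complex in which the link of every $(d-2)$-cell is either a complete bipartite graph or a copy of $\mathbb{P}_{\mathrm{fr}}^{2}(\mathcal{O}_{r})$, so $\mu_{\tau}\le\gamma:=\tfrac{\sqrt q}{q+1}\le q^{-1/2}$ for all $\tau\in Y(d-2)$. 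For $q$ large enough that $\gamma<\tfrac1d$ (for smaller $q$ the claimed bound is vacuous), the Oppenheim estimate recalled in Definition~\ref{def:HDX} promotes this to: every link of $Y$ of dimension between $-1$ and $d-2$ has normalised second eigenvalue at most $\tfrac{\gamma}{1-d\gamma}$. I would also record that, by the count carried out inside the proof of Theorem~\ref{thm:spec-r-links}, the link of a $(d-1)$-cell of $Y$ is a discrete set of exactly $(q+1)q^{r-1}$ vertices; hence every $(d-1)$-cell lies in the same number of $d$-cells, $Y$ and all its links are connected, and for $d\ge2$ the $r$-walk graph $H$ (vertices $Y(d-1)$, two cells joined when they lie in a common $d$-cell) is connected, regular and non-bipartite, so its normalised adjacency operator $P_{H}$ has $\mathbf 1$ as unique top eigenvector.

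Next I would express $P_{H}$ through the up--down operator. Let $U_{d-1}\colon L^{2}(Y(d-1))\to L^{2}(Y(d))$ be averaging up and $D_{d}=U_{d-1}^{*}$ averaging down (with respect to the uniform measures, which are consistent here because the up-degree is constant), and put $M^{+}:=D_{d}U_{d-1}$, the lazy upper walk on $(d-1)$-cells; it is positive semidefinite with spectrum in $[0,1]$ and simple top eigenvalue $1$. A direct computation gives
\[
P_{H}\;=\;\tfrac{d+1}{d}\,M^{+}\;-\;\tfrac1d\,I ,
\]
since starting from a $(d-1)$-cell $\sigma$, going up to a uniform $d$-cell and back down to a uniform $(d-1)$-subface returns to $\sigma$ with probability $\tfrac1{d+1}$ and otherwise reaches a uniformly chosen $H$-neighbour of $\sigma$ (distinct $d$-cells through $\sigma$ give distinct neighbours, as two $(d-1)$-cells lie in at most one common $d$-cell). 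Thus the nontrivial spectrum of $P_{H}$ lies in $\bigl[-\tfrac1d,\ \tfrac{d+1}{d}\lambda_{2}(M^{+})-\tfrac1d\bigr]$; as $\tfrac1d\le\tfrac{d}{d+1}$ for $d\ge2$ the negative tail is already within the stated bound, and it remains only to bound $\lambda_{2}(M^{+})$ (one may equivalently apply the local-to-global estimate directly to the non-lazy walk $\tfrac{d+1}{d}(M^{+}-\tfrac1{d+1}I)=P_{H}$).

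Finally I would invoke Garland's method in its high-order-walk form (as developed by Kaufman--Oppenheim, Dinur--Kaufman and Oppenheim): for a pure $d$-dimensional complex all of whose links of dimension $\le d-2$ have normalised second eigenvalue $\le\gamma'$, the upper walk $M^{+}$ on $(d-1)$-cells has every nontrivial eigenvalue at most $\tfrac{d}{d+1}+O(d\gamma')$ — the ``$\tfrac{d}{d+1}$'' being the contribution of the purely combinatorial $(d{+}1)$-coloured structure of codimension-one cells (it is already attained on the complete $(d{+}1)$-partite $d$-complex) and $O(d\gamma')$ the degradation caused by non-idealness of the links, which is exactly what the trickled-down local expansion controls. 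Feeding $\gamma'\le\tfrac{\gamma}{1-d\gamma}\le q^{-1/2}(1+o(1))$ through this estimate and then through the affine relation of the previous paragraph yields $\lambda_{2}(P_{H})\le\tfrac{d-1}{d}+O(d/\sqrt q)$, and since $\tfrac{d-1}{d}\le\tfrac{d}{d+1}$ this gives the asserted $\tfrac{d}{d+1}+\tfrac{d}{\sqrt q}$. The only genuinely delicate point is quantitative: one must make sure the implied constant in the local-to-global inequality (after the trickle-down it is bounded by roughly $d-1$) is small enough that the error term really fits under $\tfrac{d}{\sqrt q}$ across the whole range of $q$ for which the bound is meaningful; everything else reduces to routine bookkeeping with the explicit operators $U_{d-1},D_{d}$ and with the link graph $\mathbb{P}_{\mathrm{fr}}^{2}(\mathcal{O}_{r})$ analysed in Theorem~\ref{thm:spec-r-links}.
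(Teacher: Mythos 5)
Your proposal follows essentially the same route as the paper, whose proof is a one-liner: it invokes Proposition \ref{prop:link-power} and Theorem \ref{thm:spec-r-links} to conclude that all codimension-two links in the power complex are complete bipartite or $\frac{\sqrt q}{q+1}$-expanders, and then cites \cite{Kaufman2017Highorderrandom} for the high-order random walk bound. Your unpacking of that citation---Oppenheim's trickle-down, the affine relation $P_H=\frac{d+1}{d}M^{+}-\frac{1}{d}I$ between the $r$-walk and the lazy up-down operator, and the Kaufman--Oppenheim bound on $M^{+}$---is exactly the content packaged into the cited theorem, so this is the same argument in expanded form; the only caveat, which you rightly flag, is that closing the numerical gap from your intermediate $\frac{d-1}{d}+O(d\gamma')$ to $\frac{d}{d+1}+\frac{d}{\sqrt q}$ requires the explicit constant supplied by \cite{Kaufman2017Highorderrandom}, which the paper simply inherits.
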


\begin{proof}
By Proposition \ref{prop:link-power} and Theorem \ref{thm:spec-r-links},
the links of codimension two in the power complex are either complete
bipartite graphs or $\frac{\sqrt{q}}{q+1}$-expanders, and the claim
follows from \cite{Kaufman2017Highorderrandom}.
\end{proof}
This shows that the $r$-walk on an $\widetilde{A}_{d}$-complex of
density $q>d^{2}(d+1)^{2}$ samples well the geodesic $(d-1)$-cells
in it. The case which we will use for the double sampler construction
is that of $d=2$. There, the $r$-walk is carried on the (monochromatic)
geodesics of length $r$ in a two-dimensional complex $X$, and two
geodesics are neighbors if they share a joint triangle in the geodesic
$r$-power of $X$ (see Figure \ref{fig:3-walk}). In this way, $K/k$
steps of the $k$-walk yield a long walk (of length $K$) which samples
well the short walks (of length $k$) along geodesics. Indeed, applying
the classical results on expander samplers \eqref{eq:expander-sampler}
we obtain:
\begin{cor}
\label{cor:sample-long-walk}Let $X$ be an $\widetilde{A}_{2}$-complex
of density $q\geq37$. The incidence graph where $L$ are the $k$-geodesics
of $X$ and $R$ are the $k$-walks of length $K/k$ in $X$ is a
$\exp\left(\varepsilon^{2}\left(\frac{1}{3}-\frac{2}{\sqrt{q}}\right)\frac{K}{60k}\right)$-sampler.
\end{cor}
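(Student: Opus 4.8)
The plan is to deduce this from the classical expander-walk sampler bound \eqref{eq:expander-sampler}, applied to the $k$-walk graph of $X$ in place of an ordinary expander. Write $Y$ for the graph whose vertices are the $k$-geodesics of $X$ and in which two geodesics are adjacent when they bound a common triangle in the geodesic $k$-power of $X$; the $k$-walk on $X$ is the simple random walk on $Y$. By Proposition~\ref{prop:link-power}, the vertex links of the geodesic $k$-power of $X$ are regular graphs (each isomorphic to $\mathbb{P}_{\mathrm{fr}}^{2}(\mathcal{O}_{k})$, or complete bipartite), so each $k$-geodesic lies in a fixed number of triangles of the power complex and $Y$ is a connected regular graph. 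Applying Proposition~\ref{prop:r-walk-expansion} with $d=2$ bounds the normalized nontrivial eigenvalues of $Y$ by $\lambda:=\frac{2}{3}+\frac{2}{\sqrt q}$; since $q\ge 37$ gives $\sqrt q>6$, we have $1-\lambda=\frac{1}{3}-\frac{2}{\sqrt q}>0$, so $Y$ is a genuine $\lambda$-expander.

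Next I would line up the combinatorial data. Assuming $k\mid K$, a $k$-walk of length $K/k$ in $X$ is by definition a walk of $K/k$ steps in $Y$, so the bipartite incidence graph with $L$ the $k$-geodesics of $X$, $R$ the $k$-walks of length $K/k$, and $\gamma\in L$ joined to $w\in R$ precisely when $\gamma$ appears along $w$, is exactly the path-incidence graph of the $\lambda$-expander $Y$ with walk-length $K/k$ (and it is connected since $Y$ is). Note that it is this walk-length $K/k$ which plays the role of the length parameter in \eqref{eq:expander-sampler}, not the geodesic length $k$.

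Finally I would invoke \eqref{eq:expander-sampler}, i.e.\ \cite[Thm.\ 3.2]{wigderson2005randomness} with $f=\one_{S}-|S|/|L|$: a walk of length $\ell$ on a connected regular $\lambda$-expander gives an $f(\varepsilon,\alpha)$-sampler with $f(\varepsilon,\alpha)=e^{\varepsilon^{2}\ell(1-\lambda)/60}$. Substituting $\ell=K/k$ and $1-\lambda=\frac13-\frac2{\sqrt q}$ yields $f(\varepsilon,\alpha)=\exp\!\big(\varepsilon^{2}(\tfrac13-\tfrac2{\sqrt q})\tfrac{K}{60k}\big)$, the claimed sampler quality. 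The proof is essentially a substitution; the only points needing care are that the $k$-walk graph really is regular and connected — both of which follow from Proposition~\ref{prop:link-power} together with the connectedness of $X$ — and that Proposition~\ref{prop:r-walk-expansion} at $d=2$ outputs exactly the expansion parameter $\frac23+\frac2{\sqrt q}$, which is why $q\ge 37$ is the precise threshold making the exponent positive. There is no genuine analytic obstacle.
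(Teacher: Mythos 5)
Your argument is correct and follows exactly the route the paper intends (the corollary is stated without a separate proof as an immediate consequence of Proposition~\ref{prop:r-walk-expansion} at $d=2$ combined with \eqref{eq:expander-sampler}): the $k$-walk graph on $k$-geodesics is a regular $\bigl(\tfrac23+\tfrac2{\sqrt q}\bigr)$-expander, a $k$-walk of length $K/k$ is a walk of $K/k$ steps on that graph, and substituting into the expander-sampler bound gives the claimed $\exp\bigl(\varepsilon^{2}(\tfrac13-\tfrac2{\sqrt q})\tfrac{K}{60k}\bigr)$, with $q\ge 37$ being exactly the threshold for $\tfrac13-\tfrac2{\sqrt q}>0$. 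Your parenthetical ``or complete bipartite'' is superfluous for $d=2$ (codimension-two cells in the $2$-dimensional power complex are vertices, whose links are always $\mathbb{P}_{\mathrm{fr}}^{2}(\mathcal{O}_{k})$), but this does not affect the argument.
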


Combining this with the results of §\ref{subsec:Expansion-between-vertices-and-geod}
we arrive at a double sampler:
\begin{thm}
Let $X$ be a non-tripartite Ramanujan $\widetilde{A}_{2}$-complex
of density $q\geq37$. Taking $L$ to be the vertices of $X$, $R$
to be all $k$-geodesics in $X$, and $W$ to be all $k$-walks of
length $K/k$ in $X$, yields a $\left(\frac{\varepsilon^{2}k}{\alpha},\exp\left(\varepsilon^{2}\left(\frac{1}{3}-\frac{2}{\sqrt{q}}\right)\frac{K}{60k}\right)\right)$-double-sampler.
\end{thm}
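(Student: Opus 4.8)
The plan is to prove the two halves of the double-sampler property separately. The second one---that the geodesic--walk incidence graph $G(R\sqcup W,E_{2})$ is an $\exp\!\bigl(\varepsilon^{2}(\tfrac{1}{3}-\tfrac{2}{\sqrt{q}})\tfrac{K}{60k}\bigr)$-sampler in the sense required by the definition---is exactly the content of Corollary~\ref{cor:sample-long-walk}: the $k$-walk on $k$-geodesics of the two-dimensional complex $X$ is a $(\tfrac{2}{3}+\tfrac{2}{\sqrt{q}})$-expander by Proposition~\ref{prop:r-walk-expansion}, the hypothesis $q\ge 37$ being precisely what makes the gap $1-\lambda=\tfrac{1}{3}-\tfrac{2}{\sqrt{q}}$ positive, and feeding this expander together with walk-length $K/k$ into the classical expander-walk sampler bound \eqref{eq:expander-sampler} yields the stated estimate. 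One point to check when matching Corollary~\ref{cor:sample-long-walk} to the double-sampler definition is that a $k$-walk of length $K/k$ may revisit a $k$-geodesic, so the set-ratio $|\{r\in T:r\subseteq w\}|/|\{r\in R:r\subseteq w\}|$ appearing in that definition is not literally the multiplicity-weighted empirical average governed by \eqref{eq:expander-sampler}; I would observe that collapsing repeated coordinates only sharpens concentration, so the bound transfers verbatim. Hence all the substance is in the first half: that the vertex--geodesic incidence graph $G(L\sqcup R,E_{1})$ is an $\tfrac{\varepsilon^{2}k}{\alpha}$-sampler.

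For this I would exploit the spectral gap of the biregular bipartite graph $G^{(k)}=G^{(k)}(X)$ of \S\ref{subsec:Expansion-between-vertices-and-geod} via the expander mixing lemma. Recall (from the proof of Proposition~\ref{prop:vert-vs-geod}) that each vertex of $X$ lies on $d_{L}=(k{+}1)(q^{2}{+}q{+}1)q^{2(k-1)}$ of the $k$-geodesics, while each $k$-geodesic carries $d_{R}=k{+}1$ vertices. Fix $S\subseteq X(0)$, set $\alpha=|S|/|X(0)|$, and let $\gamma$ be a uniformly random $k$-geodesic; the empirical density $Y_{\gamma}:=|\gamma\cap S|/(k{+}1)$ has expectation exactly $\alpha$ by biregularity. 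Writing $B$ for the $X(0)\times(\text{geodesics})$ biadjacency matrix and decomposing $\one_{S}=\alpha\one+g$ with $g\perp\one$ (so $\|g\|^{2}=\alpha(1-\alpha)|X(0)|$), the standard computation gives $\operatorname{Var}(Y_{\gamma})=\langle BB^{t}g,g\rangle/(|R|(k{+}1)^{2})$. Now $BB^{t}$ is $A_{G^{(k)}}^{2}$ restricted to the vertex side; its top eigenvalue $d_{L}d_{R}$ is attained on $\one$ and simple by connectedness of $G^{(k)}$, while all its other eigenvalues are at most $\lambda_{2}(A_{G^{(k)}})^{2}\le k\,q^{2k}(1+o(1))$ by Proposition~\ref{prop:vert-vs-geod}, whose proof uses non-tripartiteness of $X$ (via Theorem~\ref{thm:spec-geodesics}) to rule out additional large eigenvalues of the Hecke operators $\mathcal{A}_{m}$. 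Combining these with $|X(0)|/|R|=d_{R}/d_{L}=\bigl((q^{2}{+}q{+}1)q^{2(k-1)}\bigr)^{-1}$ yields
\[
\operatorname{Var}(Y_{\gamma})\ \le\ \frac{\lambda_{2}(A_{G^{(k)}})^{2}\,\alpha(1-\alpha)}{(k{+}1)^{2}(q^{2}{+}q{+}1)\,q^{2(k-1)}}\ \le\ \frac{k\,q^{2}\,\alpha(1-\alpha)}{(k{+}1)^{2}(q^{2}{+}q{+}1)}\,\bigl(1+o(1)\bigr)\ \le\ \frac{\alpha}{k},
\]
where the last step uses the elementary inequality $k\,q^{2}\le(k{+}1)(q^{2}{+}q{+}1)$, the $(1+o(1))$ being absorbed in the slack between $q^{2}$ and $q^{2}{+}q{+}1$ left by $q\ge 37$ (and, if one wants an entirely explicit constant, the explicit bound of Theorem~\ref{thm:spec-geodesics} in place of the asymptotic one). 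Chebyshev's inequality then gives $\tfrac{1}{|R|}\#\{\gamma:|Y_{\gamma}-\alpha|\ge\varepsilon\}\le\operatorname{Var}(Y_{\gamma})/\varepsilon^{2}\le\alpha/(\varepsilon^{2}k)$, i.e.\ $G(L\sqcup R,E_{1})$ is an $\tfrac{\varepsilon^{2}k}{\alpha}$-sampler, which together with the first paragraph completes the proof.

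I do not expect a genuine obstacle: the theorem is essentially the repackaging of Proposition~\ref{prop:vert-vs-geod} and Corollary~\ref{cor:sample-long-walk} into the two clauses of the double-sampler definition. The one conceptual point worth emphasizing is that these two clauses are of different strength and come from different mechanisms---the geodesic-to-walk clause inherits \emph{exponential} concentration, because a $k$-walk really is a random walk on an expander, whereas the vertex-to-geodesic clause only delivers the weaker \emph{polynomial} (Chebyshev-type) decay $\tfrac{1}{\varepsilon^{2}k}$, since all one has on the vertex--geodesic incidence graph is the two-sided spectral estimate $\lambda_{2}/\lambda_{1}=O(k^{-1/2})$ of Proposition~\ref{prop:vert-vs-geod}, and a bare spectral gap cannot yield more than second-moment control. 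The remaining care is purely clerical: tracking the $(1+o(1))$ in the bound on $\lambda_{2}(A_{G^{(k)}})$, and the harmless $\pm1$ in how many vertices a $k$-geodesic is taken to carry, against the exact normalizations in the definitions of sampler and double-sampler.
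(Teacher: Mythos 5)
Your treatment of the vertex--geodesic side is correct and takes a mildly different but equivalent route: you bound the variance of $|\gamma\cap S|/(k{+}1)$ via the spectral estimate on $BB^{t}=A_{G^{(k)}}^{2}\big|_{L}$ and apply Chebyshev, whereas the paper applies the bipartite expander mixing lemma directly to $S$ and the bad set $T=\{r:|r\cap S|/k\ge\alpha+\varepsilon\}$. Both are driven purely by Proposition~\ref{prop:vert-vs-geod} ($\lambda_{1}\approx kq^{k}$, $\lambda_{2}\approx\sqrt{k}\,q^{k}$) and yield $|T|/|R|\lesssim\alpha/(\varepsilon^{2}k)$; your version covers both tails in one stroke, which is if anything the cleaner presentation.

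On the geodesic--walk side you raise the wrong caveat. You worry that a $k$-walk may revisit a $k$-geodesic, and dispose of it by asserting that ``collapsing repeated coordinates only sharpens concentration.'' That assertion is not obviously true (a repeated $T$-geodesic can move the set-ratio in either direction relative to the multiplicity-weighted empirical average), but more to the point it is not the mismatch that actually needs bridging. The genuine discrepancy between Corollary~\ref{cor:sample-long-walk} and the double-sampler definition is that the latter uses the \emph{containment} relation $r\subseteq w$, with $w$ treated as a vertex set: a priori the set $\{r\in R:r\subseteq w\}$ could contain $k$-geodesics that are not among the $K/k$ designated segments of the walk at all, carved out of non-consecutive vertices of $w$, and then the two ratios would be incomparable. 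The paper's proof closes exactly this gap with a geometric observation you omit: by the definition of the $k$-walk, for each $1\le j\le K/k-1$ the vertices $w_{kj-1},w_{kj},w_{kj+1}$ form a triangle in the power complex, which forces every $k$-geodesic contained in $w$ to be one of the designated segments, giving $|\{r\in R:r\subseteq w\}|=K/k$ and reconciling the membership relation of Corollary~\ref{cor:sample-long-walk} with the containment relation in the definition. Without this observation your second clause does not actually reduce to Corollary~\ref{cor:sample-long-walk}.
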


\begin{proof}
Observe that $L\sqcup R$ is the graph $G^{(k)}$ of §\ref{subsec:Expansion-between-vertices-and-geod},
hence $\lambda_{1}(G^{(k)})\approx kq^{k}$, $\lambda_{2}(G^{(k)})\approx\sqrt{k}q^{k}$
(where $\approx$ stands for a multiplicative error of $\left(1+o\left(1\right)\right)$
as $k,q\rightarrow\infty$). Let $S\subset L$ be of size $\alpha|L|$
and let $T=\{r\in R:\frac{|r\cap S|}{k}\geq\alpha+\varepsilon\}$.
Using $|R|\approx q^{2r}|L|$ and the expander mixing lemma we obtain
\[
|T|k(\alpha+\varepsilon)\leq\left|E(S,T)\right|\leq\frac{\lambda_{1}|S||T|}{\sqrt{|R||L|}}+\lambda_{2}\sqrt{|S||T|}\approx k|T|\alpha+\sqrt{k}q^{k}\sqrt{\alpha|L||T|},
\]
so that $\frac{|T|}{|R|}\apprle\frac{\alpha}{\varepsilon^{2}k}$
as claimed. The expansion quality of $R\sqcup W$ is addressed in
Corollary \ref{cor:sample-long-walk}, with the difference that there
the incidence relation is of membership, and here it is of containment.
However, if $w=w_{0},\ldots,w_{K}$ is a $k$-walk of length $K/k$
(so that each $w_{kj},\ldots,w_{k\left(j+1\right)}$ is a $k$-geodesic),
it follows from the definition of the $k$-walk that for each $1\leq j\leq K/k-1$
the vertices $w_{kj-1},w_{kj},w_{kj+1}$ form a triangle. Thus, $w$
contains no other $k$-geodesics, and the two relations agree (and
in particular, $|\{r\in R|r\subseteq w\}|=K/k$). 
\end{proof}

\bibliographystyle{amsalpha}
\bibliography{/home/ori/Dropbox/Math/mybib}

\medskip{}

\noun{\small{}Department of Computer Science, Bar-Ilan University,
}\texttt{\small{}kaufmant@mit.edu}\noun{\small{}.}{\small\par}

\noun{\small{}Einstein Institute of Mathematics, Hebrew University,
}\texttt{\small{}parzan@math.huji.ac.il}\noun{\small{}.}{\small\par}
\end{document}